\documentclass[11pt, twoside, letter]{article}
\usepackage[margin = 1in]{geometry}
\usepackage{array}
% Recommended, but optional, packages for figures and better typesetting:
\usepackage{subcaption}
\usepackage{amsthm}
\usepackage{amsmath}
\usepackage{amsfonts}
\usepackage{microtype}
\usepackage{graphicx}
\usepackage{comment}
\usepackage{booktabs} % for professional tables
\newtheorem{theorem}{Theorem}[section]
\newtheorem{lemma}[theorem]{Lemma}
\newtheorem{proposition}[theorem]{Proposition}
\newtheorem{corollary}[theorem]{Corollary}
\theoremstyle{definition}
\newtheorem{definition}[theorem]{Definition}

\usepackage{color}
\usepackage{graphicx}
\usepackage{pgfplots}

% hyperref makes hyperlinks in the resulting PDF.
% If your build breaks (sometimes temporarily if a hyperlink spans a page)
% please comment out the following usepackage line and replace
% \usepackage{icml2019} with \usepackage[nohyperref]{icml2019} above.
\usepackage{hyperref}

% Attempt to make hyperref and algorithmic work together better:

% Use the following line for the initial blind version submitted for review:

%\usepackage{icml2019}
\theoremstyle{remark}
\newtheorem{remark}[theorem]{Remark}

 % No op here. Customize it for different styles.

%\newcommand*{\email}[1]{\texttt{#1}}

%\numberwithin{equation}{section}

%    Absolute value notation
\newcommand{\diag}{\mathop{\mathbf{diag}}}
\newcommand{\abs}[1]{\ensuremath{\left|#1\right|}}
\newcommand{\norm}[2][]{\ensuremath{\left\Vert #2 \right\Vert_{#1}}}

\renewcommand{\vec}[1]{\mathbf{#1}}

% If accepted, instead use the following line for the camera-ready submission:
%\usepackage[accepted]{icml2019}

% The \icmltitle you define below is probably too long as a header.
% Therefore, a short form for the running title is supplied here:
%\icmltitlerunning{Multiplicative Weights Update as a distributed constrained optimization algorithm}
\begin{document}

\title{First-order methods Almost Always Avoid Saddle points: The case of Vanishing Step-sizes}

\author{Ioannis Panageas\\SUTD\\ioannis@sutd.edu.sg
\and Georgios Piliouras\\SUTD\\georgios@sutd.edu.sg
\and Xiao Wang\\SUTD\\xiao\_wang@sutd.edu.sg
}

%% *****************************************
%% *****************************************
% NOTE! Affiliations placed here should be for the institution where the
%       BULK of the research was done. If the author has gone to a new
%       institution, before publication, the (above) affiliation should NOT be changed.
%       The authors 'current' address may be given in the "Author's addresses:" block (below).
%       So for example, Mr. Abdelzaher, the bulk of the research was done at UIUC, and he is
%       currently affiliated with NASA.

\date{}
\maketitle

\begin{abstract}
In a series of papers \cite{LSJR16, PP17, LPP}, it was established that some of the most commonly used first order methods almost surely (under random initializations) and with step-size being small enough, avoid strict saddle points, as long as the objective function $f$ is $C^2$ and has Lipschitz gradient.  The key observation was that first order methods can be studied from a dynamical systems perspective, in which instantiations of Center-Stable manifold theorem allow for a global analysis. The results of the aforementioned papers were limited to the case where the step-size $\alpha$ is constant, i.e., does not depend on time (and bounded from the inverse of the Lipschitz constant of the gradient of $f$). It remains an open question whether or not the results still hold when the step-size is time dependent and vanishes with time.

In this paper, we resolve this question on the affirmative for gradient descent, mirror descent, manifold descent and proximal point. The main technical challenge is that the induced (from each first order method) dynamical system is time non-homogeneous and the stable manifold theorem is not applicable in its classic form. By exploiting the dynamical systems structure of the aforementioned first order methods, we are able to prove a stable manifold theorem that is applicable to time non-homogeneous dynamical systems and generalize the results in \cite{LPP} for vanishing step-sizes.
\end{abstract}

\section{Introduction}
Non-convex optimization has been studied extensively the last years and has been one of the main focuses of Machine Learning community. The reason behind the interest of ML community is that in many applications of interest, one has to deal with the optimization of a non-convex landscape. One of the key obstacles of non-convex optimization is the saddle points (which can outnumber the local minima \cite{dauphin2014identifying,pascanu2014saddle,choromanska2014loss}) and avoiding them is a fundamental problem \cite{MJ}.

Recent progress \cite{ge2015escaping, LPP} has shown that under mild regularity assumptions on the objective function, first-order methods such as gradient descent can provably avoid the so-called \textit{strict} saddle points\footnote{These are saddle points where the Hessian of the objective admits at least one direction of negative curvature. Such property has been shown to hold in a wide range of objective functions, see \cite{ge2015escaping, sun2015complete2,sun2015complete1, ge2016matrix,ge2017no,bhojanapalli2016global} and references therein.}.
%\item Stochastic and deterministic methods to avoid saddle points.
%\item Step size is important, constant stepsize results:... why vanishing stepsize?

In particular, a unified theoretical framework is established in \cite{LPP} to analyze the asymptotic behavior of first-order optimization algorithms such as gradient descent, mirror descent, proximal point, coordinate descent and manifold descent. It is shown that under random initialization, the aforementioned methods avoid strict saddle points almost surely. The proof exploits a powerful theorem from the dynamical systems literature, the so-called Stable-manifold theorem (see supplementary material for a statement of this theorem). For example, given a $C^2$ (twice continuously differentiable function) $f$ with $L$-Lipschitz gradient, gradient descent method \[\vec{x}_{k+1} = g(\vec{x}_k) := \vec{x}_k - \alpha \nabla f(\vec{x}_k)\] avoids strict saddle points almost surely, under the assumption that the stepsize is \textit{constant} and $0<\alpha<\frac{1}{L}$. The crux of the proof in \cite{LPP} is the use Stable-manifold theorem for the \textit{time-homogeneous}\footnote{This means that $g$ does not depend on time.} dynamical system $\vec{x}_{k+1} = g(\vec{x}_k)$. Stable-manifold theorem implies that the dynamical system $g$ avoids its unstable fixed points and with the fact that the unstable fixed points of the dynamical system $g$ coincide with the strict saddles of $f$ the claim follows.

In many applications/algorithms however the stepsize is adaptive or vanishing/diminishing (meaning $\lim_{k} \alpha_k = 0,$ e.g., $\alpha_k = \frac{1}{k}$ or $\frac{1}{\sqrt{k}}$). Such applications include stochastic gradient descent (see \cite{understand} for analysis of SGD for convex functions), urn models and stochastic approximation \cite{pemantle1990nonconvergence},  gradient descent \cite{bubeck14}, online learning algorithms like multiplicative weights update \cite{Arora1} (which is an instantiation of Mirror Descent with entropy regularizer).  It is also important to note that the choice of the stepsize is really crucial in the aforementioned applications as changing the stepsize can change the convergence properties (transition from convergence to oscillations/chaos \cite{nar18, PPP17}) and/or the rate of convergence \cite{nar18}.

The proof in \cite{LPP} does not carry over when the stepsize depends on time step, because the Stable-manifold theorem is not applicable.
Yet it remains an open question whether the results of \cite{LPP} hold for vanishing step-sizes and the authors stated in \cite{LPP} as an open question. This work resolves this question in the affirmative. Our main result is stated below informally.

%\paragraph*{Our Results.}
\begin{theorem}[Informal]\label{Main:Informal}
Gradient Descent, Mirror Descent, Proximal point and Manifold descent, with vanishing step-size $\alpha_k$ of order $\Omega\left(\frac{1}{k}\right)$ avoid the set of strict saddle points (isolated and non-isolated) almost surely under random initialization.
\end{theorem}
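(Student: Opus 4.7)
The plan is to establish a non-autonomous analog of the stable manifold theorem tailored to the time-varying iterations induced by first-order methods with vanishing step-size, and then to reproduce the covering/pullback argument of \cite{LPP} on top of it. I will describe the approach for gradient descent $\vec{x}_{k+1} = g_k(\vec{x}_k) := \vec{x}_k - \alpha_k \nabla f(\vec{x}_k)$; the other three methods fit the same template modulo the identifications (mirror map, proximal operator, Riemannian exponential) already used in \cite{LPP}.

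First I would set up the local picture at an arbitrary strict saddle $x^*$. Since $\nabla f(x^*)=0$, $x^*$ is a common fixed point of every $g_k$, and $Dg_k(x^*) = I - \alpha_k H$ with $H = \nabla^2 f(x^*)$. Because $H$ has at least one negative eigenvalue $\lambda<0$, each $Dg_k(x^*)$ has an eigenvalue $1-\alpha_k\lambda>1$, and the cumulative expansion along the unstable direction up to step $k$ behaves like $\prod_{j \le k}(1-\alpha_j\lambda)$. The hypothesis $\alpha_k = \Omega(1/k)$ gives $\sum \alpha_j = \infty$, so this product diverges: the unstable direction is genuinely expanding in a cocycle sense, even though the per-step hyperbolicity $\alpha_k|\lambda|$ shrinks to $0$.

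Second, I would prove the following non-autonomous center-stable manifold statement: there exist a neighborhood $U$ of $x^*$, an index $k_0$, and a $C^1$ graph $W^{cs}\subseteq U$ of dimension equal to $\dim E^{cs}(H)$ (the center-stable subspace), such that any forward trajectory $(\vec{x}_k)_{k\ge k_0}$ of the time-varying system that stays in $U$ and converges to $x^*$ must lie on $W^{cs}$. The construction I have in mind is the graph transform: write points as $(y_s,y_u)$ in the splitting $E^{cs}\oplus E^u$, parametrize candidate invariant manifolds as graphs $y_u=\varphi(y_s)$ over a small ball, and show that the induced action of the family $\{g_k\}$ on such graphs is a contraction on a suitable $\mathrm{Lip}$-function space. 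The key estimate is that the linear part contracts graph slopes by a factor $(1-\alpha_k\lambda)^{-1}(1+\alpha_k\mu) \le 1-c\alpha_k$ for some $c>0$ depending on the spectral gap of $H$, while the quadratic remainder introduced by $\nabla f$ (via its Lipschitz Hessian on $U$) contributes an error of order $\alpha_k\cdot\mathrm{diam}(U)$. Choosing $U$ small enough and $k_0$ large enough, one gets a product-type contraction with log-rate $\sum_{k\ge k_0}c\alpha_k = \infty$, which suffices to make the graph transform converge even though each individual step is only mildly contractive.

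Third, I would globalize via a Lindelöf/pullback argument identical in structure to \cite{LPP}. For $k$ large enough that $\alpha_k < 1/L$, each $g_k$ is a diffeomorphism (analogous statements for mirror/proximal/manifold descent are in \cite{LPP}), so finite compositions $G_k := g_{k-1}\circ\cdots\circ g_0$ send Lebesgue null sets to null sets. The set of initial conditions whose trajectory converges to a strict saddle is contained in $\bigcup_{k\ge 0} G_k^{-1}\bigl(\bigcup_{x^*}W^{cs}(x^*)\bigr)$. Each $W^{cs}(x^*)$ has codimension $\ge 1$, hence measure zero, and a Lindelöf cover reduces the union over strict saddles (possibly uncountable, in the non-isolated case) to a countable one, exactly as in \cite{LPP}. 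Countable unions of null sets are null, giving the claim.

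The main obstacle is step two: the classical stable manifold theorem hinges on a time-uniform hyperbolicity estimate, whereas here the per-step hyperbolicity $\alpha_k|\lambda|$ decays to zero. The technically delicate piece is to prove the graph transform contracts uniformly in $k$ by amortizing hyperbolicity across many steps, and to do so while controlling the nonlinear remainder. This is exactly where the rate assumption $\alpha_k=\Omega(1/k)$ enters: it is the slowest decay for which $\sum \alpha_k$ still diverges, so that the aggregated unstable expansion dominates the accumulated quadratic error and the invariant graph exists. A strictly faster decay (e.g. $\alpha_k = 1/k^{1+\epsilon}$) would render $\sum\alpha_k<\infty$, the contraction rate would degenerate, and the present proof strategy would break down.
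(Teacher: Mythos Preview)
Your outline is sound and would succeed, but the mechanism you propose for step two differs from the paper's. You invoke the Hadamard \emph{graph transform}: parametrize candidate center--stable manifolds as Lipschitz graphs $y_u=\varphi(y_s)$, pull back under the $g_k$, and exploit the cumulative slope contraction $\prod_k(1-c\alpha_k)\to 0$ (forced by $\sum\alpha_k=\infty$) to obtain a limiting graph. The paper instead runs the \emph{Lyapunov--Perron} scheme: it rewrites a convergent trajectory via the discrete variation-of-constants formula, splits into stable and unstable blocks $B(k,i)$, $C(k,i)$, and shows the resulting operator $T$ on the sequence space $\ell_0(\mathbb{B}(\delta))$ is a Banach contraction; the manifold is then read off from the fixed-point trajectory's initial condition. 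The Lyapunov--Perron route is arguably cleaner in the non-autonomous setting because it poses a \emph{single} fixed-point problem on a \emph{single} space, whereas your phrase ``the induced action of the family $\{g_k\}$ on such graphs is a contraction'' conceals the fact that there is no one graph transform to iterate---you must compose distinct pullbacks $\Gamma_0\circ\cdots\circ\Gamma_N$ and argue the composition stabilizes as $N\to\infty$ (this is standard in Pesin theory, but it is a genuine extra layer you have not spelled out). Both approaches hinge on the same pair of estimates---in the paper's notation, the uniform boundedness of $\sum_i\alpha_i\|B(k,i{+}1)\|$ and $\sum_i\alpha_{k+1+i}\|C(k{+}1{+}i,k{+}1)^{-1}\|$ (Lemmas~\ref{boundedB} and~\ref{boundedC})---and both consume the hypothesis $\alpha_k\in\Omega(1/k)$ at the same place. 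Your third step (Lindel\"of cover and pullback under the local diffeomorphisms $g_k$) is exactly the paper's Corollary~\ref{measure 0}.
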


\paragraph*{Organization of the paper.}
The paper is organized as follows: In Section 2 we give important definitions for the rest of the paper, in Section 3 we provide intuition and technical overview of our results, in Section 4 we show a new Stable-manifold theorem that is applicable to a class of time non-homogeneous dynamical systems and finally in Section 5 we show how this new manifold theorem can be applied to Gradient descent, Mirror Descent, Proximal point and Manifold Descent. %Due to space constraints, most of the proofs can be found in the supplementary material.

\paragraph*{Notation.}

\begin{comment}
Fixed notations, remove n's. %
\\
define g tilde %
\\
Change tilde theta %
\\
change mirror descent, no tensors %
\\
a paragraph for Lyapunov-perron method
\\
() for citing equations %
\\
Combine manifold descent and intrisict mnfld descent%
\\
put open cover argument in appendix %
\\
In Mirror descent remove diffeomorphism.... %
\\
Lebesgue measure zero, no mu %
\\
Define manifold in prelim
\\
alphak is converges to 0 %
\\
abs->norm %
\end{comment}

Throughout this paper, we denote
  $\mathbb{N}$ the set of nonnegative integers and $\mathbb{R}$ the set of real numbers,
 $\norm{\cdot}$ the Euclidean norm,
 bolded $\vec{x}$ the vector,
 $\mathbb{B}(\vec{x},\delta)$ the open ball centering at $\vec{x}$ with radius $\delta$,
 $g(k,\vec{x})$ the update rule for optimization algorithms indexed by $k\in\mathbb{N}$, $\tilde{g}(m,n,\vec{x})$ the composition $g(m,...,g(n+1,g(n,\vec{x}))...)$ for $m\ge n$,
 $\nabla f$ the gradient of $f:\mathbb{R}^d\rightarrow\mathbb{R}$ and
 $\nabla^2f(\vec{x})$ the Hessian of $f$ at $\vec{x}$,
% \\
 %$\nabla_R f$ the gradient of $f:M\rightarrow\mathbb{R}$, where $M$ is a differentiable manifold with Riemannian metric $R$,
 $D_{\vec{x}}g(k,\vec{x})$ the differential with respect to variable $\vec{x}$,

\section{Preliminaries}\label{sec:prelims}
In this section we provide all necessary definitions that will be needed for the rest of the paper.
\begin{definition}[Time (non)-homogeneous] We call a dynamical system $x_{k+1} = g(x_k)$ as time homogeneous since $g$ does not on step $k$. Furthermore, we call a dynamical system $x_{k+1} = g(k,x_k)$ time non-homogeneous as $g$ depends on $k$.
\end{definition}
\begin{definition}[Critical point] Given a $C^2$ (twice continuously differentiable) function $f: \mathbb{X} \to \mathbb{R}$ where $\mathbb{X}$ is an open, convex subset of $\mathbb{R}^d$, the following definitions are provided for completeness.
\begin{enumerate}
\item A point $\vec{x}^*$ is a critical point of $f$ if $\nabla f(\vec{x}^*)=0$.
\item A critical point is a local minimum if there is a neighborhood $U$ around $\vec{x}^*$ such that $f(\vec{x}^*)\le f(\vec{x})$ for all $\vec{x}\in U$, and a local maximum if $f(\vec{x}^*)\ge f(\vec{x})$.
\item A critical point is a saddle point if for all neighborhoods $U$ around $\vec{x}^*$, there are $\vec{x},\vec{y}\in U$ such that $f(\vec{x})\le f(\vec{x}^*)\le f(\vec{y})$.
\item A critical point $\vec{x}^*$ is isolated if there is a neighborhood $U$ around $\vec{x}^*$, and $\vec{x}^*$ is the only critical point in $U$.
\end{enumerate}
\end{definition}
This paper will focus on saddle points that have directions of strictly negative curvature, that is the concept of strict saddle. We also assume that the Hessian matrix of the objective function $f$ is invertible. 
\begin{definition}[Strict Saddle]
A critical point $\vec{x}^*$ of $f$ is a strict saddle if $\lambda_{\min}(\nabla^2 f(\vec{x}^*))<0$ (minimum eigenvalue of the Hessian computed at the critical point is negative).
\end{definition}

Let $\mathcal{X}^*$ be the set of strict saddle points of function $f$ and we follow the Definition 2 of \cite{LPP} for the global stable set of $\mathcal{X}^*$.

\begin{definition}[Global Stable Set and fixed points]
Given a dynamical system (e.g., gradient descent $\vec{x}_{k+1} = \vec{x}_k - \alpha_k \nabla f(\vec{x}_k)$)
\begin{equation}\label{eq:dynamicalsystem}
\vec{x}_{k+1} = g(k,\vec{x}_k),
\end{equation} the global stable set $W^s(\mathcal{X}^*)$ of $\mathcal{X}^*$ is the set of initial conditions where the sequence $\vec{x}_k$ converges to a strict saddle. This is defined as:
\[
W^s(\mathcal{X}^*)=\{\vec{x}_0:\lim_{k\rightarrow\infty}\vec{x}_k \in\mathcal{X}^*\}.
\]
Moreover, $\vec{z}$ is called a fixed point of the system (\ref{eq:dynamicalsystem}) if $\vec{z} = g(k ,\vec{z})$ for all natural numbers $k$.
\end{definition}

\begin{definition}[Manifold]
A $C^k$-differentiable, $d$-dimensional manifold is a space topological $M$, together with a collection of charts $\{(U_{\alpha},\phi_{\alpha})\}$, where each $\phi_{\alpha}$ is a $C^k$-diffeomorphism from an open subset $U_{\alpha}\subset M$ to $\mathbb{R}^d$. The charts are compatible in the sense that, whenever $U_{\alpha}\cap U_{\beta}\ne\emptyset$, the transition map $\phi_{\alpha}\circ\phi_{\beta}^{-1}:\phi_{\beta}(U_{\beta}\cap U_{\alpha})\rightarrow\mathbb{R}^d$ is of $C^k$.
\end{definition}

\section{Intuition and Overview}
\label{sec:intuition}
In this section we will illustrate why gradient descent and related first-order methods do not converge to saddle points, even for time varying/vanishing step-sizes $\alpha_k$ of order $\Omega\left(\frac{1}{k}\right)$. Consider the case of a quadratic, $f(\vec{x})=\frac{1}{2} \vec{x}^T A \vec{x}$ where $A = \diag(\lambda_1,...,\lambda_d)$ is a $d \times d$, non-singular, diagonal matrix with at least a negative eigenvalue. Let $\lambda_1,..., \lambda_{j}$ be the positive eigenvalues of $A$ (the first $j$) and $\lambda_{j+1},..., \lambda_{d}$ be the non-positive ones. It is clear that $\vec{x}^*=\vec{0}$ is the unique critical point of function $f$ and the Hessian $\nabla^2 f$ is $A$ everywhere (and hence at the critical point). Moreover, it is clear that $\vec{x}^*$ is a strict saddle point (not a local minimum).

Gradient descent with step-size $\alpha_k$ (it holds that $\alpha_k \geq 0$ for all $k$ and $\lim_{k \to \infty} \alpha_k =0$) has the following form:
\[
	\vec{x}_{k+1} =\vec{x}_k + \alpha_k A \vec{x}_k = (I - \alpha_k A) \vec{x}_k.
\]
Assuming that $\vec{x}_0$ is the starting point, then it holds that $\vec{x}_{k+1} = \left(\prod_{t=0}^k (I - \alpha_{k-t} A)\right) \vec{x}_0$. We conclude that
\begin{equation}\label{eq:Intuition}
\vec{x}_{k+1} = \diag\left(\prod_{t=0}^k (1-\lambda_1\alpha_t),...,\prod_{t=0}^k (1-\lambda_n\alpha_t)\right)\vec{x}_0.
\end{equation}
We examine when it is true that $\lim_{k \to \infty} \vec{x}_k = \vec{x}^*$. It is clear that $\prod_{t=0}^{\infty} (1-\lambda\alpha_t) = e^{\sum_{t=0}^{\infty} \ln (1 - \lambda\alpha_t)},$
and has the same convergence properties as
\begin{equation}\label{eq:nolog}
e^{-\lambda \sum_{t=0}^{\infty} \alpha_t}.
\end{equation}
 For $\lambda >0$, the term (\ref{eq:nolog}) converges to zero if and only if $\sum_{t=0}^{\infty} \alpha_t = +\infty$  which is true if $\alpha_t$ is $\Omega\left(\frac{1}{t}\right)$. Moreover, for $\lambda=0$ it holds that the term (\ref{eq:nolog}) remains a constant (independently of the choice of stepsize $\alpha_k$) and for $\lambda<0$ it holds that the term (\ref{eq:nolog}) diverges for $\alpha_t$ to be $\Omega\left(\frac{1}{t}\right)$. Therefore, for $\alpha_k$ being $\Omega\left(\frac{1}{k}\right)$ we conclude that $\lim_{k \to \infty} x_k = \vec{0}$ whenever the initial point $\vec{x}_0$ satisfies $x_0^i = 0$ ($i$-th coordinate of $\vec{x}_0$) for $\lambda_i \geq 0$.

Hence, if  $\vec{x}_0 \in E_s:=\textrm{span}(e_1,\ldots, e_j)$\footnote{$\{e_1,...,e_d\}$ denote the classic orthogonal basis of $\mathbb{R}^d$.}, then $\vec{x}_t$ converges to the saddle point $x^*$ and if $\vec{x}_0$ has a component outside $E_s$ then gradient descent diverges.  For the example above, the global stable set of $\vec{x}^*$ is the subspace $E_s$ which is of measure zero since $E_s$ is not full dimensional.
\begin{remark}[$\alpha_k$ of order $O\left( \frac{1}{k^{1+\epsilon}}\right)$] In the case $\alpha_k$ is a sequence of stepsizes that converges to zero with a rate $\frac{1}{k^{1+\epsilon}}$ for any $\epsilon>0$ (example $\frac{1}{k^2}, \frac{1}{2^k}$ etc), then it holds that $\sum_{t=0}^{\infty} \alpha_k$ converges and hence in our example above we conclude that $\lim_{k \to \infty} \vec{x}_k$ exists, i.e., $\vec{x}_k$ converges but not necessarily to a critical point.
\end{remark}

\subsection{Technical Overview}
The challenging part of proving our main result is the lack of generic theory for time non-homogeneous dynamical systems, either for continuous or discrete time. Moreover, as far as gradient descent, mirror descent, etc are concerned, the corresponding dynamical system that needs to be analyzed is more complicated when the objective function is not quadratic and the analysis of previous subsection does not apply.

Suppose we are given a function $f$ that is $C^2$, and $\vec{0}$ is a saddle point of $f$. The Taylor expansion of the gradient descent in a neighborhood of $\vec{0}$ is as follows:
\begin{equation}\label{eq:TaylorofGD}
\vec{x}_{k+1}=(I-\alpha_k\nabla^2f(\vec{0}))\vec{x}_k+\eta(k,\vec{x}_k),
\end{equation}
where $\eta(k,\vec{0})=\vec{0}$ and $\eta(k,\vec{x})$ is of order $o(\norm{\vec{x}})$ around $\vec{0}$ for all naturals $k$.

Due to the error term $\eta(k,\vec{x}_k)$, the approach for quadratic functions does not imply the existence of the stable manifold. Inspired by the proof of Stable-manifold theorem for time homogeneous ODEs, we prove a Stable-manifold theorem for discrete time non-homogeneous dynamical system (\ref{eq:TaylorofGD}). In words, we prove the existence of a manifold $W^s$ that is not of full dimension (it has the same dimension as $E^s$, where $E^s$ denotes the subspace that is spanned by the eigenvectors with corresponding positive eigenvalues of matrix $\nabla^2f(\vec{0})$).

%defined by gradient descent, i.e. the initial condition $\vec{x}_0$ that makes gradient descent converge to $\vec{0}$ lies on the graph of some differentiable map $\varphi$ from stable subspace to unstable subspace. And moreover, the graph is tangent to the stable subspace of the Jacobian of $g(k,\vec{x})$ at $\vec{0}$.
To show this, we derive the expression of (\ref{eq:Intuition}) for the general function $f$ to be:
\begin{equation}\label{eq:general}
\vec{x}_{k+1}=A\left(k,0\right)\vec{x}_0+\sum_{i=0}^{k}A\left(k,i+1\right)\eta\left(i,\vec{x}_i\right),
\end{equation}
where $A\left(m,n\right)=\left(I-\alpha_m\nabla^2f(\vec{0})\right)...\left(I-\alpha_n\nabla^2f(\vec{0})\right)$ for $m\ge n$, and $A\left(m,n\right)=I$ if $m<n$. Next, we generate a sequence $\{\vec{x}_k\}_{k\in\mathbb{N}}$ from (\ref{eq:general}) with an initial point $\vec{x}_0=(\vec{x}_0^+,\vec{x}_0^-)$, where $\vec{x}_0^+\in E^s$ and $\vec{x}_0^-\in E^u$. If this sequence converges to $\vec{0}$, the equation (\ref{eq:general}) induces an operator $T$ on the space of sequences converging to $\vec{0}$, and the sequence $\{\vec{x}_k\}_{k\in\mathbb{N}}$ is the fixed point of $T$. This is so called the Lyapunov-Perron method (see supplementary material for some brief overview of the method). By Banach fixed point theorem (see supplementary material for the statement of the theorem), it can be proved that the sequence $\{\vec{x}_k\}_{k\in\mathbb{N}}$ (as the fixed point of $T$) exists and is unique. Furthermore, this implies that there is a unique $\vec{x}_0^-$ corresponding to $\vec{x}_0^+$, i.e. there exists a well defined function $\varphi:E^s\rightarrow E^u$ such that $\vec{x}_0^-=\varphi(\vec{x}_0^+)$.
% And this means that $\vec{x}_0^-$ to show that if the sequence $\{\vec{x}_k\}_{k\in \mathbb{N}}$ generated by equation \ref{eq:TaylorofGD} converges to $\vec{0}$, it must be the unique fixed point of some contraction map on the space of sequences converging to $\vec{0}$.

      %And this implies the existence and uniqueness of the map $\varphi$. After this, we show that $\varphi$ is $C^1$ if $f$ is $C^2$. And moreover, the graph of $\varphi$ is tangent to the stable subspace, meaning that the dimension of the graph is equal to the dimension of the stable subspace.

\section{Stable Manifold Theorem for Time Non-homogeneous Dynamical Systems}
We start this section by showing the main technical result of this paper. This is a new stable manifold theorem that works for time non-homogeneous dynamical systems and is used to prove our main result (Theorem \ref{Main:Informal}) for Gradient Descent, Mirror Descent, Proximal Point and Manifold Descent. The proof of this theorem exploits the structure of the aforementioned first-order methods as dynamical systems.

\begin{theorem}[A new stable manifold theorem]\label{SMT}
Let $H$ be an invertible $d\times d$ real diagonal matrix with at least one negative eigenvalue, i.e. $H=diag\{\lambda_1,...,\lambda_d\}$ with $\lambda_1\ge\lambda_2\ge...\lambda_s>0>\lambda_{s+1}\ge...\ge\lambda_d$ and assume $\lambda_d<0$.
%Let $U$ be an open subset of $\mathbb{R}^d$ containing $\vec{0}$,
 Let $\eta(k,\vec{x})$ be a continuously differentiable function such that $\eta(k,\vec{0}) = \vec{0}$ and for each $\epsilon>0$, there exists a neighborhood of $\vec{0}$ in which it holds
\begin{equation}\label{Lipschitz}
\norm{\eta\left(k,\vec{x}\right)-\eta\left(k,\vec{y}\right)} \le\alpha_k\epsilon\norm{\vec{x}-\vec{y}}, \textrm{ for all naturals }k.
\end{equation}
Let $\{\alpha_k\}_{k\in\mathbb{N}}$ be a sequence of positive real numbers of order $\Omega\left(\frac{1}{k}\right)$ that converges to zero. We define the time non-homogeneous dynamical system
\begin{equation}
\vec{x}_{k+1}=g(k,\vec{x}_k), \textrm{ where }g(k,\vec{x})=(I-\alpha_kH)\vec{x}+\eta(k,\vec{x}).
\end{equation}
Suppose that $E=E^s\oplus E^u$, where $E^s$ is the span of the eigenvectors corresponding to negative eigenvalues of $H$, and $E^u$ is the span of the eigenvectors corresponding to nonnegative eigenvalues of $H$. Then there exists a neighborhood $U$ of $\vec{0}$ and a $C^1$-manifold $V(\vec{0})$ in $U$ that is tangent to $E^s$ at $\vec{0}$, such that for all $\vec{x}_0\in V(\vec{0})$, $\lim_{k\rightarrow\infty} g(k,\vec{x}_k)=\vec{0}$. Moreover, $\bigcap_{k=0}^{\infty}\tilde{g}^{-1}(k,0,U)\subset V(\vec{0})$.
\end{theorem}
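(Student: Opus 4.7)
The plan is to build $V(\vec{0})$ by a discrete Lyapunov-Perron scheme: set up an implicit equation whose solutions are precisely the orbits of $g$ that converge to $\vec{0}$, solve it as the fixed point of a contraction on a small sup-ball of sequences, and then read off $V(\vec{0})$ as a graph over the contracting subspace $E^s$. First I would decompose via the eigen-projections $P^s,P^u$ of $H$ and, for $m\ge n$, define the transfer operator $A(m,n):=\prod_{j=n}^{m}(I-\alpha_j H)$ (and $A(m,n):=I$ otherwise); since $H$ is diagonal, $A(m,n)$ commutes with $P^s,P^u$ and I write $A^s(m,n), A^u(m,n)$ for the restrictions. Using $\log(1-\alpha_j\lambda)=-\alpha_j\lambda+O(\alpha_j^2)$ together with $\sum_{j=n}^m\alpha_j=\Theta(\log(m/n))$ (which is where $\alpha_k=\Omega(1/k)$ enters decisively), one gets the \emph{polynomial} bounds $\norm{A^s(m,n)}\le C(n/m)^{c\lambda_s}$ on the contracting subspace and $\norm{A^u(i,k)^{-1}}\le C(k/i)^{c|\lambda_d|}$ on the inverse of the expanding subspace (the latter using $\lambda_d<0$ essentially). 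A telescoping identity then gives the two uniform bounds that drive the whole contraction argument:
\begin{equation*}
\sum_{i=0}^{k-1}\alpha_i\,\norm{A^s(k-1,i+1)}\le \frac{1}{\lambda_s}, \qquad \sum_{i=k}^{\infty}\alpha_i\,\norm{A^u(i,k)^{-1}}\le C',
\end{equation*}
with $C'$ independent of $k$.

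Second, iterating the recursion gives $\vec{x}_{k+1}=A(k,0)\vec{x}_0+\sum_{i=0}^{k}A(k,i+1)\eta(i,\vec{x}_i)$. Projecting onto $E^s,E^u$ and solving along any orbit that converges to $\vec{0}$ (using invertibility of $A^u$, since $\lambda\le 0$ gives $1-\alpha_k\lambda\ge 1$) forces $\vec{x}_0^u=-\sum_{i=0}^{\infty}A^u(i,0)^{-1}\eta^u(i,\vec{x}_i)$; a telescoping cancellation then converts the orbit into the fixed-point system
\begin{equation*}
\vec{x}_k^s=A^s(k-1,0)\vec{x}_0^s+\sum_{i=0}^{k-1}A^s(k-1,i+1)\eta^s(i,\vec{x}_i), \qquad \vec{x}_k^u=-\sum_{i=k}^{\infty}A^u(i,k)^{-1}\eta^u(i,\vec{x}_i).
\end{equation*}
Call this right-hand side $T$. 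With $\vec{x}_0^s$ a free parameter in a small ball of $E^s$ and $\mathcal{B}_\delta$ the complete metric space of sequences $\{\vec{x}_k\}$ with $\sup_k\norm{\vec{x}_k}\le\delta$ under the sup norm, the Lipschitz hypothesis (\ref{Lipschitz}) combined with the two uniform bounds above shows that for $\delta,\epsilon$ small enough (with $\epsilon$ small compared to $\lambda_s$ and $1/C'$), $T$ maps $\mathcal{B}_\delta$ into itself and is a strict contraction. Banach's theorem then produces the unique fixed-point orbit $\{\vec{x}_k^{\ast}(\vec{x}_0^s)\}$, which converges to $\vec{0}$.

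Third, I read off the manifold as a graph. Set $\varphi(\vec{x}_0^s):=-\sum_{i=0}^{\infty}A^u(i,0)^{-1}\eta^u(i,\vec{x}_i^{\ast}(\vec{x}_0^s))$ and $V(\vec{0}):=\{(\vec{x}_0^s,\varphi(\vec{x}_0^s)):\vec{x}_0^s\in E^s\cap\mathbb{B}(\vec{0},\delta')\}$. The uniform contraction principle, applied to the $C^1$-family $T(\cdot;\vec{x}_0^s)$, yields $\varphi\in C^1$. At $\vec{x}_0^s=\vec{0}$ the fixed-point orbit is identically $\vec{0}$, and the Lipschitz bound (\ref{Lipschitz}) with $\epsilon\to 0$ implies $D_{\vec{x}}\eta(k,\vec{0})=0$ for all $k$; differentiating the fixed-point equation at $\vec{x}_0^s=\vec{0}$ then yields $D\varphi(\vec{0})=0$, so $V(\vec{0})$ is tangent to $E^s$ at the origin. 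For the trapping inclusion $\bigcap_{k\ge 0}\tilde{g}^{-1}(k,0,U)\subset V(\vec{0})$: any orbit that stays in a sufficiently small $U$ for all $k$ automatically lies in $\mathcal{B}_\delta$, the Lyapunov-Perron derivation is reversible for such an orbit, and by uniqueness of the fixed point it must equal $\{\vec{x}_k^{\ast}(P^s\vec{x}_0)\}$, placing $\vec{x}_0$ on the graph of $\varphi$.

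The main technical obstacle, I expect, is exactly the replacement of exponential by polynomial decay/growth of the linear propagator. The classical time-homogeneous Lyapunov-Perron proof leans heavily on exponential estimates, so here every summability constant in the contraction argument must be produced by hand: the telescoping identity on the stable side and the polynomial tail estimate on the unstable side (which is what forces the essential use of $\lambda_d<0$, since the $(k/i)^{c|\lambda_d|}$ factor is precisely what makes $\sum_{i\ge k}\alpha_i\norm{A^u(i,k)^{-1}}$ converge with a $k$-uniform constant under $\alpha_k=\Omega(1/k)$) are the technical heart of the argument. Once these two bounds are secured, the remainder of the construction follows the classical Lyapunov-Perron template.
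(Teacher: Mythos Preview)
Your proposal follows the same Lyapunov--Perron scheme as the paper: define the same operator $T$ on a sequence space, bound the same two sums $\sum_i\alpha_i\norm{B(k,i+1)}$ and $\sum_i\alpha_i\norm{C(k+i,k)^{-1}}$ to obtain a contraction, and apply Banach's theorem to produce the graph $\varphi$. One technical caution, though: the polynomial tail estimate you invoke on the unstable side does not, as written, give a $k$-uniform bound. The estimate $\norm{A^u(i,k)^{-1}}\le C(k/i)^{\beta}$ (which, incidentally, is governed by the eigenvalue of \emph{smallest} magnitude in $E^u$, not by $\lambda_d$) uses only the lower bound $\alpha_j\ge c/j$; when you then multiply by the actual $\alpha_i$, which may be much larger (e.g.\ $\alpha_i=1/\sqrt{i}$), the series $\sum_{i\ge k}\alpha_i(k/i)^{\beta}$ need not converge for small $\beta$. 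The telescoping identity you mention first is what actually closes this: writing $\alpha_i\mu=(1+\alpha_i\mu)-1$ gives $\sum_{i\ge k}\alpha_i\prod_{j=k}^{i}(1+\alpha_j\mu)^{-1}\le 1/\mu$ exactly. The paper takes a longer route through exponential bounds and the incomplete Gamma function for this sum, and a recursive analysis $S_{k+1}=(1-\alpha_{k+1}\lambda)S_k+\alpha_{k+1}$ for the stable sum, so your telescoping is in fact cleaner on both sides. Your remaining simplifications---working on the sup-ball of merely bounded sequences rather than on $\ell_0$, invoking the uniform contraction principle for $C^1$ regularity, and deducing the trapping inclusion directly from Banach uniqueness---are sound and somewhat tidier than the paper's versions; just note that if you stay on bounded sequences you owe one extra line showing the fixed-point orbit actually tends to $\vec{0}$.
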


We can generalize Theorem \ref{SMT} to the case where matrix $H$ is diagonalizable and for any fixed point $\vec{x}^*$ (instead of $\vec{0}$, using a shifting argument). The statement is given below.
\begin{corollary}\label{SMT2}
Let $\{\alpha_k\}_{k\in\mathbb{N}}$ be a sequence of positive real numbers that converges to zero. Additionally, $\alpha_k\in\Omega\left(\frac{1}{k}\right)$. Let $g(k,\vec{x}):\mathbb{R}^d\rightarrow\mathbb{R}^d$ be $C^1$ maps for all $k\in\mathbb{N}$ and
\begin{equation}\label{eq:dyn2}
\vec{x}_{k+1}=g(k,\vec{x}_k)
\end{equation}
be a time non-homogeneous dynamical system. Assume $\vec{x}^*$ is a fixed point, i.e. $g(k,\vec{x}^*)=\vec{x}^*$ for all $k\in\mathbb{N}$. Suppose the Taylor expansion of $g(k,\vec{x})$ at $\vec{x}^*$ in some neighborhood of $\vec{x}^*$,
\begin{equation}\label{eq:dyn3}
g(k,\vec{x})=g(k,\vec{x}^*)+D_{\vec{x}}g(k,\vec{x}^*)(\vec{x}-\vec{x}^*)+\theta(k,\vec{x}), \textrm{ satisfies }
\end{equation}
\begin{enumerate}
\item $D_{\vec{x}}g(k,\vec{x}^*)=I-\alpha_kG,$  $G$ invertible, real diagonalizable with at least one negative eigenvalue;
\item For each $\epsilon>0$, there exists an open neighborhood centering at $\vec{x}^*$ of radius $\delta>0$, denoted as $\mathbb{B}(\vec{x}^*,\delta)$, such that
\begin{equation}\label{condition2}
\norm{\theta(k,\vec{u}_1)-\theta(k,\vec{u}_2)}\le\alpha_k\epsilon\norm{\vec{u}_1-\vec{u}_2}
\end{equation}
for all $k\in\mathbb{N}$ and all $\vec{u}_1,\vec{u}_2\in \mathbb{B}(\vec{x}^*,\delta)$.
\end{enumerate}
There exists a open neighborhood $U$ of $\vec{x}^*$ and a $C^1$-manifold $W(\vec{x}^*)$ in $U$, with codimension at least one, such that for $\vec{x}_0\in W(\vec{x}^*)$, $\lim_{k\rightarrow\infty}g(k,\vec{x}^*)=\vec{x}^*$. Moreover, $\bigcap^{\infty}_{k=0}\tilde{g}^{-1}(k,0,U)\subset W(\vec{x}^*)$.
\end{corollary}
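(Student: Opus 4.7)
The plan is to reduce Corollary \ref{SMT2} to Theorem \ref{SMT} via two coordinate changes: a translation sending the fixed point $\vec{x}^*$ to the origin, and a linear change of basis that diagonalizes $G$. Neither step alters the underlying geometry of the stable set, but both require checking that the technical hypotheses of Theorem \ref{SMT} are preserved.

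For the translation, set $\vec{y}_k := \vec{x}_k - \vec{x}^*$. Using (\ref{eq:dyn3}) together with $g(k,\vec{x}^*)=\vec{x}^*$, the iteration rewrites as $\vec{y}_{k+1} = (I-\alpha_k G)\vec{y}_k + \tilde{\theta}(k,\vec{y}_k)$, where $\tilde{\theta}(k,\vec{y}) := \theta(k,\vec{y}+\vec{x}^*)$. The fixed-point identity forces $\theta(k,\vec{x}^*)=\vec{0}$, hence $\tilde{\theta}(k,\vec{0})=\vec{0}$, and (\ref{condition2}) transfers unchanged to $\tilde{\theta}$ on $\mathbb{B}(\vec{0},\delta)$. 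For the diagonalization, real diagonalizability of $G$ gives $G=PHP^{-1}$ with $H$ real diagonal inheriting the spectrum of $G$; in particular $H$ has at least one negative eigenvalue. Put $\vec{z}_k := P^{-1}\vec{y}_k$, so
\[
\vec{z}_{k+1} = (I-\alpha_k H)\vec{z}_k + \hat{\eta}(k,\vec{z}_k), \qquad \hat{\eta}(k,\vec{z}) := P^{-1}\tilde{\theta}(k,P\vec{z}).
\]
Clearly $\hat{\eta}(k,\vec{0})=\vec{0}$, and for $\vec{z}_1,\vec{z}_2$ in the ball of radius $\delta/\norm{P}$ around $\vec{0}$ one obtains $\norm{\hat{\eta}(k,\vec{z}_1)-\hat{\eta}(k,\vec{z}_2)} \le \norm{P^{-1}}\norm{P}\,\alpha_k\epsilon\,\norm{\vec{z}_1-\vec{z}_2}$. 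Since $\epsilon$ is universally quantified in (\ref{condition2}), absorbing the factor $\norm{P^{-1}}\norm{P}$ into a rescaled $\epsilon$ recovers exactly the hypothesis (\ref{Lipschitz}) of Theorem \ref{SMT}.

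Applying Theorem \ref{SMT} in the $\vec{z}$-coordinates yields a neighborhood $\tilde{U}$ of $\vec{0}$ and a $C^1$-manifold $V(\vec{0})\subset\tilde{U}$ on which iterates converge to $\vec{0}$, together with the pre-image containment inside $V(\vec{0})$. The codimension of $V(\vec{0})$ is at least one because the assumption $\lambda_d<0$ in Theorem \ref{SMT} makes the subspace complementary to the tangent space $E^s$ nontrivial. Now pull back using the affine diffeomorphism $\Phi(\vec{z}) := \vec{x}^*+P\vec{z}$: set $U := \Phi(\tilde{U})$ and $W(\vec{x}^*) := \Phi(V(\vec{0}))$. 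Diffeomorphism invariance of openness, $C^1$-manifold structure, codimension, limits, and pre-image intersections then delivers all the required properties of Corollary \ref{SMT2} in one stroke.

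The only delicate point is the estimate on $\hat{\eta}$: the diagonalizing change of basis introduces the condition number $\norm{P^{-1}}\norm{P}$, which could in principle destroy the Lipschitz-type bound. It does not, precisely because (\ref{condition2}) is a Lipschitz-in-$\epsilon$ hypothesis universally quantified over $\epsilon$, so any finite multiplicative blow-up is absorbed by choosing $\epsilon$ correspondingly smaller. Real diagonalizability of $G$ is essential; with only complex diagonalizability one could not obtain a real diagonal $H$ as demanded by Theorem \ref{SMT}, and the argument would need to be adapted through a real Jordan-block refinement of the stable manifold theorem.
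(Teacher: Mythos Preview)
Your proposal is correct and follows essentially the same approach as the paper: conjugate the dynamics by the affine diffeomorphism $\vec{x}\mapsto Q(\vec{x}-\vec{x}^*)$ (the paper does translation and diagonalization in one step rather than two), verify that the Lipschitz hypothesis on the remainder survives after absorbing the condition number $\norm{Q}\norm{Q^{-1}}$ into $\epsilon$, apply Theorem~\ref{SMT}, and pull back. Your definition $W(\vec{x}^*):=\Phi(V(\vec{0}))$ is in fact slightly cleaner than the paper's, which defines $W(\vec{x}^*)$ as the local stable set and then argues it is contained in $\varphi^{-1}(V(\vec{0}))$.
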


\begin{proof}
Since $G$ is diagonalizable, there exists invertible matrix $Q$ such that $G=Q^{-1}HQ$, hence
$
QGQ^{-1}=H,
$
where $H=diag\{\lambda_1,...,\lambda_d\}$ (i.e., $H$ is a diagonal matrix with entries $\lambda_1,...,\lambda_d$). Consider the map $\vec{z}=\varphi(\vec{x})=Q(\vec{x}-\vec{x}^*)$. $\varphi$ induces a new dynamical system in terms of $\vec{z}$ as follows:
\[
Q^{-1}\vec{z}_{k+1}=(I-\alpha_kG)Q^{-1}\vec{z}_k+\theta(k,Q^{-1}\vec{z}_k+\vec{x}^*).
\]
Multiplying by $Q$ from the left on both sides, we have
\begin{align}
\vec{z}_{k+1}&=Q(I-\alpha_kG)Q^{-1}\vec{z}_k+Q\theta(k,Q^{-1}\vec{z}_k+\vec{x}^*)
%\\
%&=(QQ^{-1}-\alpha_kQGQ^{-1})\vec{z}_k+Q\theta(k,Q^{-1}\vec{z}_k)
=(I-\alpha_kH)\vec{z}_k+\hat{\theta}(k,\vec{z}_k),\label{eq:dyn4}
\end{align}
where $\hat{\theta}(k,\vec{z}_k)=Q\theta(k,Q^{-1}\vec{z}_k+\vec{x}^*)$. Denote
$
q(k,\vec{z})=(I-\alpha_kH)\vec{z}+\hat{\theta}(k,\vec{z})
$
the update rule given by equation (\ref{eq:dyn4}). In order to apply Theorem \ref{SMT}, we next verify that $\hat{\theta}(k,\cdot)$ satisfies the condition (\ref{Lipschitz}) in Theorem \ref{SMT} for all $k\in\mathbb{N}$. It is essentially to verify that given any $\epsilon>0$, there exists a $\delta'>0$, such that
\begin{equation}\label{Lipschitz2}
\norm{\hat{\theta}(k,\vec{w}_1)-\hat{\theta}(k,\vec{w}_2)}=\norm{Q\theta(k,Q^{-1}\vec{w}_1+\vec{x}^*)-Q\theta(k,Q^{-1}\vec{w}_2+\vec{x}^*)}\le\alpha_k\epsilon\norm{\vec{w}_1-\vec{w}_2}
\end{equation}
for all $\vec{w}_1,\vec{w}_2\in \mathbb{B}(0,\delta')$. Let's elaborate it.
According to (\ref{condition2}) of condition 2, for any given $\epsilon>0$, and then $\frac{\epsilon}{\norm{Q}\norm{Q^{-1}}}$ is also a small positive number, there exists a $\delta>0$ (w.r.t. $\frac{\epsilon}{\norm{Q}\norm{Q^{-1}}}$), such that
\[
\norm{\theta(k,\vec{u}_1)-\theta(k,\vec{u}_2)}\le\alpha_k\frac{\epsilon}{\norm{Q}\norm{Q^{-1}}}\norm{\vec{u}_1-\vec{u}_2}
\]
for all $\vec{u}_1,\vec{u}_2\in\mathbb{B}(\vec{x}^*,\delta)$. Denote $V=Q(\mathbb{B}(\vec{x}^*,\delta)-\vec{x}^*)$, i.e.
\[
V=\{\vec{w}\in\mathbb{R}^d: \vec{w}=Q(\vec{u}-\vec{x}^*)\ \ \text{for some}\ \ \vec{u}\in\mathbb{B}(\vec{x}^*,\delta)\},
\]
and it is easy to see that $\vec{0}\in V$. Since $Q(\vec{u}-\vec{x}^*)$ is a diffeomorphism (composition of a translation and a linear isomorphism) from the open ball $\mathbb{B}(\vec{x}^*,\delta)$ to $\mathbb{R}^d$, $V$ is an open neighborhood (not necessarily a ball) of $\vec{0}$. Therefore, there exists an open ball at $\vec{0}$ with radius $\delta'$, denoted as $\mathbb{B}(\vec{0},\delta')$, such that $\mathbb{B}(\vec{0},\delta')\subset V$. Next we show that $\mathbb{B}(\vec{0},\delta')$ satisfies the inequality (\ref{Lipschitz2}).
By the definition of $V$, for any $\vec{w}_1,\vec{w}_2\in\mathbb{B}(\vec{0},\delta')\subset V$, there exist $\vec{u}_1,\vec{u}_2\in\mathbb{B}(\vec{x}^*,\delta)$, such that
\begin{equation}
\vec{w}_1=Q(\vec{u}_1-\vec{x}^*) ,\ \ \vec{w}_2=Q(\vec{u}_1-\vec{x}^*),
\end{equation}
and the inverse transformation is given by
\[
\vec{u}_1=Q^{-1}\vec{w}_1+\vec{x}^*, \ \ \vec{u}_2=Q^{-1}\vec{w}_2+\vec{x}^*.
\]
 Plugging to inequality (\ref{Lipschitz2}), we have
\begin{align*}
\norm{\hat{\theta}(k,\vec{w}_1)-\hat{\theta}(k,\vec{w}_2)}&=\norm{Q\theta(k,Q^{-1}\vec{w}_1+\vec{x}^*)-Q\theta(k,Q^{-1}\vec{w}_2+\vec{x}^*)}
%\\
%&=\norm{Q\theta(k,Q^{-1}Q(\vec{u}_1-\vec{x}^*))-Q\theta(k,Q^{-1}Q(\vec{u}_2-\vec{x}^*))}
\\
&=\norm{Q\theta(k,\vec{u}_1)-Q\theta(k,\vec{u}_2)}
\\
&\le\norm{Q}\norm{\theta(k,\vec{u}_1)-\theta(k,\vec{u}_2)}
\\
&\le\norm{Q}\alpha_k\frac{\epsilon\norm{\vec{u}_1-\vec{u}_2}}{\norm{Q}\norm{Q^{-1}}}
\\
&=\norm{Q}\alpha_k\frac{\epsilon}{\norm{Q}\norm{Q^{-1}}}\norm{(Q^{-1}\vec{w}_1+\vec{x}^*)-(Q^{-1}\vec{w}_2+\vec{x}^*)}
\\
&\le\norm{Q}\alpha_k\frac{\epsilon}{\norm{Q}\norm{Q^{-1}}}\norm{Q^{-1}}\norm{\vec{w}_1-\vec{w}_2}
\\
&=\alpha_k\epsilon\norm{\vec{w}_1-\vec{w}_2}.
\end{align*}
Thus the verification is complete.
So as a consequence of Theorem \ref{SMT}, there exists a $C^1$-manifold $V(\vec{0})$ such that for all $\vec{z}_0\in V(\vec{0})$, $\lim_{k\rightarrow\infty}\tilde{q}(k,0,\vec{z}_0)=\vec{0}$.
For the neighborhood $\varphi^{-1}(\mathbb{B}(\vec{0},\delta'))$ of $\vec{x}^*$, denote $W(\vec{x}^*)$ the local stable set of dynamical system given by $g(k,\vec{x})$, i.e.,
\[
W(\vec{x}^*)=\{\vec{x}_0\in\varphi^{-1}(\mathbb{B}(\vec{0},\delta)):\lim_{k\rightarrow\infty}\tilde{g}(k,0,\vec{x}_0)=\vec{x}^*\}.
\]
We claim that $W(\vec{x}^*)\subset\varphi^{-1}(V(\vec{0}))$ and the proof is as follows:
\\
Suppose $\vec{x}_0\in W(\vec{x}^*)$, then the sequence $\{\vec{x}_k\}_{k\in\mathbb{N}}$ generated by
$
\vec{x}_{k+1}=g(k,\vec{x}_k)
$ with initial condition $\vec{x}_0$ converges to $\vec{x}^*$. The map $\varphi$ induces a sequence $\{\vec{z}_k\}_{k\in\mathbb{N}}$, where $\vec{z}_0=\varphi(\vec{x}_0)$ and
\begin{align}
\vec{z}_{k+1}=\varphi(\vec{x}_{k+1})&=\varphi\left(g(k,\vec{x}_k)\right)
\\
&=Q\left(\vec{x}^*+(I-\alpha_{k}G)(\vec{x}_k-\vec{x}^*)+\theta(k,\vec{x}_k)-\vec{x}^*\right)
\\
& \ \ \ (\text{since $\vec{x}_k=\varphi^{-1}(\vec{z}_k)=Q^{-1}\vec{z}_k+\vec{x}^*$})
\\
&=Q(I-\alpha_{k}G)Q^{-1}\vec{z}_k+Q\theta(k,Q^{-1}\vec{z}_k+\vec{x}^*)
\\
&=(I-\alpha_{k}H)\vec{z}_k+\hat{\theta}(k,\vec{z}_k).
\end{align}
Since $\vec{z}_k=\varphi(\vec{x}_k)$, and $\vec{x}_k\rightarrow\vec{x}^*$, we have that $\vec{z}_k\rightarrow 0$. This implies sequence $\vec{z}_k$ generated by
$
\vec{z}_{k+1}=q(k,\vec{z}_k)
$ with initial condition $\vec{z}_0$ converges to $\vec{0}$, meaning that $\vec{z}_0=\varphi(\vec{x}_0)\in V(\vec{x}^*)$. Therefore $W(\vec{x}^*)\subset\varphi^{-1}(V(\vec{0}))$.
Let $U=\varphi^{-1}(\mathbb{B}(\vec{0},\delta))$ and the proof is complete.
\end{proof}

We conclude this section by the following corollary which can be proved using standard arguments about separability of $\mathbb{R}^d$ (every open cover has a countable subcover). We denote $W^s(\mathcal{A}^*)$ the set of initial conditions so that the given dynamical system $g$ converges to a fixed point $\vec{x}^*$ such that matrix $D_{\vec{x}}g(k,\vec{x}^*)$ has an eigenvalue with absolute value greater than one for all $k$.

\begin{corollary}\label{measure 0}
Let $g(k,\vec{x}):\mathbb{R}^d\rightarrow \mathbb{R}^d$ be the mappings defined in Theorem \ref{SMT2}. Then $W^s(\mathcal{A}^*)$ has Lebesgue measure zero.
\end{corollary}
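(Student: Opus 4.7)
The plan is to cover $\mathcal{A}^*$ by countably many open neighborhoods produced by Corollary \ref{SMT2}, localize each trajectory in $W^s(\mathcal{A}^*)$ to one such neighborhood after a finite number of iterations, and then conclude by countable subadditivity of Lebesgue measure.

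First I would apply Corollary \ref{SMT2} pointwise on $\mathcal{A}^*$. At every $\vec{x}^*\in\mathcal{A}^*$, the hypothesis that $D_{\vec{x}}g(k,\vec{x}^*)$ admits an eigenvalue of modulus greater than one for all $k$ forces the matrix $G$ to have at least one negative eigenvalue (for $\lambda\ge 0$ one has $|1-\alpha_k\lambda|\le 1$ once $\alpha_k$ is small enough, whereas $|1-\alpha_k\lambda|>1$ for all $k$ when $\lambda<0$). Corollary \ref{SMT2} then yields an open neighborhood $U_{\vec{x}^*}$ and a $C^1$-manifold $W(\vec{x}^*)$ of codimension at least one---hence Lebesgue null---satisfying $\bigcap_{k=0}^{\infty}\tilde g^{-1}(k,0,U_{\vec{x}^*})\subset W(\vec{x}^*)$. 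Because $\mathbb{R}^d$ is second countable, the open cover $\{U_{\vec{x}^*}\}_{\vec{x}^*\in\mathcal{A}^*}$ of $\mathcal{A}^*$ admits a countable subcover $\{U_i\}_{i\in\mathbb{N}}$ by the Lindel\"of property, with associated fixed points $\vec{x}_i^*$.

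Next I would decompose $W^s(\mathcal{A}^*)$ according to the neighborhood the trajectory eventually enters and the entry time. For any $\vec{x}_0\in W^s(\mathcal{A}^*)$ with limit $\vec{x}^*\in U_i$, there exists $N=N(\vec{x}_0,i)\in\mathbb{N}$ such that $\tilde g(k,0,\vec{x}_0)\in U_i$ for every $k\ge N$. The subtle point is that Corollary \ref{SMT2} addresses systems initialized at time $0$, whereas the tail of our trajectory is an orbit of the shifted system $\vec{y}_{k+1}=g(k+N,\vec{y}_k)$. I would re-apply Corollary \ref{SMT2} to this shifted system: its step-sizes $\{\alpha_{k+N}\}_k$ are still $\Omega(1/k)$ and its Taylor expansion around $\vec{x}_i^*$ inherits conditions (1) and (2), so it produces a codimension-$\ge 1$ $C^1$-manifold $W_{i,N}$ containing the point $\tilde g(N-1,0,\vec{x}_0)$ (with the convention $\tilde g(-1,0,\cdot)=\mathrm{id}$ when $N=0$). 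Consequently
\[
W^s(\mathcal{A}^*)\subset\bigcup_{i=1}^{\infty}\bigcup_{N=0}^{\infty}\tilde g(N-1,0,\cdot)^{-1}(W_{i,N}).
\]

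Finally I would verify that the right-hand side is Lebesgue null. Each $W_{i,N}$ is null because its codimension is at least one. The map $\tilde g(N-1,0,\cdot)$ is a finite composition of the $C^1$ maps $g(k,\cdot)$ whose derivatives $I-\alpha_kG+D_{\vec{x}}\theta(k,\cdot)$ are non-singular once $\alpha_k$ is small (the Lipschitz bound $\alpha_k\epsilon$ on $\theta$ keeps the derivative arbitrarily close to $I-\alpha_kG$, which is itself invertible for $\alpha_k$ below the reciprocal of the spectral radius of $G$). Thus $\tilde g(N-1,0,\cdot)$ is a local $C^1$-diffeomorphism and preimages of null sets under it are null. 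Countable subadditivity of Lebesgue measure then gives the conclusion. The principal obstacle I anticipate is ensuring global non-singularity of $D_{\vec{x}}g(k,\cdot)$ for the finitely many small $k$ where $\alpha_k$ may exceed the relevant invertibility threshold; this has to be checked method by method in Section 5 (e.g.\ $\alpha_k<1/L$ for gradient descent, strict convexity of the Bregman regularizer for mirror descent, monotonicity of the resolvent for proximal point, and a chart-based local argument for manifold descent), or side-stepped by running the localization argument from a large enough starting time so that all derivatives encountered are automatically invertible.
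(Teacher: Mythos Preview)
Your proposal is correct and follows essentially the same route as the paper: cover $\mathcal{A}^*$ by the neighborhoods from Corollary \ref{SMT2}, pass to a countable subcover via second countability, localize each trajectory after some finite time $T$ (your $N$), invoke the stable manifold for the time-shifted system, and conclude by countable subadditivity plus the fact that preimages under local $C^1$-diffeomorphisms preserve null sets. The paper's proof is terser---it writes $S_{i,T}=\bigcap_{k\ge 0}\tilde g^{-1}(T+k,T,U_{\vec{x}^*_i})$ and asserts $S_{i,T}\subset W_n(\vec{x}^*)$ without spelling out, as you do, that this requires re-running Corollary \ref{SMT2} for the shifted sequence $\{\alpha_{k+T}\}_k$; your explicit treatment of this point is an improvement in clarity. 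Likewise, your closing remark about global non-singularity of $D_{\vec x}g(k,\cdot)$ is a genuine issue that the paper glosses over with the phrase ``image of zero-measure set under diffeomorphism is of measure $0$''; as you note, this is really verified method-by-method in Section~5 rather than following from the hypotheses of Corollary \ref{SMT2} alone.
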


\begin{proof}
 For each $\vec{x}^*\in \mathcal{A}^*$, there is an associated open neighborhood $U_{\vec{x}^*}$ promised by the Corollary \ref{SMT2}. $\bigcup_{\vec{x}^*\in \mathcal{A}^*}U_{\vec{x}^*}$ forms an open cover, and since $\mathbb{R}^d$ (more generally, any manifold) is second-countable we can find a countable subcover, so that $\bigcup_{\vec{x}^*\in \mathcal{A}^*}U_{\vec{x}^*}=\bigcup_{i=1}^{\infty}U_{\vec{x}^*_i}$
\\
By the definition of global stable set, we have
\[
W^s(\mathcal{A}^*)=\{\vec{x}_0:\lim_{k\rightarrow\infty}\tilde{g}(k,0,\vec{x}_0)\in\mathcal{A}^*\}.
\]
Fix a point $\vec{x}_0\in W^s(\mathcal{A}^*)$. Since $\tilde{g}(k,0,\vec{x}_0)\rightarrow\vec{x}^*\in\mathcal{A}^*$, there exists some nonnegative integer $T$ and all $t\ge T$, such that
$
\tilde{g}(t,0,\vec{x}_0)\in\bigcup_{\vec{x}^*\in\mathcal{A}^*}U_{\vec{x}^*}=\bigcup_{i=1}^{\infty}U_{\vec{x}^*_i}.
$
So $\tilde{g}(t,0,\vec{x}_0)\in U_{\vec{x}^*_i}$ for some $\vec{x}^*_i\in\mathcal{A}^*$ and all $t\ge T$. This is equivalent to
$
\tilde{g}(T+k,T,\tilde{g}(T,0,\vec{x}_0))\in U_{\vec{x}^*_i}
$
for all $k\ge 0$, and this implies that
$
\tilde{g}(T,0,\vec{x}_0)\in\tilde{g}^{-1}(T+k,T,U_{\vec{x}^*_i})
$
for all $k\ge 0$. And then we have
$
\tilde{g}(T,0,\vec{x}_0)\in\bigcap_{k=0}^{\infty}\tilde{g}^{-1}(T+k,T,U_{\vec{x}^*_i}).
$
Denote $S_{i,T}:=\bigcap_{k=0}^{\infty}\tilde{g}^{-1}(T+k,T,U_{\vec{x}^*_i})$
and the above relation is equivalent to
$\vec{x}_0\in \tilde{g}^{-1}(T,0,S_{i,T})$.
Take the union for all nonnegative integers $T$, we have
$
\vec{x}_0\in\bigcup_{T=0}^{\infty}\tilde{g}^{-1}(T,0,S_{i,T}).
$
And union for all $i$ we obtain that
$
\vec{x}_0\in \bigcup_{i=1}^{\infty}\bigcup_{T=0}^{\infty}\tilde{g}^{-1}(T,0,S_{i,T})
$
implying that
$
W^s(\mathcal{A}^*)\subset\bigcup_{i=1}^{\infty}\bigcup_{T=0}^{\infty}\tilde{g}^{-1}(T,0,S_{i,T}).
$
Since $S_{i,T}\subset W_n(\vec{x}^*)$ from Corollary \ref{SMT2}, and $W_n(\vec{x}^*)$ has codimension at least 1. This implies that $S_{i,T}$ has measure 0. Since the image of set zero-measure set under diffeomorphism is of measure 0, and countable union of zero-measure sets is of measure 0, we obtain $W^s(\mathcal{A}^*)$ is of measure 0.
\end{proof}

\section{Applications}
In this section, we apply Theorem \ref{SMT} (or its corollary \ref{SMT2}) to the four of the most commonly used first-order methods and we prove that each one of them avoids strict saddle points even with vanishing stepsize $\alpha_k$ of order $\Omega\left(\frac{1}{k}\right)$. We also assume that the Hessian $\nabla^2 f(\vec{x}^*)$ is invertible.
\subsection{Gradient Descent}
Let $f(\vec{x}):\mathbb{R}^d\rightarrow\mathbb{R}$ be a real-valued $C^2$ function, and $g(k,\vec{x})=\vec{x}-\alpha_k\nabla f(\vec{x})$ be the update rule of gradient descent, where $\{\alpha_k\}_{k\in\mathbb{N}}$ is a sequence of positive real numbers. %of order $\Omega\left(\frac{1}{k}\right)$ that converges to zero.
Then
\begin{equation}\label{eq:GD}
\vec{x}_{k+1}=\vec{x}_k-\alpha_k\nabla f(\vec{x}_k)
\end{equation}
is a time non-homogeneous dynamical system.
%\\
%\textbf{Assumption}: $f$ has locally bounded third derivatives, but not necessarily continuous.

\begin{theorem}\label{measure 0:GD}
Let $\vec{x}_{k+1}=g(k,\vec{x}_k)$ be the gradient descent algorithm defined by equation \ref{eq:GD}, and $\{\alpha_k\}_{k\in\mathbb{N}}$ be a sequence of positive real numbers of order $\Omega\left(\frac{1}{k}\right)$ that converges to zero. Then the stable set of strict saddle points has Lebesgue measure zero.
\end{theorem}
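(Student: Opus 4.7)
The plan is to verify the hypotheses of Corollary \ref{SMT2} at every strict saddle of $f$ and then invoke Corollary \ref{measure 0} with $\mathcal{A}^* = \mathcal{X}^*$. Fix a strict saddle $\vec{x}^* \in \mathcal{X}^*$, so $\nabla f(\vec{x}^*) = \vec{0}$ and $\nabla^2 f(\vec{x}^*)$ has at least one strictly negative eigenvalue. Because $g(k,\vec{x}^*) = \vec{x}^* - \alpha_k \nabla f(\vec{x}^*) = \vec{x}^*$ for every $k$, the point $\vec{x}^*$ is a fixed point of the time non-homogeneous dynamical system in the sense of (\ref{eq:dyn2}).

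Next I would expand $g$ around $\vec{x}^*$ in the form (\ref{eq:dyn3}):
\[
g(k,\vec{x}) = \vec{x}^* + \bigl(I - \alpha_k \nabla^2 f(\vec{x}^*)\bigr)(\vec{x}-\vec{x}^*) + \theta(k,\vec{x}),
\]
with $\theta(k,\vec{x}) = -\alpha_k\bigl[\nabla f(\vec{x}) - \nabla^2 f(\vec{x}^*)(\vec{x}-\vec{x}^*)\bigr]$. Setting $G = \nabla^2 f(\vec{x}^*)$, condition~(1) of Corollary \ref{SMT2} is immediate: $G$ is symmetric, hence real diagonalizable, and it has at least one negative eigenvalue by the strict-saddle hypothesis.

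The only nontrivial step is to verify the Lipschitz-type condition (\ref{condition2}) \emph{uniformly in $k$}. By the fundamental theorem of calculus applied to $\nabla f$,
\[
\theta(k,\vec{u}_1) - \theta(k,\vec{u}_2) = -\alpha_k \int_0^1 \bigl[\nabla^2 f\bigl(\vec{u}_2 + t(\vec{u}_1-\vec{u}_2)\bigr) - \nabla^2 f(\vec{x}^*)\bigr]\,(\vec{u}_1-\vec{u}_2)\,dt.
\]
Since $f \in C^2$, the map $\vec{u} \mapsto \nabla^2 f(\vec{u})$ is continuous at $\vec{x}^*$, so given any $\epsilon > 0$ I can choose $\delta > 0$ with $\norm{\nabla^2 f(\vec{u}) - \nabla^2 f(\vec{x}^*)} < \epsilon$ for all $\vec{u} \in \mathbb{B}(\vec{x}^*,\delta)$. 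Taking norms in the display yields $\norm{\theta(k,\vec{u}_1) - \theta(k,\vec{u}_2)} \le \alpha_k \epsilon \norm{\vec{u}_1 - \vec{u}_2}$ for every $\vec{u}_1, \vec{u}_2 \in \mathbb{B}(\vec{x}^*,\delta)$ and every $k \in \mathbb{N}$. The uniformity is automatic because $\alpha_k$ factors out of $\theta$; the rest is only a modulus of continuity of the Hessian, independent of $k$.

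Corollary \ref{SMT2} therefore supplies, for each strict saddle $\vec{x}^*$, a neighborhood $U_{\vec{x}^*}$ and a $C^1$ manifold $W(\vec{x}^*) \subset U_{\vec{x}^*}$ of codimension at least one containing the local stable set. Moreover, $D_{\vec{x}} g(k,\vec{x}^*) = I - \alpha_k \nabla^2 f(\vec{x}^*)$ has eigenvalue $1 + \alpha_k \abs{\lambda_{\min}(\nabla^2 f(\vec{x}^*))} > 1$ for every $k$, so every strict saddle lies in the set $\mathcal{A}^*$ of Corollary \ref{measure 0}. That corollary then gives $W^s(\mathcal{X}^*) \subset W^s(\mathcal{A}^*)$ of Lebesgue measure zero, concluding the proof. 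The main obstacle is really just the uniform-in-$k$ verification of (\ref{condition2}); beyond that, the argument is a mechanical check of the hypotheses of the abstract stable manifold machinery developed earlier.
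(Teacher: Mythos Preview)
Your proposal is correct and follows essentially the same route as the paper's proof: verify that $D_{\vec{x}}g(k,\vec{x}^*)=I-\alpha_k\nabla^2 f(\vec{x}^*)$ satisfies condition~1 of Corollary~\ref{SMT2}, use continuity of $\nabla^2 f$ together with the fundamental theorem of calculus to obtain the uniform-in-$k$ Lipschitz estimate~(\ref{condition2}) on $\theta$, and then invoke Corollary~\ref{measure 0}. The only cosmetic difference is that the paper first computes $D_{\vec{x}}\theta(k,\vec{x})=-\alpha_k\bigl(\nabla^2 f(\vec{x})-\nabla^2 f(\vec{x}^*)\bigr)$ and applies the FTC to $\theta$, whereas you apply the FTC directly to $\nabla f$; the two computations are equivalent.
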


\begin{proof}
We need to verify that the Taylor expansion of $g(k,\vec{x})$ at $\vec{x}^*$ satisfies the conditions of Corollary \ref{SMT2}. Condition 1 is obvious since the Hessian $\nabla^2 f(\vec{x}^*)$ is diagonalizable and has at least one negative eigenvalue. It suffice to verify condition 2. Consider the Taylor expansion of $g(k,\vec{x})$ in a neighborhood $U$ of $\vec{x}^*$:
\begin{align*}
g(k,\vec{x})&=g(k,\vec{x}^*)+D_{\vec{x}}g(k,\vec{x}^*)(\vec{x}-\vec{x}^*)+\theta(k,\vec{x})
\\
&=\vec{x}^*+(I-\alpha_k\nabla^2f(\vec{x}^*))(\vec{x}-\vec{x}^*)+\theta(k,\vec{x})
%\\
%&=\vec{x}^*+(\vec{x}-\vec{x}^*)-\alpha_k\nabla^2f(\vec{x}^*)(\vec{x}-\vec{x}^*)+\theta(k,\vec{x})
\\
&=\vec{x}-\alpha_k\nabla^2f(\vec{x}^*)(\vec{x}-\vec{x}^*)+\theta(k,\vec{x}).
\end{align*}
So we can write
\[
\theta(k,\vec{x})=g(k,\vec{x})-\vec{x}+\alpha_k\nabla^2f(\vec{x}^*)(\vec{x}-\vec{x}^*),
\]
 and then the differential of $\theta(k,\vec{x})$ with respect to $\vec{x}$ is
\[
D_{\vec{x}}\theta(k,\vec{x})=D_{\vec{x}}(g(k,\vec{x})-\vec{x})+\alpha_k\nabla^2f(\vec{x}^*)
=-\alpha_k\nabla^2f(\vec{x})+\alpha_k\nabla^2f(\vec{x}^*).\label{eq:D of theta}
\]
From the Fundamental Theorem of Calculus and chain rule for multivariable functions, we have
\[
\theta(k,\vec{x})-\theta(k,\vec{y})=\int_0^1\frac{d}{dt}\theta(k,t\vec{x}+(1-t)\vec{y})dt=\int_0^1D_{\vec{z}}\theta(k,\vec{z})\vert_{\vec{z}=t\vec{x}+(1-t)\vec{y}}\cdot (\vec{x}-\vec{y})dt.
\]
By the assumption of $f$ being $C^2$, we have that $\nabla^2f(\vec{x})$ is continuous everywhere. And then for any given $\epsilon>0$, there exists a open ball $\mathbb{B}(\vec{x}^*)$ centering at $\vec{x}^*$, such that 
$
\norm{\nabla^2f(\vec{x})-\nabla^2f(\vec{x}^*)}\le\epsilon
$
 for all $\vec{x}\in\mathbb{B}(\vec{x}^*)$.
 And this implies that 
 $
 \norm{D_{\vec{x}}\theta(k,\vec{x})}\le\alpha_k\epsilon
 $
  for all $\vec{x}\in\mathbb{B}(\vec{x}^*)$. Since $t\vec{x}+(1-t)\vec{y}\in\mathbb{B}(\vec{x}^*)$ if $\vec{x},\vec{y}\in\mathbb{B}(\vec{x}^*)$, we have that 
 $
 \norm{D_{\vec{z}}\theta(k,\vec{z})\vert_{\vec{z}=t\vec{x}+(1-t)\vec{y}}}\le\alpha_k\epsilon
 $
  for all $t\in [0,1]$. By Cauchy-Schwarz inequality, we have
\begin{align*}
\norm{\theta(k,\vec{x})-\theta(k,\vec{y})}&=\norm{\int_0^1D_{\vec{z}}\theta(k,\vec{z})\vert_{\vec{z}=t\vec{x}+(1-t)\vec{y}}\cdot (\vec{x}-\vec{y})dt}
\\
&\le\left(\int_0^1\norm{D_{\vec{z}}\theta(k,\vec{z})\vert_{\vec{z}=t\vec{x}+(1-t)\vec{y}}}dt\right)\cdot\norm{\vec{x}-\vec{y}} 
\\
&=\alpha_k\epsilon\norm{\vec{x}-\vec{y}},
\end{align*}
the verification completes. By Corollary \ref{SMT2} and Corollary \ref{measure 0}, we conclude that the stable set of strict saddle points has Lebesgue measure zero. \end{proof}

\subsection{Mirror Descent}
We consider mirror descent algorithm in this section. Let $\mathbf{D}$ be a convex open subset of $\mathbb{R}^d$, and $M=\mathbf{D}\cap\mathbf{A}$ for some affine space $\mathbf{A}$. Given a function $f:M\rightarrow\mathbb{R}$ and a mirror map $\Phi$, the mirror descent algorithm with vanishing step-size is defined as
\begin{equation}\label{eq:MD}
\vec{x}_{k+1}=g(k,\vec{x}_k):=h(\nabla\Phi(\vec{x}_k)-\alpha_k\nabla f(\vec{x}_k)),
\end{equation}
where $h(\vec{x})=\text{argmax}_{\vec{z}\in M} \langle \vec{z},\vec{x}\rangle-\Phi(\vec{z})$.
\begin{definition}[Mirror Map]
We say that $\Phi$ is a mirror map if it satisfies the following properties.
\begin{itemize}
\item $\Phi:\mathbf{D}\rightarrow\mathbb{R}$ is $C^2$ and strictly convex.
\item The gradient of $\Phi$ is surjective onto $\mathbb{R}^d$, that is $\nabla\Phi(D)=\mathbb{R}^d$.
\item $\nabla_R\Phi$ diverges on the relative boundary of $M$, that is $\lim_{x\rightarrow\partial M}||\nabla_R\Phi(x)||=\infty$.
\end{itemize}
\end{definition}

\begin{theorem}\label{measure 0:MD}
Let $\vec{x}_{k+1}=g(k,\vec{x}_k)$ be the mirror descent algorithm defined by equation (\ref{eq:MD}), and $\{\alpha_k\}_{k\in\mathbb{N}}$ be a sequence of positive real numbers of order $\Omega\left(\frac{1}{k}\right)$ that converges to zero. Then the stable set of strict saddle points has Lebesgue measure zero.
\end{theorem}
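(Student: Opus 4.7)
The plan is to verify the hypotheses of Corollary \ref{SMT2} at every strict saddle of $f|_M$, and then invoke Corollary \ref{measure 0}, exactly mirroring the structure of Theorem \ref{measure 0:GD}. The effective state space is the affine slice $M = \mathbf{D}\cap\mathbf{A}$, so all derivatives and differentials below should be read as acting on the tangent space $T\mathbf{A}$ (in local coordinates on $\mathbf{A}$, the hypotheses of Corollary \ref{SMT2} apply verbatim).

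First I would show that every strict saddle $\vec{x}^*$ is a fixed point of $g(k,\cdot)$ for all $k$. The first-order optimality condition defining $h$ is that $\vec{y}-\nabla\Phi(h(\vec{y}))$ is normal to $\mathbf{A}$; in particular $h(\nabla\Phi(\vec{x}))=\vec{x}$ for every $\vec{x}\in M$. At a critical point of $f$ restricted to $M$, $\nabla f(\vec{x}^*)$ is normal to $\mathbf{A}$, so $\nabla\Phi(\vec{x}^*)-\alpha_k\nabla f(\vec{x}^*)$ differs from $\nabla\Phi(\vec{x}^*)$ by a normal vector and is again mapped by $h$ to $\vec{x}^*$. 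Hence $g(k,\vec{x}^*)=\vec{x}^*$ for all $k$.

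Next I would compute the linearization and check condition 1 of Corollary \ref{SMT2}. By Legendre duality, since $\Phi$ is strictly convex $C^2$ and $\nabla\Phi$ is a diffeomorphism onto $\mathbb{R}^d$, $h$ is $C^1$ and satisfies $Dh(\vec{y}) = [\nabla^2\Phi(h(\vec{y}))]^{-1}$ on $T\mathbf{A}$. The chain rule applied to $g(k,\vec{x})=h(\nabla\Phi(\vec{x})-\alpha_k\nabla f(\vec{x}))$ then yields
\[
D_{\vec{x}}g(k,\vec{x}^*) = [\nabla^2\Phi(\vec{x}^*)]^{-1}\bigl(\nabla^2\Phi(\vec{x}^*)-\alpha_k\nabla^2 f(\vec{x}^*)\bigr) = I - \alpha_k G,
\]
with $G=[\nabla^2\Phi(\vec{x}^*)]^{-1}\nabla^2 f(\vec{x}^*)$. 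Setting $P=[\nabla^2\Phi(\vec{x}^*)]^{1/2}$ (well defined since $\nabla^2\Phi\succ 0$), $G$ is similar to the symmetric matrix $P^{-1}\nabla^2 f(\vec{x}^*)P^{-1}$, hence diagonalizable; by Sylvester's law of inertia this matrix has the same signature as the Riemannian Hessian of $f$ at $\vec{x}^*$, so the strict-saddle hypothesis produces at least one negative eigenvalue of $G$.

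The crux is verifying condition 2, namely the $\alpha_k\epsilon$-Lipschitz bound on the Taylor remainder $\theta(k,\vec{x})=g(k,\vec{x})-\vec{x}^*-(I-\alpha_k G)(\vec{x}-\vec{x}^*)$. Since $g(k,\vec{x}^*)=\vec{x}^*$ and $D_{\vec{x}}g(k,\vec{x}^*)=I-\alpha_k G$, one has $D_{\vec{x}}\theta(k,\vec{x})=D_{\vec{x}}g(k,\vec{x})-D_{\vec{x}}g(k,\vec{x}^*)$. Introduce $F(\alpha,\vec{x})=Dh(\nabla\Phi(\vec{x})-\alpha\nabla f(\vec{x}))\cdot(\nabla^2\Phi(\vec{x})-\alpha\nabla^2 f(\vec{x}))$, so that $D_{\vec{x}}g(k,\vec{x})=F(\alpha_k,\vec{x})$. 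A direct computation shows $F(0,\vec{x})= [\nabla^2\Phi(\vec{x})]^{-1}\nabla^2\Phi(\vec{x})=I$ identically in $\vec{x}$, hence
\[
F(\alpha_k,\vec{x}) - F(\alpha_k,\vec{x}^*) = \int_0^{\alpha_k}\!\bigl(\partial_\alpha F(s,\vec{x})-\partial_\alpha F(s,\vec{x}^*)\bigr)\,ds.
\]
The integrand is continuous in $(s,\vec{x})$ (since $\Phi,f\in C^2$ and $h\in C^1$ with $C^1$ derivative via Legendre duality), and vanishes at $\vec{x}=\vec{x}^*$. Given $\epsilon>0$, choose a ball $\mathbb{B}(\vec{x}^*,\delta)$ on which $\sup_{s\in[0,\bar\alpha]}\|\partial_\alpha F(s,\vec{x})-\partial_\alpha F(s,\vec{x}^*)\|\le\epsilon$ (with $\bar\alpha$ an upper bound on the bounded sequence $\{\alpha_k\}$). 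Then $\|D_{\vec{x}}\theta(k,\vec{x})\|\le\alpha_k\epsilon$ on this ball uniformly in $k$, and the fundamental-theorem-of-calculus argument used in Theorem \ref{measure 0:GD} yields $\|\theta(k,\vec{u}_1)-\theta(k,\vec{u}_2)\|\le\alpha_k\epsilon\|\vec{u}_1-\vec{u}_2\|$ on $\mathbb{B}(\vec{x}^*,\delta)$. With both conditions of Corollary \ref{SMT2} verified, Corollary \ref{measure 0} applies to the dynamical system (\ref{eq:MD}) and gives that the global stable set of strict saddles has Lebesgue measure zero. The main obstacle I anticipate is the careful use of Legendre duality to justify the identity $Dh=[\nabla^2\Phi\circ h]^{-1}$ and its $C^1$ dependence on base points, all while restricting everything cleanly to $T\mathbf{A}$ so that the eigenvalue counting in step 2 matches the Riemannian notion of strict saddle.
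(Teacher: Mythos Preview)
Your plan matches the paper's at the structural level: verify the two hypotheses of Corollary~\ref{SMT2} at each strict saddle of $f|_M$ and then apply Corollary~\ref{measure 0}. Your verification of condition~1 (writing $G=[\nabla^2\Phi(\vec{x}^*)]^{-1}\nabla^2 f(\vec{x}^*)$ and conjugating by $[\nabla^2\Phi(\vec{x}^*)]^{1/2}$ to obtain a symmetric matrix with a negative eigenvalue) is exactly what the paper does, except that the paper simply cites Proposition~10 of \cite{LPP} for this step.

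The genuine difference is in condition~2. The paper asserts directly that
\[
D_{\vec{x}}\theta(k,\vec{x})=-\alpha_k\nabla^2_{R}\Phi(\vec{x})^{-1}\nabla^2_Rf(\vec{x})+\alpha_k\nabla^2_{R}\Phi(\vec{x}^*)^{-1}\nabla^2_Rf(\vec{x}^*),
\]
so the factor $\alpha_k$ is already explicit and the continuity argument from Theorem~\ref{measure 0:GD} finishes immediately. Your device---observe $F(0,\vec{x})\equiv I$ and integrate $\partial_\alpha F$ from $0$ to $\alpha_k$---is a different way to manufacture the $\alpha_k$ factor. It is arguably more honest, because the paper's displayed identity tacitly treats $Dh$ evaluated at $\nabla\Phi(\vec{x})-\alpha_k\nabla f(\vec{x})$ as $[\nabla^2\Phi(\vec{x})]^{-1}$ rather than $[\nabla^2\Phi(g(k,\vec{x}))]^{-1}$; the discrepancy is itself $O(\alpha_k)$, so the conclusion survives, but the paper does not spell this out. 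On the other hand your integral argument, as written, costs an extra derivative: forming $\partial_\alpha F$ requires $Dh\in C^1$, i.e.\ effectively $\Phi\in C^3$, whereas the paper (and Definition of mirror map) only assumes $\Phi\in C^2$. If you want to stay at $C^2$, replace the $\alpha$-derivative by a direct continuity estimate on $F(\alpha_k,\vec{x})-F(\alpha_k,\vec{x}^*)$ using $\|g(k,\vec{x})-\vec{x}\|=O(\alpha_k)$; that recovers the paper's conclusion without the extra regularity.
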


\begin{proof}
%The proof of Proposition 10, \cite{LPP} extends to our case that $\alpha_k$ is not constant and this implies $\mathcal{X}^*\subset\mathcal{A}^*$.
Let $U\subset\mathbb{R}^d$ be an open ball centering at $\vec{x}^*$,the Taylor expansion of $g(k,\vec{x})$ in $U\cap M$ is of the form
  \[
  g(k,\vec{x})=g(k,\vec{x}^*)+(Id_{T_{\vec{x}^*}M}-\alpha_k\nabla^2_{R}\Phi(\vec{x}^*)^{-1}\nabla^2_Rf(\vec{x}^*))(\vec{x}-\vec{x}^*)+\theta(k,\vec{x}).
  \]
The fact that $g(k,\vec{x})$ satisfies the condition 1 of Corollary \ref{SMT2} follows from the proof of Proposition 10, \cite{LPP}, i.e. $\nabla^2_{R}\Phi(\vec{x}^*)^{-1}\nabla^2_Rf(\vec{x}^*)$ is similar to a symmetric linear operator (so diagonalizable) with at least one negative eigenvalue.
\\ 
  Next, we verify that $g(k,\vec{x})$ satisfies the condition 2 of Corollary \ref{SMT2}. From the Taylor expansion, we have
  \[
  \theta(k,\vec{x})=g(k,\vec{x})-\vec{x}^*-(Id_{T_{\vec{x}^*}M}-\alpha_k\nabla^2_{R}\Phi(\vec{x}^*)^{-1}\nabla^2_Rf(\vec{x}^*))(\vec{x}-\vec{x}^*),
  \]
  and
  \[
  D_{\vec{x}}\theta(k,\vec{x})=-\alpha_k\nabla^2_{R}\Phi(\vec{x})^{-1}\nabla^2_Rf(\vec{x})+\alpha_k\nabla^2_{R}\Phi(\vec{x}^*)^{-1}\nabla^2_Rf(\vec{x}^*).
  \]
  By the continuity of $\nabla^2_Rf$ and $\nabla^2_{R}\Phi(\vec{x})^{-1}$, the same argument as the proof of Theorem \ref{measure 0:GD} implies that the condition 2 of Corollary \ref{SMT2} is satisfied. Combing Corollary \ref{SMT2} and Corollary \ref{measure 0}, we conclude that the stable set of saddle points has Lebesgue measure zero.  
\end{proof}

\subsection{Proximal Point}
The proximal point algorithm is given by the iteration
\begin{equation}\label{eq:PP}
\vec{x}_{k+1}=g(k,\vec{x}_k):=\text{arg}\min_{\vec{z}}f(\vec{z})+\frac{1}{2\alpha_k}\norm{\vec{x}_k-\vec{z}}^2.
\end{equation}

\begin{theorem}\label{measure 0:PP}
Let $\vec{x}_{k+1}=g(k,\vec{x}_k)$ be the proximal point algorithm defined by equation (\ref{eq:PP}), and $\{\alpha_k\}_{k\in\mathbb{N}}$ be a sequence of positive real numbers of order $\Omega\left(\frac{1}{k}\right)$ that converges to zero. Then the stable set of strict saddle points has Lebesgue measure zero.
\end{theorem}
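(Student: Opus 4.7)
The plan is to mimic the proofs of Theorems \ref{measure 0:GD} and \ref{measure 0:MD}: verify the hypotheses of Corollary \ref{SMT2} at each strict saddle $\vec{x}^*$ and then invoke Corollary \ref{measure 0}. First I would extract a workable expression for $g(k,\cdot)$ from the first-order optimality condition for (\ref{eq:PP}), which reads $g(k,\vec{x}) = \vec{x} - \alpha_k\nabla f(g(k,\vec{x}))$. Because $\alpha_k\to 0$, for all sufficiently large $k$ the map $\vec{z}\mapsto\vec{z}+\alpha_k\nabla f(\vec{z})$ is a $C^1$-diffeomorphism on a fixed neighborhood of $\vec{x}^*$, so by the inverse function theorem $g(k,\cdot)$ is a well-defined $C^1$ map with $g(k,\vec{x}^*)=\vec{x}^*$ and, writing $H:=\nabla^2 f(\vec{x}^*)$,
\[
D_{\vec{x}}g(k,\vec{x}) = (I+\alpha_k\nabla^2 f(g(k,\vec{x})))^{-1}, \qquad D_{\vec{x}}g(k,\vec{x}^*) = (I+\alpha_k H)^{-1}.
\]

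The main obstacle is that $D_{\vec{x}}g(k,\vec{x}^*)$ has the resolvent form $(I+\alpha_k H)^{-1}$ rather than the affine form $I-\alpha_k G$ demanded by Corollary \ref{SMT2}. I would sidestep this by choosing $G=H$ and pushing the linearization mismatch into the remainder, writing
\[
g(k,\vec{x}) = \vec{x}^* + (I-\alpha_k H)(\vec{x}-\vec{x}^*) + \theta(k,\vec{x}).
\]
Condition 1 of Corollary \ref{SMT2} is then immediate: $H$ is symmetric (hence diagonalizable) and has at least one strictly negative eigenvalue because $\vec{x}^*$ is a strict saddle. To verify Condition 2 I would estimate $D_{\vec{x}}\theta$ in the style of Theorem \ref{measure 0:GD}. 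A short algebraic manipulation gives
\[
D_{\vec{x}}\theta(k,\vec{x}) = D_{\vec{x}}g(k,\vec{x}) - (I-\alpha_k H) = \alpha_k(I+\alpha_k J)^{-1}\left[(H-J)+\alpha_k J H\right],
\]
where $J := \nabla^2 f(g(k,\vec{x}))$. For $\vec{x}$ in a sufficiently small ball $\mathbb{B}(\vec{x}^*,\delta)$ and $k\ge K$ sufficiently large, each factor can be controlled: $\norm{(I+\alpha_k J)^{-1}}$ by the uniform invertibility of $I+\alpha_k J$ for small $\alpha_k$, $\norm{H-J}$ by the continuity of $\nabla^2 f$ together with $g(k,\vec{x})\to\vec{x}^*$ as $\vec{x}\to\vec{x}^*$, and $\alpha_k\norm{J}\norm{H}$ by $\alpha_k\to 0$. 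This yields $\norm{D_{\vec{x}}\theta(k,\vec{x})}\le\alpha_k\epsilon$, and integrating along the segment joining any two points in $\mathbb{B}(\vec{x}^*,\delta)$ (as in Theorem \ref{measure 0:GD}) delivers the required bound $\norm{\theta(k,\vec{u}_1)-\theta(k,\vec{u}_2)}\le\alpha_k\epsilon\norm{\vec{u}_1-\vec{u}_2}$ for all $k\ge K$.

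The finitely many early iterations $k<K$ (where $\alpha_k$ may not yet be small enough to bound the bracketed term uniformly in $k$) are absorbed by a preimage argument: each $g(k,\cdot)$ is a local $C^1$-diffeomorphism near $\vec{x}^*$, so the finite composition $\tilde{g}(K-1,0,\cdot)$ preserves Lebesgue null sets under preimage. Applying Corollary \ref{SMT2} to the tail dynamical system starting at time $K$ therefore produces a local $C^1$-stable manifold of codimension at least one at $\vec{x}^*$, and Corollary \ref{measure 0} (pulled back through the first $K$ diffeomorphisms) then delivers the desired conclusion that the stable set of strict saddle points has Lebesgue measure zero.
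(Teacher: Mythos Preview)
Your plan has a genuine gap in the verification of Condition~2. By forcing the linear part to be $I-\alpha_kH$ rather than the true Jacobian $(I+\alpha_kH)^{-1}$, your remainder acquires the extra term $\alpha_k^2(I+\alpha_kJ)^{-1}JH$. Even at $\vec{x}=\vec{x}^*$ (where $J=H$) this term has norm of order $\alpha_k^2\norm{H}^2$, so the Lipschitz constant of $\theta(k,\cdot)$ on \emph{any} ball around $\vec{x}^*$ is at least $c\,\alpha_k^2\norm{H}^2$, and the inequality $\norm{D_{\vec{x}}\theta(k,\vec{x})}\le\alpha_k\epsilon$ can hold only when $\alpha_k\lesssim\epsilon/\norm{H}^2$. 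Thus the cutoff $K$ in your tail argument necessarily depends on $\epsilon$: for every fixed $K$ there are values of $\epsilon$ (any $\epsilon<c\,\alpha_K\norm{H}^2$) for which no choice of $\delta$ works at $k=K$, so the ``for each $\epsilon>0$'' hypothesis of Corollary~\ref{SMT2} fails on every fixed tail. The preimage trick cannot absorb an $\epsilon$-dependent number of iterations, and tracking the specific $\epsilon$ used inside Lemma~\ref{contraction} (namely $\epsilon<\alpha_K\lambda/(K_1+K_2)$ for the tail at $K$) does not help either: combined with $\alpha_K\norm{H}^2\lesssim\epsilon$ it would force $\norm{H}^2\lesssim\lambda/(K_1+K_2)$, a constraint on $H$ that generally fails.

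The paper sidesteps this by \emph{not} coercing the linear part into the form $I-\alpha_kG$. It keeps the true linearization $D_{\vec{x}}g(k,\vec{x}^*)=(I+\alpha_kH)^{-1}$, so that
\[
D_{\vec{x}}\theta(k,\vec{x})=(I+\alpha_kJ)^{-1}-(I+\alpha_kH)^{-1}=\alpha_k(I+\alpha_kJ)^{-1}(H-J)(I+\alpha_kH)^{-1},
\]
which is genuinely of order $\alpha_k\norm{H-J}$ and therefore satisfies the required Lipschitz bound uniformly in $k$ once the resolvents are bounded. The price is that Corollary~\ref{SMT2} no longer applies verbatim; instead the paper goes back into the proof of Theorem~\ref{SMT}, writes $\frac{1}{1+\alpha_k\lambda_i}=1-\frac{\alpha_k\lambda_i}{1+\alpha_k\lambda_i}$, checks that these effective step sizes are still $\Omega(1/k)$, and concludes that Lemmas~\ref{boundedB} and~\ref{boundedC} (hence the whole contraction argument) carry over unchanged. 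Your proof can be repaired along exactly these lines, but not with the linear part $I-\alpha_kH$.
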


\begin{proof}
Different from the other First-order methods, the results is not a direct consequence of Corollary \ref{measure 0}, but instead, we need to apply part of the proof of Theorem \ref{SMT}. It is still necessary to verify that the Taylor expansion of $g(k,\vec{x})$ at $\vec{x}^*$ satisfies condition 1 and 2 of Corollary \ref{SMT2}.
From the proof of Proposition 3, \cite{LPP}, $g(k,\vec{x})+\alpha_k\nabla f(g(k,\vec{x}))=\vec{x}$. By implicit differentiation, $Dg(k,\vec{x})+\alpha_k\nabla^2f(g(k,\vec{x}))Dg(k,\vec{x})=I$, and
\[
Dg(k,\vec{x})=(I+\alpha_k\nabla^2f(g(k,\vec{x})))^{-1}.
\]
At saddle point $\vec{x}^*$, $Dg(k,\vec{x}^*)=(I+\alpha_k\nabla^2f(g(k,\vec{x}^*)))^{-1}$ that is diagonalizable since $\nabla^2f(\vec{x}^*)$ is diagonalizable. Suppose under the matrix $Q$, $Q\nabla^2f(\vec{x}^*)Q^{-1}=H$ that is diagonal. Then 
\begin{align}
Q(I+\alpha_k\nabla^2f(\vec{x}^*))^{-1}Q^{-1}&=\left(Q(I+\alpha_k\nabla^2f(\vec{x}^*))Q^{-1}\right)^{-1}
\\
&=\left(I+\alpha_kQ\nabla^2f(\vec{x}^*)Q^{-1}\right)^{-1}
\\
&=\left(I+\alpha_kH\right)^{-1}
\\
&=\text{diag}\{\frac{1}{1+\alpha_k\lambda_i}\},
\end{align}
where $\lambda_i$ are the eigenvalues of $H$. Notice that $
\frac{1}{1+\alpha_k\lambda_i}=1-\frac{\alpha_k\lambda_i}{1+\alpha_k\lambda_i}
$, the stable-unstable decomposition in the proof of Corollary \ref{SMT} holds. Furthermore, since $\alpha_k\in\Omega\left(\frac{1}{k}\right)$, $\frac{\alpha_k\lambda_i}{1+\alpha_k\lambda_i}$ is also of $\Omega\left(\frac{1}{k}\right)$. To see this, we can assume 
$
\alpha_k\lambda_i\ge\frac{1}{k-1}=\frac{1}{k}\cdot\frac{k}{k-1}
$, 
and then $\frac{k-1}{k}\alpha_k\lambda_i\ge\frac{1}{k}$ or $\left(1-\frac{1}{k}\right)\alpha_k\lambda_i\ge\frac{1}{k}$, and thus $\alpha_k\lambda_i\ge\frac{1}{k}(1+\alpha_k\lambda_i)$, implying that $\frac{\alpha_k\lambda_i}{1+\alpha_k\lambda_i}\ge\frac{1}{k}$. So the proof for Lemma \ref{boundedC} and Lemma \ref{boundedB} holds for the existence of stable manifold of proximal point algorithm if condition 2 of Corollary \ref{SMT2} is satisfied. To verify this, we consider the Taylor expansion of $g(k,\vec{x})$ at $\vec{x}^*$ has the form of
\begin{align*}
g(k,\vec{x})&=g(k,\vec{x}^*)+D_{\vec{x}}g(k,\vec{x}^*)(\vec{x}-\vec{x}^*)+\theta(k,\vec{x})
\\
&=\vec{x}^*+(I+\alpha_k\nabla^2f(g(k,\vec{x}^*)))^{-1}(\vec{x}-\vec{x}^*)+\theta(k,\vec{x}),
\end{align*}
and thus
\[
\theta(k,\vec{x})=g(k,\vec{x})-\vec{x}^*-(I+\alpha_k\nabla^2f(g(k,\vec{x}^*)))^{-1}(\vec{x}-\vec{x}^*).
\]
So the differential
\[
D_{\vec{x}}\theta(k,\vec{x})=(I+\alpha_k\nabla^2f(g(k,\vec{x})))^{-1}-(I+\alpha_k\nabla^2f(g(k,\vec{x}^*)))^{-1}.
\]
Since $f$ is of $C^2$, $g(k,\vec{x})$ and $\nabla^2f(\vec{x})$ are continuous, and then $\norm{\theta(k,\vec{x})-\theta(k,\vec{y})}\le\alpha_k\epsilon\norm{\vec{x}-\vec{y}}$ follows from the same argument as the proof of Theorem \ref{measure 0:GD}. So the verification completes and by Corollary \ref{SMT2} and Corollary \ref{measure 0}, we conclude that the stable set of strict saddle points is of Lebesgue measure zero.
\end{proof}

\subsection{Manifold Gradient Descent}
Let $M$ be a submanifold of $\mathbb{R}^d$, and $T_{\vec{x}}M$ be the tangent space of $M$ at $\vec{x}$. $P_M$ and $P_{T_{\vec{x}}M}$ be the orthogonal projector onto $M$ and $T_{\vec{x}}M$ respectively. Assume that $f:M\rightarrow \mathbb{R}$ is extendable to neighborhood of $M$ and let $\bar{f}$ be a smooth extension of $f$ to $\mathbb{R}^d$. Suppose that the Riemannian metric on $M$ is induced by Euclidean metric of $\mathbb{R}^d$, then the Riemannian gradient $\nabla_Rf(\vec{x})$ is the projection of the gradient of $f(\vec{x})$ on $\mathbb{R}^d$, i.e. $\nabla_Rf(\vec{x})=P_{T_{\vec{x}}M}\nabla f(\vec{x})$. Then the manifold gradient descent algorithm is:
\begin{equation}\label{MGD}
\vec{x}_{k+1}=g(k,\vec{x}_k):=P_M(\vec{x}_k-\alpha_kP_{T_{\vec{x}_k}M}\nabla f(\vec{x}_k)).
\end{equation}

\begin{theorem}\label{measure 0:MGD}
Let $\vec{x}_{k+1}=g(k,\vec{x}_k)$ be the manifold gradient descent defined by equation (\ref{MGD}), and $\{\alpha_k\}_{k\in\mathbb{N}}$ be a sequence of positive real numbers of order $\Omega\left(\frac{1}{k}\right)$ that converges to zero. Then the stable set of strict saddle points has measure zero.
\end{theorem}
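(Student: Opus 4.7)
The plan is to mirror the strategy of Theorems \ref{measure 0:GD}, \ref{measure 0:MD}, and \ref{measure 0:PP}: pass to a local chart around a strict saddle point $\vec{x}^*\in M$, express the manifold gradient descent update in that chart, and verify the two hypotheses of Corollary \ref{SMT2}. The measure-zero conclusion then follows from Corollary \ref{measure 0}.

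First I would fix a strict saddle point $\vec{x}^*\in M$ and choose a smooth local parametrization $\psi$ from a neighborhood of $\vec{0}$ in $T_{\vec{x}^*}M$ to a neighborhood of $\vec{x}^*$ in $M$, with $\psi(\vec{0})=\vec{x}^*$ and $D\psi(\vec{0})$ equal to the canonical inclusion $T_{\vec{x}^*}M\hookrightarrow\mathbb{R}^d$. Define the chart-expressed update $\hat{g}(k,\vec{u}):=\psi^{-1}(g(k,\psi(\vec{u})))$ on a small ball in $T_{\vec{x}^*}M$. Because $\vec{x}^*$ is a critical point, $P_{T_{\vec{x}^*}M}\nabla f(\vec{x}^*)=\vec{0}$, hence $g(k,\vec{x}^*)=\vec{x}^*$ for every $k$; equivalently, $\vec{u}^*=\vec{0}$ is a fixed point of $\hat{g}(k,\cdot)$ for every $k$. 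Reducing the problem to this chart is what lets us apply Corollary \ref{SMT2}, which is stated on $\mathbb{R}^d$.

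To verify condition 1, I would compute $D_{\vec{u}}\hat{g}(k,\vec{0})$ by the chain rule, using the standard computation for projected gradient descent on an embedded Riemannian submanifold (as in Proposition 12 of \cite{LPP}). The result is $I_{T_{\vec{x}^*}M}-\alpha_k\nabla^2_R f(\vec{x}^*)$, where $\nabla^2_R f(\vec{x}^*):T_{\vec{x}^*}M\to T_{\vec{x}^*}M$ is the Riemannian Hessian. Since $\vec{x}^*$ is a strict saddle, $\nabla^2_R f(\vec{x}^*)$ is symmetric, hence diagonalizable, and admits at least one negative eigenvalue; thus condition 1 of Corollary \ref{SMT2} holds with $G=\nabla^2_R f(\vec{x}^*)$. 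For condition 2, set $\theta(k,\vec{u}):=\hat{g}(k,\vec{u})-\hat{g}(k,\vec{0})-D_{\vec{u}}\hat{g}(k,\vec{0})\vec{u}$ and argue exactly as in the proof of Theorem \ref{measure 0:GD}: every non-identity contribution to $D_{\vec{u}}\hat{g}(k,\vec{u})$ carries the prefactor $\alpha_k$, and the maps $\psi$, $P_M$, $P_{T_{\vec{x}}M}$, and $\nabla f$ are all $C^1$ near $\vec{x}^*$, so applying the Fundamental Theorem of Calculus along the segment from $\vec{u}_1$ to $\vec{u}_2$ and shrinking the ball yields $\norm{\theta(k,\vec{u}_1)-\theta(k,\vec{u}_2)}\le\alpha_k\epsilon\norm{\vec{u}_1-\vec{u}_2}$.

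The main technical nuisance is controlling the second-order contributions of the projector $P_M$, which a priori could produce non-$\alpha_k$-scaled terms in the Taylor expansion of $\hat{g}(k,\cdot)$ at $\vec{0}$. The key point is that the argument of $P_M$ in the definition of $g(k,\vec{x})$ is $\vec{x}-\alpha_k P_{T_{\vec{x}}M}\nabla f(\vec{x})$, so at $\vec{x}=\vec{x}^*$ the perturbation of $P_M$ away from $\vec{x}^*$ is already $O(\alpha_k)$; combined with the critical-point identity $P_{T_{\vec{x}^*}M}\nabla f(\vec{x}^*)=\vec{0}$, this shows that the curvature of $M$ only contributes at order $\alpha_k$ to $D_{\vec{u}}\hat{g}(k,\vec{0})$ and at order $\alpha_k$ times a Lipschitz function of $\vec{u}$ to the remainder. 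Once both conditions of Corollary \ref{SMT2} are in hand, each strict saddle admits a local $C^1$ stable manifold of codimension at least one, and Corollary \ref{measure 0} assembles these local statements into the global conclusion that the stable set of strict saddle points under manifold gradient descent has Lebesgue measure zero.
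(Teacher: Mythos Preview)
Your proposal is correct and follows essentially the same route as the paper: identify the linearization at a strict saddle as $I-\alpha_k\nabla^2_R f(\vec{x}^*)$ (the paper cites Proposition~8 of \cite{LPP} and equation~4 of \cite{AMT} for this), verify conditions~1 and~2 of Corollary~\ref{SMT2} via continuity of the Riemannian Hessian and the Fundamental Theorem of Calculus argument from Theorem~\ref{measure 0:GD}, and conclude via Corollary~\ref{measure 0}. Your explicit passage to a chart $\psi$ and your discussion of why the curvature contribution of $P_M$ only enters at order $\alpha_k$ are more careful than the paper's somewhat informal ``Taylor expansion in the tangent space'' with $\vec{x}-\vec{x}^*\in T_{\vec{x}^*}M$, but the underlying argument is the same.
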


\begin{proof}
According to the proof of Proposition 8, \cite{LPP}, for $\vec{v}\in T_{\vec{x}^*}M$, 
\[
D_{\vec{x}}g(k,\vec{x}^*)\vec{v}=P_{T_{\vec{x}^*}M}\vec{v}-\alpha_kP_{T_{\vec{x}^*}M}D(P_{T_{\vec{x}^*}M}\nabla\bar{f})(\vec{x}^*)\vec{v}.
\]
Let $\vec{x}-\vec{x}^*\in T_{\vec{x}^*}M$, the Taylor expansion in the tangent space can be written as
\[
g(k,\vec{x})=g(k,\vec{x}^*)+P_{T_{\vec{x}^*}M}(\vec{x}-\vec{x}^*)-\alpha_kP_{T_{\vec{x}^*}M}D(P_{T_{\vec{x}^*}M}\nabla\bar{f})(\vec{x}^*)(\vec{x}-\vec{x}^*)+\theta(k,\vec{x}).
\]
Using equation 4, \cite{AMT}, $P_{T_{\vec{x}}M}D(P_{T_{\vec{x}}M}\nabla\bar{f})(\vec{x})=\nabla^2_R f(\vec{x})$, which is exactly the Riemannian Hessian, and thus it is diagonalizable. So this verifies the condition 1 of Corollary \ref{SMT2}. Furthermore, the Taylor expansion gives
\[
\theta(k,\vec{x})=g(k,\vec{x})-\vec{x}^*-P_{T_{\vec{x}^*}M}(\vec{x}-\vec{x}^*)+\alpha_kP_{T_{\vec{x}^*}M}D(P_{T_{\vec{x}^*}M}\nabla\bar{f})(\vec{x}^*)(\vec{x}-\vec{x}^*),
\]
and then
\begin{align*}
D_{\vec{x}}\theta(k,\vec{x})&=D_{\vec{x}}g(k,\vec{x})-P_{T_{\vec{x}^*}M}+\alpha_kP_{T_{\vec{x}^*}M}D(P_{T_{\vec{x}^*}M}\nabla\bar{f})(\vec{x}^*).
%\\
%&=P_{T_{\vec{x}}M}-\alpha_kP_{T_{\vec{x}^*}M}D(P_{T_{\vec{x}}M}\nabla\bar{f})(\vec{x})-P_{T_{\vec{x}^*}M}+\alpha_kP_{T_{\vec{x}^*}M}D(P_{T_{\vec{x}^*}M}\nabla\bar{f})(\vec{x}^*).
\end{align*}
The continuity of $\nabla^2f$ implies that for each $\epsilon>0$, there exist neighborhood of $\vec{x}^*$, such that $\norm{D_{\vec{x}}\theta(k,\vec{x})}\le\epsilon$. Apply the argument in the proof of Theorem \ref{measure 0:GD} (Fundamental Theorem of Calculus and Cauchy-Schwarz inequality), we can conclude that condition 2 of Corollary \ref{SMT2} is satisfied. then combing with Corollary \ref{measure 0}, we have that the stable set of strict saddle points has measure (induced by metric $R$) zero.
\end{proof}

For the case when $M$ is not a submanifold of $\mathbb{R}^d$, the manifold gradient descent algorithm depends on the Riemannian metric $R$ defined intrinsically, i.e., $R$ is not induced by any ambient metric.
%Let $M$ be an $d$-dimensional manifold with Riemannian metric given by a positive definite quadratic form $R(\cdot,\cdot)$ on $M$.
Given $f:M\rightarrow\mathbb{R}$, the Riemannian gradient $\nabla_Rf$ is defined to be the unique vector field such that $R(\nabla_Rf,X)=\partial_Xf$ for all vector field $X$ on $M$. In local coordinate systems $\vec{x}(p)=(x_1,...,x_d)$, $p\in M$, the Riemannian gradient is written as
$\nabla_Rf(\vec{x})=\left(R^{1j}\frac{\partial f}{\partial x_j},..., R^{dj}\frac{\partial f}{\partial x_j}\right)=\left(R^{ij}\right)\cdot\nabla f(\vec{x}),
$
where $\left(R^{ij}\right)$ is the inverse of the metric matrix at the point $\vec{x}$ and $R^{ij}\frac{\partial f}{\partial x_j}=\sum_jR^{ij}\frac{\partial f}{\partial x_j}$ as the Einstein convention. Then the update rule (in a local coordinate system) is
\begin{equation}\label{eq:IMGD}
\vec{x}_{k+1}=g(k,\vec{x}_k):=\vec{x}_k-\alpha_k\left(R^{ij}\right)\cdot\nabla f(\vec{x}_k).
\end{equation}

\begin{theorem}\label{measure 0:IMGD}
Let $\vec{x}_{k+1}=g(k,\vec{x}_k)$ be the manifold gradient descent defined by equation (\ref{eq:IMGD}), and $\{\alpha_k\}_{k\in\mathbb{N}}$ be a sequence of positive real numbers of order $\Omega\left(\frac{1}{k}\right)$ that converges to zero. Then the stable set of strict saddle points has measure zero.
\end{theorem}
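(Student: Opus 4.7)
The plan is to mirror the strategy used in the proof of Theorem \ref{measure 0:MGD}, namely to verify the hypotheses of Corollary \ref{SMT2} in a local coordinate chart around each strict saddle point $\vec{x}^*$, and then invoke Corollary \ref{measure 0} to obtain the measure-zero conclusion. The only new ingredient is that the Riemannian structure is intrinsic, so the computation must be carried out in local coordinates and the measure-zero statement must be understood in the manifold sense (covering $M$ by countably many charts via second countability).

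Fix a strict saddle $\vec{x}^*$ and a coordinate chart $\vec{x}(p)$ around it. Write $F(\vec{x}) = (R^{ij}(\vec{x}))\nabla f(\vec{x})$ so that $g(k,\vec{x}) = \vec{x} - \alpha_k F(\vec{x})$. Since the Riemannian gradient $\nabla_R f(\vec{x}^*) = (R^{ij}(\vec{x}^*))\nabla f(\vec{x}^*) = 0$ and $(R^{ij})$ is invertible, the Euclidean gradient $\nabla f(\vec{x}^*)$ vanishes as well. Differentiating $F$ at $\vec{x}^*$ therefore kills the derivative-of-metric term by the product rule, leaving
\[
D_{\vec{x}}g(k,\vec{x}^*) = I - \alpha_k (R^{ij}(\vec{x}^*))\nabla^2 f(\vec{x}^*).
\]
Setting $G = (R^{ij}(\vec{x}^*))\nabla^2 f(\vec{x}^*)$, I would check condition 1 of Corollary \ref{SMT2} by noting that $G$ is similar to the symmetric matrix $g^{1/2}Gg^{-1/2} = g^{-1/2}\nabla^2 f(\vec{x}^*) g^{-1/2}$, where $g$ denotes the metric matrix $(R_{ij}(\vec{x}^*))$. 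Hence $G$ is diagonalizable over $\mathbb{R}$, and its eigenvalues coincide with those of the Riemannian Hessian as a self-adjoint operator on $T_{\vec{x}^*}M$; the strict-saddle hypothesis forces at least one of them to be negative.

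For condition 2, the Taylor remainder is
\[
\theta(k,\vec{x}) = -\alpha_k\bigl[F(\vec{x}) - F(\vec{x}^*) - DF(\vec{x}^*)(\vec{x}-\vec{x}^*)\bigr],
\]
so $D_{\vec{x}}\theta(k,\vec{x}) = -\alpha_k\bigl[DF(\vec{x}) - DF(\vec{x}^*)\bigr]$. Because $f$ is $C^2$ and the metric components $R^{ij}$ are $C^1$ in $\vec{x}$, the map $DF$ is continuous, so for any $\epsilon>0$ there is a neighborhood of $\vec{x}^*$ on which $\|DF(\vec{x}) - DF(\vec{x}^*)\|\le \epsilon$; thus $\|D_{\vec{x}}\theta(k,\vec{x})\|\le \alpha_k\epsilon$ uniformly in $k$. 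The identical Fundamental-Theorem-of-Calculus plus Cauchy--Schwarz argument used in the proof of Theorem \ref{measure 0:GD} then yields $\|\theta(k,\vec{x})-\theta(k,\vec{y})\|\le \alpha_k\epsilon\|\vec{x}-\vec{y}\|$, which is precisely condition 2 of Corollary \ref{SMT2}.

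With both conditions in hand, Corollary \ref{SMT2} produces, for each strict saddle $\vec{x}^*$, an open coordinate neighborhood $U_{\vec{x}^*}$ and a $C^1$-manifold of codimension at least one containing the local stable set. The global stable set is then shown to have Lebesgue-zero intersection with each chart via Corollary \ref{measure 0}, and second countability of $M$ allows one to express the total stable set as a countable union of such measure-zero pieces, making it of measure zero in the Riemannian sense on $M$. The only mildly delicate point I expect is the justification that $(R^{ij}(\vec{x}^*))\nabla^2 f(\vec{x}^*)$ is diagonalizable with at least one negative eigenvalue in the intrinsic setting; once the similarity to a symmetric matrix is written down, the remainder of the argument is essentially bookkeeping in charts.
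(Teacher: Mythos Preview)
Your proposal is correct and follows essentially the same route as the paper's proof: compute $D_{\vec{x}}g(k,\vec{x}^*)=I-\alpha_k(R^{ij}(\vec{x}^*))\nabla^2 f(\vec{x}^*)$, verify condition~1 of Corollary~\ref{SMT2} via the similarity $(R_{ij})^{1/2}G(R_{ij})^{-1/2}=(R_{ij})^{-1/2}\nabla^2 f(\vec{x}^*)(R_{ij})^{-1/2}$, verify condition~2 via continuity of the differential and the Fundamental-Theorem-of-Calculus argument from Theorem~\ref{measure 0:GD}, and conclude with Corollary~\ref{measure 0}. Your treatment of the Taylor remainder is in fact slightly tidier than the paper's, since you keep the full $DF(\vec{x})$ (including the $\partial R^{ij}/\partial x_\ell\cdot\partial f/\partial x_j$ terms) when writing $D_{\vec{x}}\theta(k,\vec{x})=-\alpha_k[DF(\vec{x})-DF(\vec{x}^*)]$, whereas the paper's displayed expression for $D_{\vec{x}}\theta$ drops those terms away from $\vec{x}^*$; your continuity argument covers this without issue.
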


\begin{proof}
Let $\vec{x}^*\in\mathcal{X}^*$, then $\nabla f(\vec{x}^*)=0$, and $g(k,\vec{x}^*)=\vec{x}^*$. To show that $\vec{x}^*$ is unstable, consider the differential 
\[
D_{\vec{x}}g(k,\vec{x})=I-\alpha_kD_{\vec{x}}\left(\left(R^{ij}\right)\cdot\nabla f(\vec{x})\right),
\]
where 
\begin{align}
D_{\vec{x}}\left(\left(R^{ij}\right)\cdot\nabla f(\vec{x})\right)&=\left[
\begin{array}{lll}
\frac{\partial}{\partial x_1}( R^{1j}\frac{\partial f}{\partial x_j}) & \dotsm & \frac{\partial}{\partial x_d}( R^{1j}\frac{\partial f}{\partial x_j}) 
\\
\vdots &  & \vdots
\\
\frac{\partial}{\partial x_1}( R^{dj}\frac{\partial f}{\partial x_j}) & \dotsm & \frac{\partial}{\partial x_d}( R^{dj}\frac{\partial f}{\partial x_j})
\end{array}
\right]
\\
&=\left[
\begin{array}{lll}
\frac{\partial R^{1j}}{\partial x_1}\frac{\partial f}{\partial x_j}+R^{1j}\frac{\partial^2f}{\partial x_1\partial x_j} & \dotsm & \frac{\partial R^{1j}}{\partial x_d}\frac{\partial f}{\partial x_j}+R^{1j}\frac{\partial^2 f}{\partial x_d\partial x_j}
\\
\vdots & & \vdots
\\
\frac{\partial R^{dj}}{\partial x_1}\frac{\partial f}{\partial x_j}+R^{dj}\frac{\partial^2 f}{\partial x_1\partial x_j} & \dotsm & \frac{\partial R^{dj}}{\partial x_d}\frac{\partial f}{\partial x_j}+R^{dj}\frac{\partial^2 f}{\partial x_m\partial x_j}
\end{array}
\right]
\\
&=\left[
\begin{array}{lll}
\frac{\partial R^{1j}}{\partial x_1}\frac{\partial f}{\partial x_j} & \dotsm & \frac{\partial R^{1j}}{\partial x_d}\frac{\partial f}{\partial x_j}
\\
\vdots & & \vdots
\\
\frac{\partial R^{dj}}{\partial x_1}\frac{\partial f}{\partial x_j} & \dotsm & \frac{\partial R^{dj}}{\partial x_d}\frac{\partial f}{\partial x_j}
\end{array}
\right]
+
\left[
\begin{array}{lll}
R^{1j}\frac{\partial^2f}{\partial x_1\partial x_j} & \dotsm &R^{1j}\frac{\partial^2 f}{\partial x_d\partial x_j}
\\
\vdots & & \vdots
\\
R^{dj}\frac{\partial^2 f}{\partial x_1\partial x_j} & \dotsm & R^{dj}\frac{\partial^2 f}{\partial x_d\partial x_j}
\end{array}
\right].
\end{align}
Since at $\vec{x}^*$, $\nabla f(\vec{x}^*)=0$, i.e. $\frac{\partial f}{\partial x_j}=0$ for all $j$, we have
\[
D_{\vec{x}}\left(\left(R^{ij}\right)\cdot\nabla f(\vec{x}^*)\right)=\left[
\begin{array}{lll}
R^{1j}\frac{\partial^2f}{\partial x_1\partial x_j} & \dotsm &R^{1j}\frac{\partial^2 f}{\partial x_d\partial x_j}
\\
\vdots & & \vdots
\\
R^{dj}\frac{\partial^2 f}{\partial x_1\partial x_j} & \dotsm & R^{dj}\frac{\partial^2 f}{\partial x_d\partial x_j}
\end{array}
\right]_{\vec{x}=\vec{x}^*}=\left(R^{ij}\right)\cdot \left(\frac{\partial^2 f}{\partial x_i\partial x_j}\right)\Bigg|_{\vec{x}=\vec{x}^*}.
\]
Recall that $\left(R^{ij}\right)=\left(R_{ij}\right)^{-1}$, and as it is shown in \cite{LPP}, by the similar transformation under $\left(R_{ij}\right)^{\frac{1}{2}}$, we have 
\[
\left(R_{ij}\right)^{\frac{1}{2}}\cdot D_{\vec{x}}\left(\left(R^{ij}\right)\cdot\nabla f(\vec{x}^*)\right)\cdot\left(R_{ij}\right)^{-\frac{1}{2}}=\left(R_{ij}\right)^{-\frac{1}{2}}\cdot\left(\frac{\partial^2 f}{\partial x_i\partial x_j}\right)\cdot\left(R_{ij}\right)^{-\frac{1}{2}},
\]
that is a symmetric matrix, so it is diagonalizable. And thus, $D_{\vec{x}}\left(\left(R^{ij}\right)\cdot\nabla f(\vec{x}^*)\right)$ is diagonalizable and has the same eigenvalue with $\left(\frac{\partial^2 f}{\partial x_i\partial x_j}\right)$, meaning that it has negative eigenvalues. So the Riemmanian Hessian at $\vec{x}^*$ as at least one negative eigenvalue.  
\\
\indent
The Taylor expansion of $g(k,\vec{x})$ at $\vec{x}^*$ is
\[
g(k,\vec{x})=g(k,\vec{x}^*)+\left(I-\alpha_k\left(R^{ij}(\vec{x}^*)\right)\cdot\left(\frac{\partial^2 f}{\partial x_i\partial x_j}(\vec{x}^*)\right)\right)(\vec{x}-\vec{x}^*)+\theta(k,\vec{x}),
\]
where 
\[
\left(R^{ij}(\vec{x}^*)\right)\cdot\left(\frac{\partial^2 f}{\partial x_i\partial x_j}(\vec{x}^*)\right)=\left(R^{ij}\right)\cdot \left(\frac{\partial^2 f}{\partial x_i\partial x_j}\right)\Bigg|_{\vec{x}=\vec{x}^*}.
\]
We have 
\[
\theta(k,\vec{x})=g(k,\vec{x})-\vec{x}^*-\left(I-\alpha_k\left(R^{ij}(\vec{x}^*)\right)\cdot\left(\frac{\partial^2 f}{\partial x_i\partial x_j}(\vec{x}^*)\right)\right)(\vec{x}-\vec{x}^*)
\]
and 
\[
D_{\vec{x}}\theta(k,\vec{x})=-\alpha_k\left(R^{ij}(\vec{x})\right)\cdot\left(\frac{\partial^2 f}{\partial x_i\partial x_j}(\vec{x})\right)+\alpha_k\left(R^{ij}(\vec{x}^*)\right)\cdot\left(\frac{\partial^2 f}{\partial x_i\partial x_j}(\vec{x}^*)\right).
\]
By the continuity of $\left(\frac{\partial^2 f}{\partial x_i\partial x_j}(\vec{x})\right)$, the same argument as the verification of condition 2 in the proof of Theorem \ref{measure 0:GD} implies that $\theta(k,\vec{x})$ satisfies the condition 2 of Corollary \ref{SMT2}. Combining with Corollary \ref{measure 0}, we conclude that the stable set of strict saddle points has measure (induced by metric $R$) zero.
\end{proof}

\section{Conclusion}
In this paper, we generalize the results of \cite{LPP} for the case of vanishing stepsizes. We showed that if the stepsize $\alpha_k$ converges to zero with order $\Omega\left(\frac{1}{k}\right)$, then gradient descent, mirror descent, proximal point and manifold descent still avoid strict saddles. We believe that this is an important result that was missing from the literature since in practice vanishing or adaptive stepsizes are commonly used. Our main result boils down to the proof of a Stable-manifold theorem \ref{SMT} that works for time non-homogeneous dynamical systems and might be of independent interest. We leave as an open question the case of Block Coordinate Descent (as it also appears in \cite{LPP}).

\begin{figure}[h!]
\centering
\includegraphics[width=0.8\textwidth]{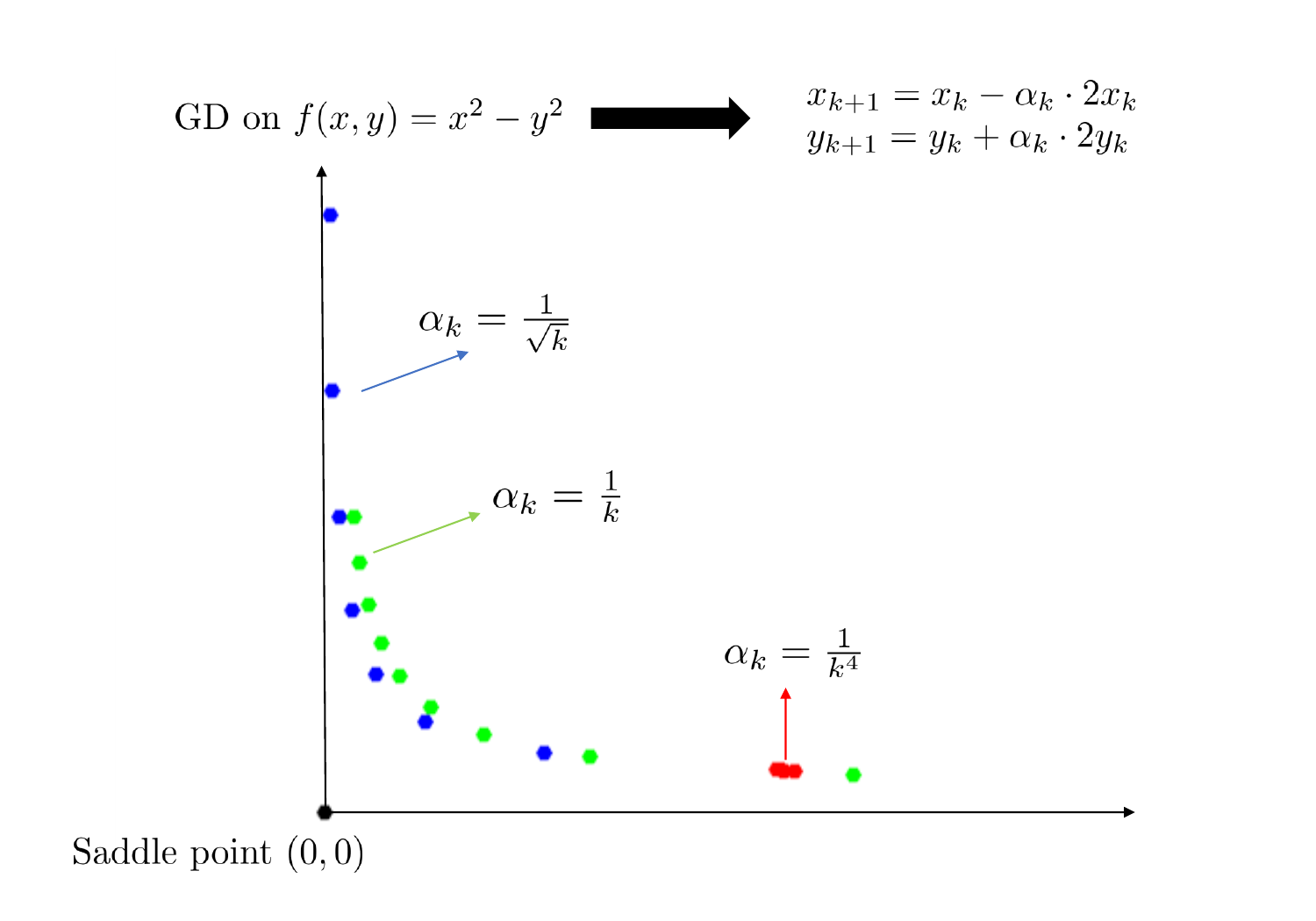}
\caption{Steps of Gradient Descent for $x^2-y^2$. $(0,0)$ is a strict saddle. Stepsizes $\frac{1}{\sqrt{k}}, \frac{1}{k}$ (blue, green)  avoid $(0,0)$ (blue faster than green). Stepsize $\frac{1}{k^4}$ (red) converges to a non-critical point.}
\label{fig:stepsize}
\end{figure}

\paragraph*{Acknowledgement}
We thank Nicolas Boumal and Andreea-Alexandra Musat for pointing out that the proof of the main theorem to work requires the extra assumption that the Hessian of $f$ is invertible.

\newpage
\bibliography{bibli}
\bibliographystyle{plain}
\newpage
\appendix
\section{Statement of theorems used and for completion}
\begin{theorem}[Banach Fixed Point Theorem, 2.1 \cite{AL}]
Let $(X,d)$ be a complete metric space, then each contraction map $T:X\rightarrow X$ has unique fixed point.
\end{theorem}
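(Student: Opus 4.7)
The plan is to prove existence and uniqueness of the fixed point separately, using the classical Picard iteration argument. Let $c \in [0,1)$ denote the contraction constant of $T$, so that $d(T(x), T(y)) \le c\, d(x,y)$ for all $x, y \in X$.

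For existence, I would pick an arbitrary starting point $x_0 \in X$ and define the Picard iterates $x_{n+1} := T(x_n)$ for $n \ge 0$. The main technical step is to show that $\{x_n\}$ is Cauchy. A direct induction using the contraction property gives $d(x_{n+1}, x_n) \le c^n\, d(x_1, x_0)$, and then summing via the triangle inequality yields, for any $m > n$,
\[
d(x_m, x_n) \;\le\; \sum_{k=n}^{m-1} d(x_{k+1}, x_k) \;\le\; \frac{c^n}{1-c}\, d(x_1, x_0),
\]
which tends to $0$ as $n \to \infty$ because $c < 1$. Completeness of $X$ then produces a limit $x^* \in X$. Since $T$ is a contraction it is in particular Lipschitz continuous, so passing to the limit in the recursion $x_{n+1} = T(x_n)$ gives $x^* = T(x^*)$.

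For uniqueness, I would suppose $x^*$ and $y^*$ are both fixed points of $T$ and apply the contraction estimate to conclude
\[
d(x^*, y^*) \;=\; d(T(x^*), T(y^*)) \;\le\; c\, d(x^*, y^*).
\]
Since $c < 1$, this forces $d(x^*, y^*) = 0$ and hence $x^* = y^*$.

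There is essentially no hard step in this argument since the result is completely classical. The only subtle point worth highlighting is that completeness of $X$ is invoked in an essential way to produce the limit of the Cauchy sequence without knowing it in advance, after which Lipschitz continuity of $T$ is used to pass the fixed-point relation to the limit. The geometric-series bound $c^n/(1-c)$ is the only quantitative ingredient and it falls out immediately from iterating the contraction inequality.
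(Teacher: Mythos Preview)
Your proof is correct and is the standard Picard iteration argument. Note, however, that the paper does not actually prove this statement: it is listed in the appendix under ``Statement of theorems used and for completion'' and is simply cited from the reference \cite{AL} without proof, so there is no paper proof to compare against.
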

\begin{theorem}[Center-Stable Manifold Theorem, III.7  \cite{shub1987global}]\label{thm:manifold}
	Let $x^*$ be a fixed point for the $C^r$ local diffeomorphism $g: \mathcal{X} \to \mathcal{X}$. Suppose that $E = E_s \oplus E_u$, where $E_s$ is the span of the eigenvectors corresponding to eigenvalues of magnitude less than or equal to one of $D g(x^*)$, and $E_u$ is the span of the eigenvectors corresponding to eigenvalues of magnitude greater than one of $D g(x^*)$\footnote{Jacobian of function $g$.}. Then there exists a $C^r$ embedded disk $W^{cs}_{loc}$ of dimension $dim(E^s)$ that is tangent to $E_s$ at $x^*$ called the \emph{local stable center manifold}.  Moreover, there exists a neighborhood $B$ of $x^*$, such that $g(W^{cs}_{loc} ) \cap B \subset W^{cs} _{loc}$, and $\cap_{k=0}^\infty g^{-k} (B) \subset W^{cs}_{loc}$.
\end{theorem}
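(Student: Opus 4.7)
The plan is to adapt the classical Lyapunov--Perron scheme for an autonomous $C^r$ local diffeomorphism, exploiting the fact that the spectrum of $Dg(x^*)$ is split strictly across magnitude one on the unstable side. First I would translate $x^*$ to the origin and write the iteration as $x_{k+1} = Ax_k + h(x_k)$, where $A = Dg(x^*)$, $h(0)=0$, $Dh(0)=0$, and $h \in C^r$. Decomposing $\mathbb{R}^n = E_s \oplus E_u$ into the invariant subspaces of $A$ yields $A = A_s \oplus A_u$ with $\rho(A_u^{-1}) < 1$ and $\rho(A_s) \le 1$, and with corresponding projections $P_s, P_u$. A standard smooth cutoff of $h$ outside a small ball $B$ of radius $\delta$ produces a $C^r$ extension $\tilde h$ whose global Lipschitz constant $\mathrm{Lip}(\tilde h)$ can be made as small as desired by shrinking $\delta$.

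Next I would fix a growth rate $\mu$ satisfying $\rho(A_s) \le \mu < \rho(A_u^{-1})^{-1}$, and work in the Banach space
\[
X_\mu = \bigl\{\{x_k\}_{k\ge 0} : \sup_{k\ge 0}\mu^{-k}\|x_k\| < \infty\bigr\}
\]
with its weighted sup norm. The Lyapunov--Perron fixed-point equation, obtained by splitting the discrete variation-of-constants formula into a forward-in-time stable part and a backward-in-time unstable part, reads
\[
x_k = A_s^{k}\xi + \sum_{j=0}^{k-1} A_s^{k-1-j} P_s \tilde h(x_j) - \sum_{j=k}^{\infty} A_u^{k-1-j} P_u \tilde h(x_j),
\]
parameterized by $\xi \in E_s$. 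Using geometric estimates of the operator norms of $A_s^{k-1-j}$ (for $j\le k-1$) and $A_u^{k-1-j}$ (for $j\ge k$) against the weight $\mu^k$, together with the small $\mathrm{Lip}(\tilde h)$, the right-hand side defines a map $X_\mu \to X_\mu$ that is a uniform contraction in $\{x_k\}$ in $\xi$. Banach's fixed-point theorem then produces, for each $\xi$ in a small neighborhood, a unique orbit $\{x_k(\xi)\}$; setting $\varphi(\xi) := x_0(\xi) - \xi \in E_u$, the candidate local center-stable manifold is $W^{cs}_{loc} = \{\xi + \varphi(\xi) : \xi \in E_s, \ \|\xi\| < \delta'\}$.

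To promote $\varphi$ from Lipschitz continuity to $C^r$ and to verify tangency $D\varphi(0) = 0$, I would invoke the $C^r$-smooth dependence of contraction fixed points on a parameter via the fiber contraction theorem: formally differentiating the fixed-point equation in $\xi$ produces, at each order $1\le r'\le r$, a new contraction on a space of sequences of $r'$-multilinear maps whose unique fixed point is the corresponding derivative of $\xi \mapsto \{x_k(\xi)\}$. Iterating this identifies $\varphi$ as $C^r$, and evaluating at $\xi = 0$ with $h(0) = 0$, $Dh(0) = 0$ gives $\varphi(0) = 0$ and $D\varphi(0) = 0$, i.e.\ the required tangency to $E_s$.

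Finally, for the invariance statements I would argue directly from uniqueness of the fixed point. If $x_0 = \xi + \varphi(\xi) \in W^{cs}_{loc}$ and $g(x_0) \in B$, then the shifted orbit $\{x_{k+1}(\xi)\}$ also lies in $X_\mu$, satisfies the same equation with new parameter $P_s g(x_0)$, and hence $g(x_0) = P_s g(x_0) + \varphi(P_s g(x_0)) \in W^{cs}_{loc}$. For the inclusion $\bigcap_{k\ge 0} g^{-k}(B) \subset W^{cs}_{loc}$: any $x_0$ whose entire forward orbit stays in $B$ generates a sequence in $X_\mu$ on which the cutoff is inactive, so it solves the original Lyapunov--Perron equation with $\xi = P_s x_0$, and by uniqueness $x_0 = \xi + \varphi(\xi) \in W^{cs}_{loc}$. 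The main obstacle will be the smoothness step: existence of a Lipschitz graph is routine from contraction, but producing a $C^r$ graph when $E_s$ can contain eigenvalues of modulus exactly one (so that $\rho(A_s) = 1$ and no strict decay is available on the stable side) forces one to choose the weights $\mu$ and the ball radius $\delta$ very carefully at each differentiation order to keep the induced fixed-point problems for the derivatives contractive.
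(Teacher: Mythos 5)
The paper does not prove this statement at all: it is imported verbatim from Shub \cite{shub1987global} (Theorem III.7) in the appendix, stated only for completeness, and Shub's own proof runs through the Hadamard graph-transform machinery for pseudo-hyperbolic splittings rather than the route you take. Your Lyapunov--Perron sketch is nonetheless a correct alternative, and it is in fact essentially the same scheme the authors use for the theorem they do prove (the time non-homogeneous stable manifold theorem, Theorem \ref{SMT}): cut off the nonlinearity, work in a space of sequences, split the discrete variation-of-constants formula into a forward stable sum and a backward unstable sum, apply Banach's fixed point theorem, read off the graph $\varphi$ from the constraint on the unstable component of the initial condition, and obtain the inclusion $\bigcap_{k\ge 0} g^{-k}(B)\subset W^{cs}_{loc}$ from uniqueness of the fixed point, exactly as you outline. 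What your route buys is a self-contained proof whose estimates parallel the paper's own arguments (its Lemmas \ref{boundedC} and \ref{boundedB} are the non-autonomous analogues of your geometric bounds on the stable and unstable blocks); what the graph-transform route buys is the same statement with less weight bookkeeping and in a form that extends to normally hyperbolic invariant sets. One correction to your closing worry: for the center-\emph{stable} manifold the $C^r$ step is not genuinely obstructed by $\rho(A_s)=1$. Since every eigenvalue of $A_u$ has modulus strictly greater than one and $r$ is fixed and finite, you may pick the weight $\mu\in\bigl(1,\min_{\lambda\in\mathrm{spec}(A_u)}\abs{\lambda}\bigr)$ so close to $1$ that $\mu^{r}<\min_{\lambda\in\mathrm{spec}(A_u)}\abs{\lambda}$; taking $\mu>1$ strictly also absorbs the possible polynomial growth of $\norm{A_s^k}$ coming from non-semisimple unit-modulus eigenvalues, and then the fiber-contraction argument closes at every order up to $r$. (The genuine loss-of-smoothness phenomenon you are recalling afflicts the center manifold, not the center-stable one.) With that spectral-gap bookkeeping made explicit, your outline is a complete and correct plan, though it proves the cited classical theorem by the paper's method rather than by the method of the cited source.
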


\section{Lyapunov-Perron Method}
The Lyapunov-Perron method has been developed by A.M. Lyapunov and O. Perron for the proof of the existence of stable and unstable manifolds of hyperbolic equilibrium points of ODEs. It uses the integral equation formulation of the differential equation and constructs the invariant manifold as a fixed point of an operator that is derived from this integral equation. The following case for time homogeneous ODEs can be found in Section 2.7, \cite{Perk}. Let $F:\mathbb{R}^d\rightarrow\mathbb{R}^d$ be of $C^1$ with $F(\vec{0})=\vec{0}$, consider the ODE 
\[
\frac{d\vec{x}}{dt}=F(\vec{x}),
\]
whose linear approximation at $\vec{0}$ is 
\[
\frac{d\vec{x}}{dt}=A\vec{x}+\eta(\vec{x}).
\]
By a change of coordinate system, $A$ is assumed to be decomposed to stable-unstable blocks respectively. Consider the operator $T$ defined as follows:
\[
Tu(t,\vec{x}_0)=U(t)\vec{x}_0+\int_0^tU(t-s)\eta(u(s,\vec{x}_0))ds-\int_t^{\infty}V(t-s)\eta(u(s,\vec{x}_0))ds
\]
where $\vec{x}_0$ is the initial point, $U(t)$ and $V(t)$ are integral operators from the block decomposition of $A$. The stable manifold is the fixed point of $T$ following from the Banach fixed point theorem.

\section{Proof of Theorem \ref{SMT}}

\begin{proof}
Denote $A\left(m,n\right)=\left(I-\alpha_mH\right)...\left(I-\alpha_nH\right)$ for $m\ge n$, and $A\left(m,n\right)=I$ if $m<n$. Then the dynamical system can be written as
\begin{equation}\label{eq:solution}
x_{k+1}=A\left(k,0\right)x_0+\sum_{i=0}^{k}A\left(k,i+1\right)\eta\left(i,x_i\right).
\end{equation}
Since $H$ is diagonal, the matrix $A\left(m,n\right)$ has the form of 
\[
\left(
\begin{array}{ll}
B(m,n) &
\\
&C(m,n)
\end{array}
\right)
\]
where $B_k$ and $C_k$ are diagonal as well and corresponding to \emph{stable} and \emph{unstable} subspaces of $I-\alpha_kH$ at $0$. Using the same notation of denoting  $A\left(m,n\right)$, we denote 
\[
B\left(m,n\right)=B_m\cdot...\cdot B_n
\]
and 
\[
C\left(m,n\right)=C_m\cdot...\cdot C_n.
\]
Let $v$ be a vector, we denote $v^+$ the stable component of $v$ and $v^-$ the unstable component of $v$. Then the solution (\ref{eq:solution}) can be written in terms of stable and unstable components as
\[
x_{k+1}^+=B\left(k,0\right)x_0^++\sum_{i=0}^kB\left(k,i+1\right)\eta^+\left(i,x_i\right)
\]
and
\[
x_{k+1}^-=C\left(k,0\right)x_0^-+\sum_{i=0}^kC\left(k,i+1\right)\eta^-\left(i,x_i\right).
\]
If $x_{k+1}\rightarrow 0$ as $k\rightarrow\infty$, then $x_{k+1}^-\rightarrow 0$ as $k\rightarrow\infty$. So we let $k\rightarrow\infty$, the following limit must holds:
\[
\lim_k\left(C\left(k,0\right)x_0^-+\sum_{i=0}^kC\left(k,i+1\right)\eta^-\left(i,x_i\right)\right)=0.
\]
Then we can solve $x_0^-$ in limit:
\[
x_0^-=\lim_k\big(C_0^{-1}\cdot...\cdot C_k^{-1}x_{k+1}^{-1}-\left[C_0^-\eta^-\left(0,x_0\right)
+\dotsm
+C_0^{-1}\cdot...\cdot C_k^{-1}\eta^-\left(k,x_k\right)\right]\big),
\]
and then by taking limit as $k\rightarrow\infty$,
\begin{equation}\label{eq:inftysum}
x_0^-=-\sum_{i=1}^{\infty}C(i-1,0)^{-1}\eta^-\left(i-1,x_{i-1}\right),
\end{equation}
where $C(m,n)^{-1}$ denotes the inverse of $C(m,n)$.
\\
So the initial condition $x_0$, if written as a column vector, has the form of
\[
x_0=\left(
\begin{array}{c}
x_0^+
\\
-\sum_{i=1}^{\infty}C(i-1,0)^{-1}\eta^-\left(i-1,x_{i-1}\right).
\end{array}
\right)
\]
Written as a column vecto, the solution of the dynamical system is of the form of
\[
\left(
\begin{array}{l}
x_{k+1}^+
\\
x_{k+1}^-
\end{array}
\right)
=\left(
\begin{array}{l}
B\left(k,0\right)x_0^++\sum_{i=0}^kB\left(k,i+1\right)\eta^+\left(i,x_i\right)
\\
C\left(k,0\right)x_0^-+\sum_{i=0}^kC\left(k,i+1\right)\eta^-\left(i,x_i\right).
\end{array}
\right)
\]
Plugging the equation (\ref{eq:inftysum}) back to the above expression, we have 
\begin{align}
x_{k+1}&=\left(
\begin{array}{c}
B\left(k,0\right)x_0^+
\\
-\sum_{i=0}^kC\left(k,i+1\right)\eta^-\left(i,x_i\right)-\sum_{i=0}^\infty C(k+1+i,k+1)^{-1}\eta^-\left(k+1+i,x_{k+1+i}\right)
\end{array}
\right)
\\
&+\left(
\begin{array}{c}
\sum_{i=0}^kB\left(k,i+1\right)\eta^+\left(i,x_i\right)
\\
\sum_{i=0}^kC\left(k,i+1\right)\eta^-\left(i,x_i\right)
\end{array}
\right)
\\
&=\left(
\begin{array}{c}
B\left(k,0\right)x_0^++\sum_{i=0}^kB\left(k,i+1\right)\eta^+\left(i,x_i\right)
\\
-\sum_{i=0}^\infty C(k+1+i,k+1)^{-1}\eta^{-}\left(k+1+i,x_{k+1+i}\right).
\end{array}
\right)
\end{align}
Denote $\mathbb{B}(\delta)\subset\mathbb{R}^d$ the ball around 0 with Euclidean radius $\delta$. Denote 
\[
\ell_0(\mathbb{B}(\delta))=\{\{u_n\}_{n\in\mathbb{N}}\subset\mathbb{B}(\delta):\lim_{n\rightarrow\infty}u_n=0\}
\] 
 the metric space of  sequences whose entries are in $\mathbb{B}(\delta)$, with metric defined as
\begin{equation}\label{metric}
d(u,v):=\sup_{n\ge 0}\{\norm{u_n-v_n}\}.
\end{equation}
for any $u=\{u_n\}_{n\in\mathbb{N}}$ and $v=\{v_n\}_{n\in\mathbb{N}}$ in the ball $\mathbb{B}(\delta)$.
Then $\ell_0(\mathbb{B}(\delta))$ is a complete metric space 
\begin{comment}
Throughout the proof, we focus on the subspace of $\ell_b^{\infty}(\mathbb{R}^d)$ that consists of all sequences converging to 0, denoted
\[
\ell_0^{\infty}(\mathbb{R}^d)=\{u\in\ell_b^{\infty}(\mathbb{R}^d):\lim_{n\rightarrow\infty}u_n=0\}.
\]
 $\ell_0^{\infty}(\mathbb{R}^d)$ is complete with respect to the supremum norm.
\end{comment}
Reason is as follows:
\\
Let $u_1=\{u_{1j}\}_{j\in\mathbb{N}}$, $u_2=\{u_{2j}\}_{j\in\mathbb{N}}$,..., $u_i=\{u_{ij}\}_{j\in\mathbb{N}}$,... be a sequence of sequences in $\ell_0(\mathbb{B}(\delta))$. Suppose $\{u_i\}_{i\in\mathbb{N}}$ is Cauchy with respect to the metric defined by \ref{metric},i.e. given any $\epsilon>0$, there exists integer $L>0$, such that 
\[
d(u_n,u_m)=\sup_{j\ge 0}\{\norm{u_{nj}-u_{mj}}\}<\epsilon
\]
for all $n,m>L$.
This means that for each $j$, there exists a point $u_{*j}\in\mathbb{B}(\delta)$ such that $\lim_{i\rightarrow\infty}u_{ij}=u_{*j}$. And then we denote the limit sequence as 
  $u_{*}=\{u_{*j}\}_{j\in\mathbb{N}}$.  Furthermore, letting $m\rightarrow\infty$, we have that
  \[
  \norm{u_{nj}-u_{*j}}<\epsilon
  \]
  for all $n>L$. Fixing $n$ and letting $j\rightarrow\infty$, we have $u_{*j}\rightarrow 0$ since $u_{nj}\rightarrow 0$. And this shows that $u_*\in\ell_0(\mathbb{B}(\delta))$.

  \begin{comment}
  This means that for any $\epsilon>0$, we can find an integer $N>0$ such that 
\[
\sup_{j\in\mathbb{N}}\abs{u_{ij}-u_{*j}}=\norm{u_i-u_*}<\frac{\epsilon}{2}
\]
We fix any $l>N$, then we have
\[
\abs{\abs{u_{lj}}-\abs{u_{*j}}}\le\abs{u_{lj}-u_{*j}}\le\sup_{j\in\mathbb{N}}\abs{u_{lj}-u_{*j}}<\frac{\epsilon}{2}
\]
and then 
\[
\abs{u_{*j}}<\abs{u_{lj}}+\frac{\epsilon}{2}
\]
Since for this $l$, we have $\lim_{j\rightarrow\infty}u_{lj}=0$ by assumption that $u_l\in\ell_0^{\infty}(\mathbb{R}^d)$. So we can find an integer $M>0$ such that 
\[
\abs{u_{lj}}<\frac{\epsilon}{2}
\] 
for all $j>M$. And this implies that $\abs{u_{*j}}<\epsilon$ for all $j>M$, meaning that $\lim_{j\rightarrow\infty}u_{*j}=0$
\end{comment}
%%%%%%
%%%%%% Definition of T
Define the operator $T$ for each sequence $x=\{x_n\}_{n\in\mathbb{N}}\subset \mathbb{R}^d$ to be
\begin{align}\label{def:T}
\left(Tx\right)_{k+1}=\left(
\begin{array}{c}
B\left(k,0\right)x_0^++\sum_{i=0}^kB\left(k,i+1\right)\eta^+\left(i,x_i\right)
\\
-\sum_{i=0}^\infty C(k+1+i,k+1)^{-1}\eta^{-}\left(k+1+i,x_{k+1+i}\right).
\end{array}
\right)
\end{align} 
for $k\ge 0$ and $\left(Tx\right)_0=x_0$.
\\
Next we prove that $T$ is a contraction map when choosing sequence in a small enough neighborhood around $0$.
\\
Take $\mathbb{B}(\delta)$ a small enough neighborhood around $0$ such that the Lipschitz condition is satisfied. Let $u=\{u_n\}_{n\in\mathbb{N}}\subset \mathbb{B}(\delta)$ and $v=\{v_n\}_{n\in\mathbb{N}}\subset \mathbb{B}(\delta)$. Then we have 
\begin{align}
\left(Tu-Tv\right)_{k+1}&=(Tu)_{k+1}-(Tv)_{k+1}
\\
&=\left(
\begin{array}{c}
B\left(k,0\right)u_0^++\sum_{i=0}^kB\left(k,i+1\right)\eta^+\left(i,u_i\right)
\\
-\sum_{i=0}^\infty C(k+1+i,k+1)^{-1}\eta^{-}\left(k+1+i,u_{k+1+i}\right)
\end{array}
\right)
\\
&-\left(
\begin{array}{c}
B\left(k,0\right)v_0^++\sum_{i=0}^kB\left(k,i+1\right)\eta^+\left(i,v_i\right)
\\
-\sum_{i=0}^\infty C(k+1+i,k+1)^{-1}\eta^{-}\left(k+1+i,v_{k+1+i}\right)
\end{array}
\right)
\\
&=\left(
\begin{array}{c}
B\left(k,0\right)(u_0^+-v_0^+)+\sum_{i=0}^kB\left(k,i+1\right)(\eta^+(i,u_i)-\eta^+\left(i,v_i\right))
\\
-\sum_{i=0}^\infty C(k+1+i,k+1)^{-1}(\eta^-(k+1+i,u_{k+1+i})-\eta^{-}\left(k+1+i,v_{k+1+i}\right)).
\end{array}
\right)
\end{align}
Use spectrum norm $\norm{\cdot}$ for matrices, we have
\begin{align}
\abs{(Tu-Tv)_{k+1}}&\le \norm{B(k,0)}\abs{u_0^+-v_0^+}+\sum_{i=0}^k\norm{B(k,i+1)}\norm{\eta^+(i,u_i)-\eta^+(i,v_i)}
\\
&+\sum_{i=0}^{\infty}\norm{C(k+1+i,k+1)^{-1}}\norm{\eta^-(k+1+i,u_{k+1+i})-\eta^-(k+1+i,v_{k+1+i})}
\\
&\text{(by Lipschitz assumption (\ref{Lipschitz}))}
\\
&\le\norm{B(k,0)}\norm{u_0^+-v_0^+}+\sum_{i=0}^k\norm{B(k,i+1)}\alpha_i\epsilon\norm{u_i-v_i}
\\
&+\sum_{i=0}^{\infty}\norm{C(k+1+i,k+1)^{-1}}\alpha_{k+1+i}\epsilon\norm{u_{k+1+i}-v_{k+1+i}}
\\
&\le\norm{B(k,0)}d(u,v)+\sum_{i=0}^k\norm{B(k,i+1)}\alpha_i\epsilon d(u,v)
\\
&+\sum_{i=0}^{\infty}\norm{C(k+1+i,k+1)^{-1}}\alpha_{k+1+i}\epsilon d(u,v)
\\
&=\norm{B(k,0)}d(u,v)+\sum_{i=0}^k\alpha_i\epsilon\norm{B(k,i+1)}d(u,v)
\\
&+\sum_{i=0}^{\infty}\alpha_{k+1+i}\epsilon\norm{C(k+1+i,k+1)^{-1}}d(u,v)
\\
&=\norm{B(k,0)}d(u,v)+\epsilon d(u,v)\left(\sum_{i=0}^k\alpha_i\norm{B(k,i+1)}\right)
\\
&+\epsilon d(u,v)\left(\sum_{i=0}^{\infty}\alpha_{k+1+i}\norm{C(k+1+i,k+1)^{-1}}\right)\label{eq:bounded}
\end{align}

Next we proceed to prove that 
\[
\norm{B(k,0)}+\epsilon\left(\sum_{i=0}^k\alpha_i\norm{B(k,i+1)}\right)+\epsilon\left(\sum_{i=0}^{\infty}\alpha_{k+1+i}\norm{C(k+1+i,k+1)^{-1}}\right)
\]
can be taken less than 1 so that $T$ is a contraction map on $\ell_0(\mathbb{B}(\delta))$.
%%%%%%%% Lemma
\begin{lemma}\label{boundedC}
\[
R_k=\sum_{i=0}^{\infty}\alpha_{k+1+i}\norm{C(k+1+i,k+1)^{-1}}
\]
is a convergent series for each $k\in\mathbb{N}^+$. Moreover, there exists a constant $K_2>0$ such that $R_k\le K_2$ for all $k\in\mathbb{N}^+$.
\end{lemma}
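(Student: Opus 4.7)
The plan is to exploit the diagonal structure of $C$ and reduce the bound on $R_k$ to a telescoping sum. Since $H$ is diagonal and the block $C$ corresponds to the unstable part of $I - \alpha_t H$, namely the eigenvalues $\lambda_i$ of $H$ with $\lambda_i<0$, the matrix $C(j,k+1)$ is diagonal with entries of the form $\prod_{t=k+1}^{j}(1 + \alpha_t |\lambda_i|)\ge 1$. Its inverse is therefore diagonal with entries in $(0,1]$, and the spectral norm $\norm{C(j,k+1)^{-1}}$ equals the largest such entry, attained at the smallest $|\lambda_i|$. Setting $\mu := \min\{|\lambda_i|:\lambda_i<0\}$, which is strictly positive because there are finitely many eigenvalues and $\lambda_d<0$ by hypothesis, I would immediately get
\[
\norm{C(j,k+1)^{-1}} \;\le\; \prod_{t=k+1}^{j}\frac{1}{1+\mu\,\alpha_t}.
\]

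The heart of the proof is a telescoping identity. I plan to introduce $\phi_j := \prod_{t=k+1}^{j}(1+\mu\alpha_t)$, with the convention $\phi_k = 1$ for the empty product, and exploit the elementary relation $\phi_j - \phi_{j-1} = \mu\alpha_j\phi_{j-1}$. Dividing through by $\phi_{j-1}\phi_j$ produces
\[
\frac{\alpha_j}{\phi_j} \;=\; \frac{1}{\mu}\left(\frac{1}{\phi_{j-1}} - \frac{1}{\phi_j}\right),
\]
so that combining with the previous bound and summing from $j=k+1$ to $N$ gives $\sum_{j=k+1}^N \alpha_j\norm{C(j,k+1)^{-1}} \le \tfrac{1}{\mu}(1 - 1/\phi_N)$. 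Every term is nonnegative, so the partial sums are monotone and uniformly bounded by $1/\mu$; this proves both convergence of the series and the bound $R_k\le 1/\mu$ for every $k$.

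To pin down the limit, I would verify $\phi_N\to\infty$. Because $\alpha_t = \Omega(1/t)$, the series $\sum_t \alpha_t$ diverges, and once $t$ is large enough that $\mu\alpha_t\le 1$ (which happens uniformly in $k$ since $\alpha_t\to 0$), one has $\log(1+\mu\alpha_t)\ge \mu\alpha_t/2$, so $\log\phi_N\to\infty$ and $1/\phi_N\to 0$. Taking $K_2 := 1/\mu$ then yields the uniform bound stated in the lemma, and in fact $R_k \to 1/\mu$ in the limit, since the tail of the sum is essentially a Riemann sum for the integral $\int \mu e^{-\mu s}\,ds$.

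There is no serious obstacle here: the whole argument is the discrete analogue of the elementary identity $\int_0^\infty \mu e^{-\mu s}\,ds = 1$ under the substitution $s = \sum\alpha_t$, and the only ingredient that genuinely uses the $\Omega(1/k)$ hypothesis is the divergence of $\sum\alpha_t$. This is precisely what prevents $R_k$ from blowing up; for summable step-sizes like $1/k^{1+\epsilon}$ one would have $\phi_N$ bounded and the estimate would fail, in perfect agreement with the remark made in the intuition section.
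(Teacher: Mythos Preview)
Your proof is correct and considerably simpler than the paper's. The paper proceeds by first pulling out the factor $\alpha_{k+1+i}$ via $\alpha_{k+1+i}\le\alpha_k$ (implicitly assuming monotonicity of the step-sizes), then bounding $(1-\alpha_j\lambda)^{-1}\le\exp\bigl(\alpha_j\lambda/(1-\alpha_j\lambda)\bigr)$, comparing the resulting series to an integral of the form $\int_k^\infty e^{-t^{1-p}}\,dt$, and finally invoking the asymptotics of the incomplete Gamma function $\Gamma(s,x)/(x^{s-1}e^{-x})\to 1$ to conclude uniform boundedness in $k$. Your telescoping identity $\alpha_j/\phi_j=\mu^{-1}\bigl(1/\phi_{j-1}-1/\phi_j\bigr)$ bypasses all of this machinery and delivers the explicit constant $K_2=1/\mu$ in one line, with no monotonicity assumption on $\alpha_k$ and, in fact, without using the $\Omega(1/k)$ hypothesis at all.

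One minor correction to your closing remark: for summable step-sizes the estimate does \emph{not} fail. Your telescoping bound $R_k\le\mu^{-1}(1-1/\phi_N)\le 1/\mu$ holds regardless of whether $\phi_N\to\infty$, so the lemma remains true (and is even easier) in that regime. The $\Omega(1/k)$ assumption is genuinely needed elsewhere in Theorem~\ref{SMT}---for the contraction of $B(k,0)$ and for the divergence along unstable directions in the final containment argument---but not for this particular bound.
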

\begin{proof} %Since $R_k\le R_0=\sum_{i=0}^{\infty}\alpha_{1+i}\norm{C(1+i,1)^{-1}}$, it is enough to show that $R_0<\infty$.
 Denote $\lambda$ the least negative eigenvalue, then the spectrum norm of $C(k+1+i,k+1)^{-1}$ is
\[
\norm{C(k+1+i,k+1)^{-1}}=\prod_{j=k+1}^{k+1+i}(1-\alpha_j\lambda)^{-1}.
\]
Since the sequence $\alpha_i$ is chosen to be small, we have
\begin{align}
R_k&=\sum_{i=0}^{\infty}\alpha_{k+1+i}\norm{C(k+1+i,k+1)^{-1}}
\\
&\le\alpha_k\sum_{i=0}^{\infty}\norm{C(k+1+i,k+1)^{-1}}
\\
&=\alpha_k\sum_{i=0}^{\infty}\prod_{j=k+1}^{k+1+i}(1-\alpha_j\lambda)^{-1}.
\end{align}
Using the inequality $1+x\le e^x$, we have
\[
(1-\alpha_j\lambda)^{-1}=\frac{1}{1-\alpha_j\lambda}=1+\frac{\alpha_j\lambda}{1-\alpha_j\lambda}\le \exp\left(\frac{\alpha_j\lambda}{1-\alpha_j\lambda}\right)
\]
and 
\[
\prod_{j=k+1}^{k+1+i}(1-\alpha_j\lambda)^{-1}\le\exp\left(\sum_{j=k+1}^{k+1+i}\frac{\alpha_j\lambda}{1-\alpha_j\lambda}\right),
\]
and thus 
\begin{equation}\label{boundR}
R_k\le\alpha_k \sum_{i=0}^{\infty}\exp\left(\sum_{j=k+1}^{k+1+i}\frac{\alpha_j\lambda}{1-\alpha_j\lambda}\right).
\end{equation}
Since by assumption, $\alpha_j\in\Omega\left( \frac{1}{j^p}\right)$ and $\lambda<0$, so $1-\alpha_j\lambda$ is positive and bounded, i.e. $1<1-\alpha_j\lambda<c$. And then the following inequalities hold:
\[
\frac{\alpha_j}{1-\alpha_j\lambda}\ge\frac{1}{1-\alpha_j\lambda}\cdot\frac{1}{j^p}\ge\frac{1}{c}\cdot\frac{1}{j^p}.
\]
Multiplying by the negative number $\lambda$, we have
\[
\frac{\alpha_j\lambda}{1-\alpha_j\lambda}\le\frac{\lambda}{c}\cdot\frac{1}{j^p}.
\]
Combining with the inequality \ref{boundR}, we obtain
\[
R_k\le \alpha_k\sum_{i=0}^{\infty}\exp\left(\sum_{j=k+1}^{k+1+i}\frac{\alpha_j\lambda}{1-\alpha_j\lambda}\right)\le\alpha_k\sum_{i=0}^{\infty}\exp\left(\frac{\lambda}{c}\sum_{j=k+1}^{k+1+i}\frac{1}{j^p}\right).
\]
By definition of definite integral, we notice that
\begin{align}
\sum_{j=k+1}^{k+1+i}\frac{1}{j^p}&>\int_{k+1}^{k+2+i}\frac{1}{t^p}dt
\\
&=\frac{1}{1-p}(k+2+i)^{1-p}-\frac{1}{1-p}(k+1)^{1-p}.
\end{align}
Since $\lambda<0$,
\[
\frac{\lambda}{c}\sum_{j=k+1}^{k+1+i}\frac{1}{j^p}<\frac{\lambda}{c(1-p)}(k+2+i)^{1-p}-\frac{\lambda}{c(1-p)}(k+1)^{1-p}
\]
so we have
\begin{align}
\exp\left(\frac{\lambda}{c}\sum_{j=k+1}^{k+1+i}\frac{1}{j^p}\right)&<\exp\left(\frac{\lambda}{c(1-p)}(k+2+i)^{1-p}-\frac{\lambda}{c(1-p)}(k+1)^{1-p}\right)
\\
&=\exp\left(\frac{\lambda}{c(1-p)}(k+2+i)^{1-p}\right)\cdot\exp\left(-\frac{\lambda}{c(1-p)}(k+1)^{1-p}\right).
\end{align}
So for each fixed $k$, we have that
\begin{align}
R_k&\le\alpha_k\sum_{i=0}^{\infty}\exp\left(\frac{\lambda}{c}\sum_{j=k+1}^{k+1+i}\frac{1}{j^p}\right)
\\
&<\alpha_k\exp\left(-\frac{\lambda}{c(1-p)}(k+1)^{1-p}\right)\cdot\sum_{i=0}^{\infty}\exp\left(\frac{\lambda}{c(1-p)}(k+2+i)^{1-p}\right).
\end{align}
The series 
\begin{equation}\label{series:exp}
\sum_{i=0}^{\infty}\exp\left(\frac{\lambda}{c(1-p)}(k+2+i)^{1-p}\right)
\end{equation}
 has the same convergence as the integral 
\[
\int_k^{\infty}\exp\left(-t^{1-p}\right)dt.
\]
Notice that 
\begin{align}
\int_k^{\infty}\exp\left(-t^{1-p}\right)dt&=\int_{k^{1-p}}^{\infty}\exp(-u)\frac{1}{1-p}u^{\frac{1}{1-p}-1}du
\\
&=\frac{1}{1-p}\int_{k^{1-p}}^{\infty}\exp(-u)u^{\frac{1}{1-p}-1}du
\\
&=\frac{1}{1-p}\Gamma\left(\frac{1}{1-p},k^{1-p}\right), (\text{the incomplete Gamma function})
\end{align}
which implies that $\int_k^{\infty}\exp(-t^{1-p})dt$ converges, so does the series \ref{series:exp}.
\\
Since the incomplete Gamma function $\Gamma(s,x)$ has the property
\[
\frac{\Gamma(s,x)}{x^{s-1}e^{-x}}\rightarrow 1\ \ \ \text{as}\ \ \ x\rightarrow\infty,
\]
let $s=\frac{1}{1-p}$ and $x=k^{1-p}$ so that $x^{s-1}=(k^{1-p})^{\frac{1}{1-p}-1}=k^p$, we have that
\[
\frac{1}{k^p}e^{k^{1-p}}\Gamma\left(\frac{1}{1-p},k^{1-p}\right)=\frac{\Gamma(\frac{1}{1-p},k^{1-p})}{k^pe^{-k^{1-p}}}\rightarrow 1
\]
as $k\rightarrow\infty$. This implies that $R_k$ is bounded as $k\rightarrow\infty$.
\end{proof}

\begin{lemma}\label{boundedB}
The sequence 
\[
S_k=\sum_{i=0}^k\alpha_i\norm{B(k,i+1)}
\]
is uniformly bounded for all $k\in\mathbb{N}$, i.e. there exists positive number $K_1$ such that $S_k\le K_1$
\end{lemma}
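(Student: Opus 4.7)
The plan is to exploit the diagonal structure of $B(k,i+1)$ together with a simple telescoping identity. Since $H$ is diagonal with positive eigenvalues $\lambda_1\ge\dots\ge\lambda_s>0$, the stable block $B(k,i+1)$ is itself diagonal, with entries $\prod_{t=i+1}^{k}(1-\alpha_t\lambda_j)$ for $j=1,\dots,s$. Let $\mu:=\lambda_s>0$ denote the smallest positive eigenvalue. Because $\alpha_k\to 0$, there exists $N\in\mathbb{N}$ such that $\alpha_t\lambda_1<1$ for every $t\ge N$; for such $t$ every factor $1-\alpha_t\lambda_j$ lies in $(0,1)$, and the product $\prod_{t=i+1}^{k}(1-\alpha_t\lambda_j)$ is monotone decreasing in $\lambda_j$. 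Hence, for every $i\ge N-1$ and $k\ge i$,
\[
\norm{B(k,i+1)}=\prod_{t=i+1}^{k}(1-\alpha_t\mu)\,=:\,P_i,
\]
with the empty-product convention $P_k=1$.

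The heart of the argument is a telescoping identity. From $P_{i-1}=(1-\alpha_i\mu)P_i$ we get $P_i-P_{i-1}=\alpha_i\mu P_i$, i.e.\ $\alpha_iP_i=(P_i-P_{i-1})/\mu$. Summing from $i=N$ to $k$ collapses the sum:
\[
\sum_{i=N}^{k}\alpha_i\norm{B(k,i+1)}=\frac{1}{\mu}\sum_{i=N}^{k}(P_i-P_{i-1})=\frac{P_k-P_{N-1}}{\mu}=\frac{1-P_{N-1}}{\mu}\le\frac{1}{\mu},
\]
where the last inequality uses $P_{N-1}\in[0,1]$. Crucially, this bound is independent of $k$, which is exactly what the lemma asks for on the tail.

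It remains to bound the finite prefix $\sum_{i=0}^{N-1}\alpha_i\norm{B(k,i+1)}$. Splitting $B(k,i+1)=B(k,N)\cdot B(N-1,i+1)$ and using sub-multiplicativity of the operator norm gives $\norm{B(k,i+1)}\le\norm{B(k,N)}\cdot\norm{B(N-1,i+1)}$. The first factor is at most $1$ by the previous paragraph, while the second factor is a product of at most $N$ diagonal matrices whose entries $|1-\alpha_t\lambda_j|$ are uniformly bounded by $C:=1+\alpha_{\max}\lambda_1$, where $\alpha_{\max}:=\sup_t\alpha_t<\infty$ because $\alpha_t\to 0$. Thus $\norm{B(k,i+1)}\le C^{N}$ for $i\in\{0,\dots,N-1\}$, and the prefix contributes at most $NC^{N}\alpha_{\max}$. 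Combining the two pieces yields $S_k\le NC^{N}\alpha_{\max}+1/\mu=:K_1$, uniformly in $k$. The telescoping identity is the only non-routine step; everything else is bookkeeping about the finitely many initial indices, where we lose only a constant because $(\alpha_t)$ is bounded.
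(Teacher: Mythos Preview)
Your proof is correct. The telescoping identity $\alpha_iP_i=(P_i-P_{i-1})/\mu$ collapses the tail to $(1-P_{N-1})/\mu\le 1/\mu$, and the prefix is handled by the trivial bound $C^N$ on a fixed finite number of factors. The only slightly implicit point is the case $k<N$, but then $S_k$ consists of at most $N$ terms each bounded by $\alpha_{\max}C^N$, so the same constant works.

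The paper argues differently. It observes the one-step recursion $S_{k+1}=(1-\alpha_{k+1}\lambda)S_k+\alpha_{k+1}$, hence $S_{k+1}-S_k=\alpha_{k+1}(1-\lambda S_k)$, and then reads off the dynamics: $S_k$ increases while below $1/\lambda$, decreases while above, and any overshoot past $1/\lambda$ is at most $\alpha_{k+1}$, which for large $k$ is below $1/\lambda$; this yields the bound $2/\lambda$. Your telescoping route is more direct and gives the sharper tail bound $1/\mu$ explicitly, without a case analysis; it also treats the initial indices where $\alpha_t\lambda_1\ge 1$ more carefully than the paper, which silently assumes the factors $1-\alpha_t\lambda_j$ are already in $(0,1)$ when identifying $\norm{B(k,i+1)}$ with the product at the smallest eigenvalue. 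The paper's recursion, on the other hand, is reused later (in the proof that $(Tx)^+_{k+1}\to 0$) with $\alpha_{k+1}\norm{x_{k+1}}$ in place of $\alpha_{k+1}$, so its extra generality is not wasted there.
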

\begin{proof}  Since $B(k,i+1)$ is diagonal, denote $\lambda$ the least positive eigenvalue of $H$, by definition of $B(k,i+1)$, we have that
\[
\norm{B(k,i+1)}=(1-\alpha_k\lambda)\dotsm(1-\alpha_{i+1}\lambda).
\]
Then
\[
S_k=\alpha_0(1-\alpha_k\lambda)\dotsm(1-\alpha_1\lambda)+...+\alpha_k.
\]
Notice that
\[
S_{k+1}=(1-\alpha_{k+1}\lambda)S_k+\alpha_{k+1}.
\]
Consider the difference between $S_{k+1}$ and $S_k$, we have
\begin{align}
S_{k+1}-S_k&=(1-\alpha_{k+1}\lambda)S_k+\alpha_{k+1}-S_k
\\
&=S_k-\alpha_{k+1}\lambda S_k+\alpha_{k+1}-S_k
\\
&=\alpha_{k+1}(1-\lambda S_k).
\end{align}

We observe the following facts:
\begin{enumerate}
\item If $S_k=\frac{1}{\lambda}$, then $S_k=S_{k+1}\equiv\frac{1}{\lambda}$.
 \item If $S_k>\frac{1}{\lambda}$, or equivalently $1-\lambda S_k<0$, then 
% is unbounded, then $\lim_k\abs{\frac{S_{k+1}}{S_k}}=1$ implies that one can find a large enough integer $N$, such that $1-\lambda S_k<0$ for all $k>N$. But this also implies that after $N$, we alway have
$S_{k+1}-S_k<0$, and $S_k$ decreases until $S_{k_1}<\frac{1}{\lambda}$ for some $k_1\in\mathbb{N}$.
\item If $S_k<\frac{1}{\lambda}$, or equivalently $1-\lambda S_k>0$, then $S_{k+1}-S_k>0$, and $S_k$ increases until $S_k>\frac{1}{\lambda}$.
\end{enumerate}
So $S_k$ decreases or increases to $\frac{1}{\lambda}$ (meaning that $S_k$ is bounded), or $S_k$ oscillates around $\frac{1}{\lambda}$. Suppose that $S_k<\frac{1}{\lambda}$ and $S_{k+1}>\frac{1}{\lambda}$, we have that
\[
S_{k+1}\le S_k+\alpha_{k+1}\le \frac{1}{\lambda}+\frac{1}{\lambda}=\frac{2}{\lambda}
\]
when $k$ is large so that $\alpha_k<\frac{1}{\lambda}$. Then in conclusion, $S_k$ is bounded, and the proof completes.
\end{proof}
%meaning that $S_k$ is decreasing when $k>N$ (so $S_k$ is bounded). Contradiction! This proves that $S_k$ is uniformly bounded for all $k\in\mathbb{N}$.
%%%%%%% Contraction map
Next result shows that $T$ maps a sequence converging to 0 to another sequence converging to 0. And this is a prerequisite for $T$ to be a well defined map on the complete metric space $\ell_0(\mathbb{B}(\delta))$ to itself.
\begin{lemma}%%%%%%LemmaC3
Suppose $x=\{x_k\}_{k\in\mathbb{N}}$ and $\lim_{k\rightarrow\infty}x_k=0$. Then $\lim_{k\rightarrow\infty}(Tx)_{k+1}=0$
\end{lemma}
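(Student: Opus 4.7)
The plan is to decompose $(Tx)_{k+1}$ into its stable part $(Tx)_{k+1}^+$ and its unstable part $(Tx)_{k+1}^-$ and show each tends to $\vec{0}$ separately. Throughout I will use that $\eta(i,\vec{0})=\vec{0}$, so by (\ref{Lipschitz}) with $\vec{y}=\vec{0}$ one has $\norm{\eta(i,x_i)}\le\alpha_i\epsilon\norm{x_i}$ whenever $x_i$ lies in the neighbourhood where the Lipschitz hypothesis is valid; the projections $\eta^{\pm}$ obey the same bound since orthogonal projections are non-expansive. The uniform bounds $K_1,K_2$ from Lemmas \ref{boundedB} and \ref{boundedC} will do the heavy lifting.

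For the unstable component the argument is nearly immediate. Given $\delta>0$, pick $N$ so that $\norm{x_j}<\delta$ for all $j\ge N$; then for $k+1\ge N$ every index appearing in
\[
(Tx)_{k+1}^-=-\sum_{i=0}^{\infty}C(k+1+i,k+1)^{-1}\eta^-(k+1+i,x_{k+1+i})
\]
is already in the tail, and the Lipschitz bound together with Lemma \ref{boundedC} gives $\norm{(Tx)_{k+1}^-}\le\epsilon\delta R_k\le\epsilon\delta K_2$. Since $\delta$ is arbitrary, $(Tx)_{k+1}^-\to\vec 0$.

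The stable component $(Tx)_{k+1}^+=B(k,0)x_0^++\sum_{i=0}^{k}B(k,i+1)\eta^+(i,x_i)$ requires a cut-off argument. The linear term vanishes because $B(k,0)$ is diagonal with entries $\prod_{j=0}^{k}(1-\alpha_j\lambda_l)$ for $\lambda_l>0$, and the assumption $\alpha_k\in\Omega(1/k)$ forces $\sum_j\alpha_j=\infty$, so the estimate used in (\ref{eq:nolog}) yields $\norm{B(k,0)}\to 0$. For the convolution sum I would fix $\delta>0$, choose $M$ so that $\norm{x_i}<\delta$ for $i>M$, and split
\[
\sum_{i=0}^{k}B(k,i+1)\eta^+(i,x_i)=\sum_{i=0}^{M}B(k,i+1)\eta^+(i,x_i)+\sum_{i=M+1}^{k}B(k,i+1)\eta^+(i,x_i).
\]
The tail is bounded by $\epsilon\delta\sum_{i=M+1}^{k}\alpha_i\norm{B(k,i+1)}\le\epsilon\delta K_1$ via Lemma \ref{boundedB}. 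The head is a finite sum of fixed vectors $\eta^+(i,x_i)$ weighted by $B(k,i+1)=B(k,M+1)B(M,i+1)$; for each $i\le M$, the factor $B(M,i+1)$ is independent of $k$, while $\norm{B(k,M+1)}\to 0$ as $k\to\infty$ by the same argument as for $B(k,0)$ (the tail of a divergent series is still divergent). Hence $\limsup_k\norm{(Tx)_{k+1}^+}\le\epsilon\delta K_1$, and sending $\delta\to 0$ closes the argument.

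The main obstacle is precisely the head of the stable convolution: Lemma \ref{boundedB} only furnishes boundedness of $\sum_i\alpha_i\norm{B(k,i+1)}$, not decay to zero, so one cannot simply plug $x_i\to\vec 0$ into the Lipschitz estimate and conclude. The cut-off at $M$ is essential because it separates a finite head (whose smallness comes from the decay of $\norm{B(k,M+1)}$ as $k\to\infty$ for fixed $M$) from an infinite tail (whose smallness comes from $\norm{x_i}<\delta$ together with the uniform bound $K_1$); the two pieces exploit different mechanisms. Once this decomposition is in place, the lemma follows, and consequently $T$ is a well-defined self-map of $\ell_0(\mathbb{B}(\delta))$, setting the stage for the Banach fixed point argument that concludes the proof of Theorem \ref{SMT}.
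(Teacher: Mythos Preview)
Your proof is correct, and the unstable half matches the paper's argument essentially verbatim. Where you diverge is in the treatment of the stable convolution $\sum_{i=0}^{k}B(k,i+1)\eta^+(i,x_i)$. The paper does not split at a cut-off $M$; instead it sets $S_k=\sum_{i=0}^k\norm{B(k,i+1)}\alpha_i\norm{x_i}$, observes the recursion $S_{k+1}=(1-\alpha_{k+1}\lambda)S_k+\alpha_{k+1}\norm{x_{k+1}}$, and then argues by cases (monotone decrease versus oscillation around $\norm{x_k}/\lambda$) that $S_k\to 0$, using $\sum\alpha_k=\infty$ to rule out a positive limit. Your head/tail split is the more standard ``$\epsilon/2$'' manoeuvre: the tail is controlled by Lemma~\ref{boundedB} and $\norm{x_i}<\delta$, the head by the decay $\norm{B(k,M+1)}\to 0$ for fixed $M$. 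This is cleaner and more transparent than the paper's recursive case analysis, and it isolates exactly which two mechanisms are at work; the paper's route, by contrast, stays closer to the discrete-ODE flavour of the surrounding argument and avoids re-deriving the decay of $\norm{B(k,M+1)}$ for a shifted starting index, at the cost of a somewhat fiddly oscillation argument. Either way the conclusion is the same and feeds into Lemma~\ref{contraction} as you note.
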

\begin{proof}
Denote $(Tx)_{k+1}^+$ and $(Tx)_{k+1}^-$ the stable and unstable component of $(Tx)_{k+1}$ respectively. We prove $\lim_{k\rightarrow\infty}(Tx)_{k+1}^+= 0$ and $\lim_{k\rightarrow \infty}(Tx)_{k+1}^-=0$ separately.
\\
\\
1. $\lim_{k\rightarrow\infty}(Tx)_{k+1}^+= 0$: 
\\
According to the definition of $T$ in \ref{def:T}, 
\[
(Tx)_{k+1}^+=B(k,0)x_0^++\sum_{i=0}^kB(k,i+1)\eta^+(i,x_i).
\]
 Since $\norm{B(k,0)}\rightarrow 0$ as $k\rightarrow\infty$, it is enough to show that 
 \[
 \sum_{i=0}^kB(k,i+1)\eta^+(i,x_i)\rightarrow 0
 \]
 as $k\rightarrow\infty$.
 From the Lipschitz condition on $\eta$, we have that
\begin{align}
\norm{\sum_{i=0}^k B(k,i+1)\eta^+(i,x_i(x_0))}&\le\epsilon\sum_{i=0}^k\norm{B(k,i+1)}\cdot\alpha_i\norm{x_i}
\\
&=\epsilon\left((1-\alpha_k\lambda)\dotsm(1-\alpha_{1}\lambda)\alpha_0\norm{x_0}+\dotsm(1-\alpha_{k}\lambda)\alpha_{k-1}\norm{x_{k-1}}+\alpha_{k}\norm{x_{k}}\right).
\end{align}
Denote the sum above as
\[
S_k=(1-\alpha_k\lambda)\dotsm(1-\alpha_{1}\lambda)\alpha_0\norm{x_0}+\dotsm(1-\alpha_{k}\lambda)\alpha_{k-1}\norm{x_{k-1}}+\alpha_{k}\norm{x_{k}}.
\]
Notice that 
\[
S_{k+1}=(1-\alpha_{k+1}\lambda)S_k+\alpha_{k+1}\norm{x_{k+1}},
\]
and then 
\[
S_{k+1}-S_k=\alpha_{k+1}(\norm{x_{k+1}}-\lambda S_k).
\]
 From the proof of Lemma \ref{boundedB}, we know that $S_k$ is bounded, and thus $\abs{S_{k+1}-S_k}\rightarrow 0$ as $k\rightarrow\infty$. Similar to proof of \ref{boundedB}, we have the following observation:
 \begin{enumerate}
 \item If $S_{k+1}-S_k>0$, then $\abs{x_{k+1}}-\lambda S_k>0$, or $S_k<\frac{\abs{x_k}}{\lambda}$;
 \item If $S_{k+1}-S_k<0$, then $\abs{x_{k+1}}-\lambda S_k<0$, or $S_k>\frac{\abs{x_k}}{\lambda}$;
 \item If $S_{k+1}-S_k=0$, then $S_k=\text{constant}$.
 \end{enumerate}  
 So the sequence $S_k$ is either 
 \begin{enumerate}
 \item decreasing but $S_k>\frac{\norm{x_k}}{\lambda}$, 
 \item oscillating around $\frac{\norm{x_k}}{\lambda}$.
 \end{enumerate}
 If $S_k$ is of case 1, then $\lim_kS_k$ exists. Suppose that this limit is positive, but since we have $\sum\alpha_k=\infty$ and
 \[
 S_{k+1}=S_k+\alpha_{k+1}(\norm{x_{k+1}}-\lambda S_k)
 \]
 implying that $S_k\rightarrow\infty$. So we conclude that $\lim_k S_k=0$, contradicting to the fact that $\lim_k S_k$ exists. So the $\lim_k S_k$ must be $0$ if $S_k$ is of case 1.
 \\
 If $S_k$ is of case 2, then immediately $\liminf S_k=0$. Suppose that $\limsup S_k>0$. Since $S_k$ decreases whenever $S_k>\frac{\norm{x_k}}{\lambda}$ and $S_k$ increases whenever $S_k<\frac{\norm{x_k}}{\lambda}$, we can find a subsequence $S_{k_m}$, with $S_{k_m-1}<S_{k_m}$, converging to $\limsup S_k$ as $m\rightarrow\infty$. But this is impossible since $S_{k_m-1}<\frac{\norm{x_k}}{\lambda}$ and then $S_{k_m-1}\rightarrow 0$ as $m\rightarrow\infty$, which means $\lim_m\abs{S_{k_m-1}-S_{k_m}}$ is positive, contradicting to the fact that $\lim_k\abs{S_k-S_{k+1}}=0$. And thus, we have $\limsup_k S_k=0$, meaning that $\lim_k S_k=0$.
 \\
 So we conclude that either in case 1 or 2, the limit $\lim_k S_k=0$, which completes the proof of part 1.
 \\
 \\
 2. $\lim_{k\rightarrow\infty}(Tx)_{k+1}^-=0$: 
 \\
 According to the equation \ref{def:T}, 
 \[
 (Tx)_{k+1}^-=-\sum_{i=0}^{\infty}C(k+1+i,k+1)^{-1}\eta^-(k+1+i,x_{k+1+i}).
 \]
 And from the Lipschitz condition of on $\eta$, we have that
 \begin{align}
\norm{(Tx)_{k+1}^-}&\le \sum_{i=0}^{\infty}\norm{C(k+1+i,k+1)^{-1}}\norm{\eta^{-}(k+1+i,x_{k+1+i})}
\\
&\le\sum_{i=0}^{\infty}\norm{C(k+1+i,k+1)^{-1}}\norm{\eta(k+1+i,x_{k+1+i})}
\\
&\le\sum_{i=0}^{\infty}\norm{C(k+1+i,k+1)^{-1}}\epsilon\alpha_{k+1+i}\norm{x_{k+1+i}}
\\
&\le\sum_{i=0}^{\infty}\norm{C(k+1+i,k+1)^{-1}}\epsilon\alpha_{k+1+i}\sup_{n> k}\norm{x_n}
\\
&\le\sup_{n> k}\abs{x_n}\cdot K_2 \ \ \ \text{(Lemma \ref{boundedC})}
\end{align}
Since $\{x_n\}$ converges to 0 as $n\rightarrow\infty$, $\sup_{n> k}\abs{x_n}\rightarrow 0$ as $k\rightarrow\infty$. And this completes the proof of part 2.
\end{proof}

\begin{lemma}\label{contraction}
There exists a real number $\delta>0$ such that the operator $T$ given by equation \ref{def:T}
\[
T:\ell_0(\mathbb{B}(\delta))\rightarrow\ell_0(\mathbb{B}(\delta))
\]
is a contraction map.
\end{lemma}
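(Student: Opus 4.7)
The plan is to chain the estimate (\ref{eq:bounded}) already derived in the setup with the uniform bounds furnished by Lemmas \ref{boundedB} and \ref{boundedC}. The key structural observation is that, in the Lyapunov-Perron framework, sequences in the domain share a common stable initial component $x_0^+$ (which enters as a fixed parameter), while the unstable component $x_0^-$ is pinned down implicitly by the fixed-point identity (\ref{eq:inftysum}). Under this interpretation, the term $\|B(k,0)\|\,\|u_0^+ - v_0^+\|$ appearing in (\ref{eq:bounded}) vanishes identically.

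Substituting $\sum_{i=0}^k \alpha_i \|B(k,i+1)\| \le K_1$ and $\sum_{i=0}^\infty \alpha_{k+1+i} \|C(k+1+i,k+1)^{-1}\| \le K_2$ into (\ref{eq:bounded}), I obtain
\[
\|(Tu)_{k+1} - (Tv)_{k+1}\| \le \epsilon (K_1 + K_2)\, d(u,v)
\]
uniformly in $k$, hence $d(Tu, Tv) \le \epsilon(K_1 + K_2)\, d(u,v)$. By the Lipschitz hypothesis (\ref{Lipschitz}), the constant $\epsilon$ can be driven arbitrarily small by shrinking the radius $\delta$ of the ambient ball, so choosing $\delta$ so that $\epsilon < 1/(2(K_1+K_2))$ produces a contraction rate strictly below $1/2$. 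For the self-mapping property, I would bound analogously $\|(Tx)_{k+1}^+\| \le \|x_0^+\| + \epsilon \delta K_1$ (using $\|B(k,0)\| \le 1$ once $\alpha_k$ is small) and $\|(Tx)_{k+1}^-\| \le \epsilon \delta K_2$; taking $\|x_0^+\| \le \delta/2$ then forces $\|(Tx)_{k+1}\| \le \delta$ for every $k$, and the preceding lemma already certified that $(Tx)_k \to 0$ whenever $x_k \to 0$, so $Tx \in \ell_0(\mathbb{B}(\delta))$.

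The principal obstacle is conceptual rather than computational: one must interpret $\ell_0(\mathbb{B}(\delta))$ as a space of sequences fibered over the stable initial data $x_0^+$, so that the factor $\|B(k,0)\|$ (which is only bounded by $1$, not contractive uniformly in $k$) does not contaminate the contraction estimate. Once this parametrization is fixed, everything reduces to plugging Lemmas \ref{boundedB} and \ref{boundedC} into the existing inequality chain and shrinking $\delta$ so that the Lipschitz constant $\epsilon$ beats $1/(K_1+K_2)$; the Banach fixed point theorem then delivers the unique fixed-point sequence whose existence was promised in the technical overview.
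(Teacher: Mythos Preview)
Your overall strategy---invoke the uniform bounds $K_1,K_2$ from Lemmas~\ref{boundedB} and~\ref{boundedC}, then shrink $\delta$ so that the Lipschitz constant $\epsilon$ in~(\ref{Lipschitz}) beats $1/(K_1+K_2)$---matches the paper. The genuine difference is how the leading term $\|B(k,0)\|\,\|u_0^+-v_0^+\|$ in~(\ref{eq:bounded}) is handled. You eliminate it by declaring $x_0^+$ a fixed parameter of $T$ (so $u_0^+=v_0^+$), which is the orthodox Lyapunov--Perron setup and is exactly what the subsequent application of the Banach fixed point theorem requires. The paper instead proves contraction on the full space $\ell_0(\mathbb{B}(\delta))$ as literally stated: it keeps the term, bounds $\|B(k,0)\|=\prod_{i=0}^k(1-\alpha_i\lambda)\le 1-\alpha_0\lambda<1$ (with $\lambda$ the least positive eigenvalue of $H$), and then chooses $\epsilon<\alpha_0\lambda/(K_1+K_2)$ so that the combined constant $K:=1-\alpha_0\lambda+\epsilon(K_1+K_2)$ is strictly below $1$. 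Your threshold $\epsilon<1/(2(K_1+K_2))$ is thus more generous than the paper's $\epsilon<\alpha_0\lambda/(K_1+K_2)$, at the cost of restricting the domain to a fiber over $x_0^+$; the paper's version is stated on all of $\ell_0(\mathbb{B}(\delta))$ but pays with a tighter $\epsilon$. You also supply the self-mapping estimate $T(\ell_0(\mathbb{B}(\delta)))\subset\ell_0(\mathbb{B}(\delta))$, which the paper's proof of this lemma does not explicitly carry out (it only checks in the preceding lemma that $Tx\to 0$).
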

\begin{proof}
From Lemma \ref{boundedC} and Lemma \ref{boundedB}, we know that in equation (\ref{eq:bounded}), 

\begin{equation}\label{boundedK1}
\sum_{i=0}^k\alpha_i\norm{B(k,i+1)}\le K_1
\end{equation}
and
\begin{equation}\label{boundedK2}
\sum_{i=0}^{\infty}\alpha_{k+1+i}\norm{C(k+1+i,k+1)^{-1}}\le K_2.
\end{equation}
Since $B(k,0)$ is on the stable subspace and whose norm is calculated by
\[
\norm{B(k,0)}=\prod_{i=0}^k(1-\alpha_i\lambda)
\]
where $\lambda>0$, we have
\[
\norm{B(k,0)}\le\norm{B(0,0)}=1-\alpha_0\lambda<1.
\]
 Then we can choose small positive $\epsilon$ so that 
 \[
 \epsilon<\frac{\alpha_0\lambda}{K_1+K_2}.
 \]
 Define the constant $K$ to be
\begin{equation}\label{contractconst}
K:=1-\alpha_0\lambda+\epsilon\left(K_1+K_2\right),
\end{equation}
and by the choice of $\epsilon$, we know that $K<1$. Let $\delta>0$ be the radius corresponding to $\epsilon$ so that the Lipschitz condition is satisfied.
\\
Combining \ref{eq:bounded}, \ref{boundedK1} and \ref{boundedK2}, we have that
\[
\norm{(Tu-T_v)_{k+1}}\le \left(\norm{B(k,0)}+\epsilon(K_1+K_2)\right)d(u,v)\le Kd(u,v).
\]
Since above $k$ is taken arbitrarily, we conclude that
\[
\norm{Tu-Tv}\le Kd(u,v).
\]
So $T$ is a contraction map. 
\end{proof}
And since $\ell_0(\mathbb{B}(\delta))$ is a complete metric space, according to Banach fixed point theorem, there exists a unique sequence, denoted as $x=\{x_n\}_{n\in\mathbb{N}}$, such that 
\[
Tx=x
\]
with initial condition satisfying 
\begin{equation}\label{eq:initial}
(x_0^+,x_0^-)=(x_0^+,-\sum_{i=0}^\infty C(k+1+i,k+1)^{-1}\eta^{-}\left(k+1+i,x_{k+1+i}\right)).
\end{equation}
If we consider the sequence $x$ as a sequence of functions with the initial condition as the variable, the general term $x_n$ is written as $x_n(x_0)$, then the equation (\ref{eq:initial}) is written as
\[
(x_0^+,x_0^-)=\left(x_0^+,-\sum_{i=0}^\infty C(k+1+i,k+1)^{-1}\eta^{-}\left(k+1+i,x_{k+1+i}(x_0^+,x_0^-)\right)\right).
\]
This means that if the some initial condition $x_0$ goes to 0 through the discrete time process $\{x_n(x_0)\}$, its stable and unstable component must satisfy following relation:
\[
x_0^-=-\sum_{i=0}^\infty C(k+1+i,k+1)^{-1}\eta^{-}\left(k+1+i,x_{k+1+i}(x_0^+,x_0^-)\right).
\]
Denote
\[
\Phi(x_0^+,x_0^-)=-\sum_{i=0}^\infty C(k+1+i,k+1)^{-1}\eta^{-}\left(k+1+i,x_{k+1+i}(x_0^+,x_0^-)\right)
\]
and the equation $x_0^-=\Phi(x_0^+,x_0^-)$ defines an implicit function $x_0^-=\varphi(x_0^+)$ by the uniqueness of Banach fixed point. Next we will show that $\varphi$ is differentiable with respect to $x_0^+$. Since it is enough to show the function $\Phi(x_0^+,x_0^-)$
 is differentiable with respect to $x_0^+$. And it is enough to show that each $x_n(x_0)$, if considered as a function of initial condition $x_0$, is differentiable with respect to $x_0^+$.
\begin{comment}
Denote $\Phi(x_0^+,x_0^-):E^s\oplus E^u\rightarrow E^u$ the function defined by
\[
\Phi(\xi^+,\xi^-):=-\sum_{i=0}^\infty C(k+1+i,k+1)^{-1}\eta^{-}\left(k+1+i,x_{k+1+i}(\xi^+,\xi^-)\right),
\]
where $\xi^+\in E^s$ and $\xi^-\in E^u$.
Clearly, the initial condition $(x_0^+,x_0^-)$ satisfies the equation  
\[
\Phi(\xi^+,\xi^-)-\xi^-=0.
\]
Denote $F(\xi^+,\xi^-)=\Phi(\xi^+,\xi^-)-\xi^-$, and then the Jacobian matrix of $F(\xi^+,\xi^-)$ at $(\xi^+,\xi^-)=(0,0)$ is 
\[
\left(
\frac{\partial F}{\partial\xi^+},\frac{\partial F}{\partial\xi^-}
\right)\bigg\vert_{(0,0)}=\big(0,-I\big).
\]
This implies that $0\in E^u$ is a regular value of $F$ in a small neighborhood of $(0,0)\in E^s\oplus E^u$. So the set of $(\xi^+,\xi^-)$ satisfying 
\[
F(\xi^+,\xi^-)=0
\]
is a manifold with the same dimension of $E^s$.
\end{comment}
%%%%%%%
% lemma of smoothness
%%%%%%%
\begin{lemma}
The solution $x_n(x_0^+,x_0^-)$ is of $C^1$ with respect to $x_0^+$ provided $\eta(n,x)$ is of $C^1$. 
\end{lemma}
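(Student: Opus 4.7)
My plan is to invoke the uniform contraction principle: if a family of contractions $T_p$ on a complete metric space depends $C^1$ on a parameter $p$ with contraction rate uniformly bounded away from $1$, then its fixed point depends $C^1$ on $p$. Here the parameter is $p = x_0^+ \in E^s$, the space is $\ell_0(\mathbb{B}(\delta))$ from Lemma \ref{contraction}, and the parameter $x_0^+$ enters only the stable component of $T$ through the source term $B(k,0) x_0^+$ in \ref{def:T}. Because the contraction constant $K$ of Lemma \ref{contraction} does not depend on $x_0^+$, the fixed point $x(x_0^+) = \{x_n(x_0^+)\}$ is well defined and depends continuously on $x_0^+$ by the standard perturbation argument for contractions.

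To extract a candidate derivative, I would formally differentiate the fixed point equation $x = T(x_0^+, x)$ with respect to $x_0^+$. Writing $\xi_n := \partial x_n / \partial x_0^+$ as a sequence of bounded linear maps $E^s \to \mathbb{R}^d$, the chain rule applied to \ref{def:T} produces a new linear operator $\mathcal{T}$ on the Banach space $\mathcal{L}$ of bounded sequences of such linear maps (with sup norm), whose action is obtained from $T$ by replacing each $\eta(i, x_i)$ with $D_x\eta(i, x_i(x_0^+)) \xi_i$ and keeping the inhomogeneous term $B(k,0)$ in the stable block with $\xi_0^+ = I_{E^s}$, $\xi_0^- = 0$ built in.

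The key point is that the bounds from Lemmas \ref{boundedB} and \ref{boundedC} that made $T$ a contraction also make $\mathcal{T}$ a contraction with the same rate $K$. Indeed, the $C^1$ hypothesis on $\eta$ together with \ref{Lipschitz} implies $\|D_x\eta(k, x)\| \le \alpha_k \epsilon$ on $\mathbb{B}(\delta)$ after possibly shrinking $\delta$, so the operator norms of the partial derivatives satisfy exactly the same estimates as the Lipschitz increments used in Lemma \ref{contraction}. The Banach fixed point theorem on $\mathcal{L}$ therefore furnishes a unique bounded candidate $\{\xi_n\}$.

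The last step is to verify that $\{\xi_n\}$ is actually the Fréchet derivative of $x_n$ with respect to $x_0^+$. I would define the remainder $r_n(h) := x_n(x_0^+ + h) - x_n(x_0^+) - \xi_n h$, subtract the fixed point equations at $x_0^+$ and $x_0^+ + h$, and expand $\eta(i, x_i(x_0^+ + h)) - \eta(i, x_i(x_0^+))$ via the mean value form $D_x\eta(i, x_i(x_0^+))(x_i(x_0^+ + h) - x_i(x_0^+)) + \rho_i$, where $\|\rho_i\| = o(\|x_i(x_0^+ + h) - x_i(x_0^+)\|)$ uniformly in $i$ by the $C^1$ assumption. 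The resulting recursion for $\{r_n(h)\}$ has the same contractive structure as $\mathcal{T}$ with an inhomogeneous term of size $o(\|h\|)$, so a contraction argument gives $\sup_n \|r_n(h)\| = o(\|h\|)$. Continuity of $x_0^+ \mapsto \{\xi_n\}$ follows from the same perturbation argument applied to $\mathcal{T}$, using continuity of $D_x\eta$ and of the fixed point $x_0^+ \mapsto x(x_0^+)$. The main obstacle I anticipate is the two-sided stable/unstable bookkeeping in $\mathcal{T}$, in particular the uniform control of the infinite tail in the unstable block under perturbation of $x_0^+$; but this is exactly what Lemma \ref{boundedC} is designed for, so no new analytic estimate beyond what is already available will be needed.
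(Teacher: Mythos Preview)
Your proposal is correct and follows essentially the same route as the paper: linearize the fixed-point equation for $T$, observe that the linearized operator is a contraction with the same constant $K$ coming from Lemmas~\ref{boundedB} and~\ref{boundedC}, and then show that the difference quotient converges to the fixed point of the linearized problem. The paper carries this out by hand, working coordinatewise with the difference quotient $q(n,x_0,h)=(x_n(x_0+he_j)-x_n(x_0))/h$ and a candidate derivative $\psi$ defined by the linearized integral equation, then bounding $\sup_n\|q-\psi\|$ using the same two estimates; you package the identical argument as an instance of the uniform contraction principle on the Banach space of bounded sequences of linear maps, which is cleaner and avoids the coordinate bookkeeping.
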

\emph{Proof}. It is equivalent to show that $\frac{\partial x_n}{\partial x_{0,j}}$, $j=1,..,d$, where $d$ is the dimension of stable vector space, exist and are continuous for small $\abs{x_0}$.
\\
Let $P^+$ and $P^-$ be the projection operators to the stable and unstable subspaces respectively, then the solution (with initial condition $x_0$) of the dynamical system can be written as
\begin{align}\label{eq:general}
x_{k+1}(x_0)= &A(k,0)P^+x_0+\sum_{i=0}^kA(k,i+1)P^+\eta(i,x_i(x_0))
\\
&-\sum_{i=0}^{\infty}A(k+1+i,k+1)^{-1}P^-\eta(k+1+i,x_{k+1+i}(x_0)).
\end{align}
Let $h$ be a scalar and $e_j$ be the $j$th standard basis. Denote
\[
q(n,x_0,h)=\frac{x_{n}(x_0+he_j)-x_{n}(x_0)}{h}.
\]
Notice the following identity holds:
\begin{align}
\frac{\eta(n,x_n(x_0+he_j))-\eta(n,x_n(x_0))}{h}=&\frac{\eta(n,x_n(x_0+he_j))-\eta(n,x_n(x_0))}{h}
\\
&+D\eta(n,x_n(x_0))q(n,x_0,h)
\\
&-D\eta(n,x_n(x_0))q(n,x_0,h).
\end{align}
Plugging above identity to \ref{eq:general}, we can compute the difference quotient $q(k+1,x_0,h)=\frac{x_{k+1}(x_0+he_j)-x_{k+1}(x_0)}{h}$
\begin{align}
q(k+1,x_0,h)&=A(k,0)P^+\left(\frac{(x_0+he_j)-x_0}{h}\right)
\\
&+\sum_{i=0}^kA(k,i+1)P^+\left(\frac{\eta(i,x_i(x_0+he_j))-\eta(i,x_i(x_0))}{h}\right)
\\
&-\sum_{i=0}^{\infty}A(k+1+i,k+1)^{-1}P^-\left(\frac{\eta(k+1+i,x_{k+1+i}(x_0+he_j))-\eta(k+1+i,x_{k+1+i}(x_0))}{h}\right)
\\
&=A(k,0)P^+e_j+\sum_{i=0}^kA(k,i+1)P^+\left(D\eta(i,x_i(x_0))q(i,x_0,h)+\Delta_i\right)
\\
&-\sum_{i=0}^{\infty}A(k+1+i,k+1)^{-1}P^-\left(D\eta(k+1+i,x_{k+1+i}(x_0))q(k+1+i,x_0,h)+\Delta_{k+1+i}\right),
\end{align}
where 
\[
\Delta_n=\frac{\eta(n,x_n(x_0+he_j))-\eta(n,x_n(x_0))}{h}-D\eta(n,x_n(x_0))q(n,x_0,h).
\]
Since for the solution $x(n,x_0)$, $x(n,x_0)\rightarrow 0$ as $n\rightarrow\infty$,
and $\norm{\eta(n,x)-\eta(n,\bar{x})}\le\epsilon\norm{x-\bar{x}}$, we have that
\[
\norm{D\eta(n,x_n(x_0))}\le\epsilon d.
\]
Given $\delta>0$, $\abs{h}$ can be chosen small that by the mean value theorem and the continuity of $D\eta$, we have 
\begin{align}
\norm{\Delta_n}&\le\frac{1}{h}\norm{D\eta(n,x')}\cdot\norm{x_n(x_0+he_j)-x_n(x_0)}+\norm{D\eta(n,x_n(x_0))}\cdot\norm{q(n,x_0,h)}
\\
&=\left(\norm{D\eta(n,x_n')}+\norm{D\eta(n,x_n(x_0))}\right)\cdot\norm{q(n,x_0,h)}
\\
&\le\delta\norm{q(n,x_0,h)},
\end{align}
where $x_n'$ is a point on the line segment joining $x_n(x_0+he_j)$ and $x_n(x_0)$.
%\begin{align}
%\abs{\abs{D\eta(n,x_n')}-\abs{D\eta(n,x_n(x_0))}}\le\abs{D\eta(n,x_n')-D\eta(n,x_n(x_0))}\le\delta
%\end{align}
Since 
\[
\frac{\norm{\eta(i,x_i(x_0+he_j))-\eta(i,x_i(x_0))}}{\abs{h}}\le\alpha_i\epsilon
\]
so $\norm{q(n,x_0,h)}$ is bounded, denoted as
\[
\norm{q(n,x_0,h)}\le M.
\]
And then $\norm{\Delta_n}\le \delta M$.
Define the operator as
\begin{align}
\psi(k+1,x_0)=&A(k,0)P^+e_j+\sum_{i=0}^kA(k,i+1)P^+D\eta(i,x_i(x_0))\psi(i,x_0)
\\
&-\sum_{i=0}^{\infty}A(k+1+i,k+1)^{-1}P^-D\eta(k+1+i,x_{k+1+i}(x_0))\psi(k+1+i,x_0).
\end{align}
Consider the difference
\begin{align}
&q-\psi
\\
&=\sum_{i=0}^kA(k,i+1)P^+\left(D\eta(i,x_i(x_0))\left(q(i,x_0,h)-\psi(i,x_0)\right)+\Delta_i\right)
\\
&-\sum_{i=0}^{\infty}A(k+1+i,k+1)^{-1}P^-\left(D\eta(k+1+i,x_{k+1+i}(x_0))\left(q(k+1+i,x_0,h)
-\psi(k+1+i,x_0)\right)+\Delta_{k+1+i}\right).
\end{align}
Notice that the part of infinite sum converges to 0 as $k\rightarrow\infty$, one can choose $k$ large enough so that the norm of the infinite sum to be small, and then we have for any small $\epsilon'>0$, the $\sup\norm{q-\psi}$ satisfies
\begin{align}
\sup\norm{q-\psi}&\le\sum_{i=0}^k\norm{A(k,i+1)P^+\left(D\eta(i,x_i(x_0))\left(q(i,x_0,h)-\psi(i,x_0)\right)+\Delta_i\right)}+\epsilon'
\\
&\le K'\epsilon d\sup\norm{q-\psi}+ K''\delta.
\end{align}
Where $K''$ is the bound from that $|\Delta_i|\rightarrow 0$ as $i\rightarrow\infty$.
One choose neighborhood small enough so that $K'\epsilon d<\frac{1}{2}$ and then we have
\[
\sup\norm{q-\psi}<K''\delta.
\]
Since $\delta\rightarrow 0$ as $h\rightarrow 0$, so $\sup \norm{q-\psi}\rightarrow 0$ as $h\rightarrow 0$. And this means that the partial derivative $\frac{\partial x_n}{\partial \xi_i}$ exists and equals to $\psi$. 
\\
In the end we prove that $\bigcap_{k=0}^{\infty}\tilde{g}^{-1}(k,0,U)\subset V(0)$ and this can be done by contradiction. Assume that there is an initial point $x_0$ not in $V(0)$ that generates a sequence $\{x_k\}_{k\in\mathbb{N}}$ such that $x_k\in U$ as $k\rightarrow\infty$. Since $x_{k+1}^-=C_{k+1}x_k^-+\eta^-(k,x_k)$, we have that
\[
\norm{x_{k+1}^-}=\norm{C_{k+1}x_k^-+\eta^-(k,x_k)}\ge\abs{\norm{C_{k+1}x_k^-}-\norm{\eta^-(k,x_k)}}.
\]
Since $\norm{\eta^-(k,x_k)}\rightarrow 0$ as $k\rightarrow\infty$ due to $\alpha_k$, and $\norm{C_{k+1}x_k^-}\rightarrow\infty$ as $k\rightarrow\infty$ by assumption that $x_k^-$ is bounded away from $0$. But this contradicts to the assumption $x_{k+1}^-$ is bounded in $U$. The proof completes.
\end{proof}

\end{document}